\documentclass[11pt]{amsart}
\oddsidemargin 0mm \evensidemargin 0mm \topmargin 0mm \textwidth
160mm \textheight 230mm \tolerance=9999

\usepackage[usenames,dvipsnames]{pstricks}
\usepackage[mathscr]{eucal}
\usepackage{amsfonts,amsmath,amssymb,amsthm,amscd
,amsxtra}
\usepackage{enumerate,verbatim}
\usepackage[all,2cell,ps]{xy}
\usepackage[notcite, notref]{}
\usepackage[pagebackref]{hyperref}
\usepackage{calc,color}
\usepackage{wasysym}
\usepackage{mathptmx}

\theoremstyle{plain} 

\newtheorem{thm}{Theorem}[section]
\newtheorem{dfn}[thm]{Definition}
\newtheorem{fact}[thm]{Fact}

\newtheorem{rmk}[thm]{Remark}

\newtheorem{cor}[thm]{Corollary}
\newtheorem{lem}[thm]{Lemma}
\newtheorem{con}[thm]{Conventions}

\numberwithin{equation}{section}

\newcommand{\fm}{\mathfrak{m}}

\newcommand{\fp}{\mathfrak{p}}
\newcommand{\fq}{\mathfrak{q}}
\newcommand{\fa}{\mathfrak{a}}
\newcommand{\fb}{\mathfrak{b}}
\newcommand{\fc}{\mathfrak{c}}

\newcommand{\Pc}{\mathcal{P}_C}

\def\ac{\operatorname{\mathcal{A}_{\it C}}}
\def\bc{\operatorname{\mathcal{B}_{\it C}}}
\def\gd{\operatorname{\mathsf{G-dim}}}
\def\Gid{\operatorname{\mathsf{Gid}}}

\def\gkd{\operatorname{\mathsf{G}_{\it C}\mathsf{-dim}}}

\def\gc{\operatorname{\mathsf{G}_{\it C}}}

\def\gcpd{\operatorname{\mathsf{G}_{\it C_p}\mathsf{-dim_{R_p}}}}

\def\pd{\operatorname{\mathsf{pd}}}

\def\gr{\operatorname{\mathsf{grade}}}

\def\Tr{\mathsf{Tr}}
\def\trk{\mathsf{Tr}_{C}}

\def\trc{\mathcal{T}^C}

\def\Min{\mathsf{Min}}

\def\depth{\operatorname{\mathsf{depth}}}
\def\Ext{\operatorname{\mathsf{Ext}}}
\def\Hom{\operatorname{\mathsf{Hom}}}
\def\Tor{\operatorname{\mathsf{Tor}}}

\DeclareMathOperator{\sn}{\widetilde{S}}
\DeclareMathOperator{\ann}{Ann} \DeclareMathOperator{\Ass}{Ass}
 
\DeclareMathOperator{\E}{E} \DeclareMathOperator{\hh}{H}
 
\DeclareMathOperator{\cm}{CM}

\DeclareMathOperator{\coker}{Coker}
\DeclareMathOperator{\Ker}{Ker}

\DeclareMathOperator{\im}{Im}
\DeclareMathOperator{\id}{id}
\DeclareMathOperator{\Supp}{Supp} \DeclareMathOperator{\Spec}{Spec}

\def\urltilda{\kern -.15em\lower .7ex\hbox{\~{}}\kern .04em}
\def\urldot{\kern -.10em.\kern -.10em}\def\urlhttp{http\kern -.10em\lower -.1ex
\hbox{:}\kern -.12em\lower 0ex\hbox{/}\kern -.18em\lower
0ex\hbox{/}}

\begin{document}
\baselineskip=15pt

\title[Linkage of modules with respect to a semidualizing module]
 {Linkage of modules with respect to a semidualizing module}

\bibliographystyle{amsplain}

\author[M. T. Dibaei]{Mohammad T. Dibaei$^1$}\footnotetext[1]{Dibaei was supported in part by a grant from IPM (No. 95130110).} 
\author[A. Sadeghi]{Arash Sadeghi$^2$}\footnotetext[2]{Sadeghi was supported by a grant from IPM.}

\address{$^{1}$ Faculty of Mathematical Sciences and Computer,
Kharazmi University, Tehran, Iran.}

\address{$^{1, 2}$ School of Mathematics, Institute for Research in Fundamental Sciences (IPM), P.O. Box: 19395-5746, Tehran, Iran }
\email{dibaeimt@ipm.ir} \email{sadeghiarash61@gmail.com}

\keywords{linkage of modules, Auslander and Bass classes, semidualizing
modules, $\gc$--dimension.}
\subjclass[2010]{13C40, 13D05}

\maketitle

\begin{abstract}
The notion of linkage with respect to a semidualizing module is introduced. It is shown that over a Cohen-Macaulay local ring with canonical module, every Cohen-Macaulay module of finite Gorenstein injective dimension is linked with respect to the canonical module. For a linked module $M$ with respect to a semidualizing module, the connection between the Serre condition $(S_n)$ on $M$ with the vanishing of certain local cohomology modules of its linked module is discussed.
\end{abstract}
\section{Introduction}
The theory of linkage of ideals in commutative algebra has been introduced by Peskine and Szpiro \cite{PS}.
Recall that two ideals $I$ and $J$ in a
Cohen-Macaulay local ring $R$ are said to be linked if there is a
regular sequence $\alpha$ in their intersection such that
$I=(\alpha:J)$ and $J=(\alpha:I)$. One of the main result in the theory of linkage, due to C. Peskine and L. Szpiro, indicates that the Cohen-Macaulayness property is preserved under linkage over Gorenstein local rings.
They also give a counterexample to show that the above result is no longer true if the base ring is Cohen-Macaulay but non-Gorenstein.
Attempts to generalize this theorem has led to several developments in linkage theory, especially by C. Huneke and B. Ulrich (\cite{Hu} and \cite{HuUl}).
In \cite{Sc}, Schenzel used the theory of dualizing complexes to extend the basic properties of linkage to the linkage by Gorenstein ideals.

The classical linkage theory has been extended to modules by Martin \cite{Ma}, Yoshino and Isogawa \cite{YI}, Martsinkovsky and
Strooker \cite{MS}, and by Nagel \cite{N}, in different ways. Based on these generalizations, several works have
been done on studying the linkage theory in the context of modules; see for example \cite{DGHS}, \cite{DS}, \cite{DS1}, \cite{IT}, \cite{Sa} and \cite{CDGST}.
In this paper, we introduce the notion of linkage with respect to a semidualizing module. This is a new notion of linkage for modules and includes the concept of linkage due to Martsinkovsky and Strooker.

To be more precise, let $M$ and $N$ be $R$--modules and let $\alpha$ be an ideal of $R$ which is contained in $\ann_R(M)\cap\ann_R(N)$.
Assume that $K$ is a semidualizing $R/\alpha$--module. We say that $M$ is linked to $N$ with respect to $K$ if
$M\cong\lambda_{R/\alpha}(K,N)$ and $N\cong\lambda_{R/\alpha}(K,M)$, where $$\lambda_{R/\alpha}(K,-):=\Omega_K\Tr_K\Hom_{R/\alpha}(K,-),$$ with $\Omega_K$, $\Tr_K$ are the syzygy and transpose operators, respectively, with respect to $K$. 
This notion enable us to study the theory of linkage for modules in the Bass class with respect to a semidualizing module.
In the first main result of this paper, over a Cohen-Macaulay local ring with canonical module, it is proved that every Cohen-Macaulay module of finite Gorenstein injective dimension is linked with respect to the canonical module (see Theorem \ref{tt}). More precisely,\\

\textbf{Theorem A.}
{\it 
Let $R$ be a Cohen-Macaulay local ring of dimension $d$ with canonical module $\omega_R$. Assume that $\fa$ is a Cohen-Macaulay quasi-Gorenstein ideal of grade $n$ and that
$M$ is a Cohen-Macaulay $R$--module of grade $n$ and of finite Gorenstein injective dimension (equivalently $M\in\mathcal{B}_{\omega_R}$).
If $\fa\subseteq\ann_R(M)$ and $M$ is $\omega_{R/\fa}$-stable, then the following statements hold true.
\begin{itemize}
\item[(i)]{$M$ is linked by ideal $\fa$ with respect to $\omega_{R/\fa}$.}
\item[(ii)]{$\lambda_{R/\fa}(\omega_{R/\fa},M)$ has finite Gorenstein injective dimension.}
\item[(iii)]{$\lambda_{R/\fa}(\omega_{R/\fa},M)$ is Cohen-Macaulay of grade $n$.}\\
\end{itemize}}
Recall that an $R$--module $M$ is called $G$-perfect if $\gr_R(M)=\gd_R(M)$. If $R$ is Cohen-Macaulay then $M$ is $G$-perfect if and only if $M$ is Cohen-Macaulay and $\gd_R(M)<\infty$.
Let us denote the category of $G$-perfect $R$--modules by $\mathcal{X}$, and the category of Cohen-Macaulay $R$-modules of finite Gorenstein injective dimension is by $\mathcal{Y}$. Theorem A, enables us to obtain the following adjoint equivalence (see Theorem \ref{thA} and Theorem \ref{thB}).\\

\textbf{Theorem B.}
{\it 
Let $R$ be a Cohen-Macaulay local ring with canonical module $\omega_R$ and let $\fa$ be a Cohen-Macaulay quasi-Gorenstein ideal,  $\overline{R}=R/\fa$.
There is an adjoint equivalence
\begin{center}

$\left\{\begin{array}{llll}
M\in\mathcal{X} &{\bigg |} \begin{array}{lll} M\mathrm{\ is\ linked\ by}\\  
\mathrm{\ \ \ the\ ideal}\ \fa \\
\end{array} \end{array}
\right\}
\begin{array}{ll}\overset{-\otimes_{\overline{R}}\omega_{\overline{R}}}{\xrightarrow{\hspace*{2cm}}}\\ \underset{\Hom_{\overline{R}}(\omega_{\overline{R}},-)}{\xleftarrow{\hspace*{2cm}}}\\ \end{array}
\left\{\begin{array}{lll}
N\in\mathcal{Y} &{\bigg |} \begin{array}{lll} N\mathrm{\ is\ linked\ by\ the\ ideal }\\ 
\ \ \ \fa\mathrm{\ with\ respect\ to  }\  \omega_{\overline{R}} \end{array}\end{array}
\right\}.$
\end{center}

}
Let $R$ be a Cohen-Macaulay local ring with canonical module $\omega_R$.
For a linked $R$-module $M$, with respect to the canonical module, we study the connection between the Serre condition on $M$ with vanishing of certain local cohomology modules of its linked module. We also establish a duality on local cohomology modules of a linked module which is a generalization of \cite[Theorem 4.1]{Sc} and \cite[Theorem 10]{MS} (see Corollary  \ref{c2} and Corollary \ref{c5}).\\

\textbf{Theorem C.}
{\it 
Let $(R,\fm,k)$ be a Cohen-Macaulay local ring of dimension $d>1$ with canonical module $\omega_R$.
Assume that an $R$--module $M$ is horizontally linked to an $R$--module $N$ with respect to $\omega_R$ and that $M$ has finite Gorenstein injective dimension. Then the following statements hold true.
\begin{itemize}
\item[(i)] { $M$ satisfies $(S_n)$ if and only if $\hh^i_{\fm}(N)=0$ for $d-n<i<d$,}
\item[(ii)]{If $M$ is generalized Cohen-Macaulay then $$\hh^i_{\fm}(\Hom_R(\omega_R,M))\cong\Hom_R(\hh^{d-i}_{\fm}(N),\mathbf{E}_R(k)), \text{ for } 0<i<d.$$ In particular, $N$ is generalized Cohen-Macaulay.}
\end{itemize}}

The organization of the paper is as follows. In Section 2, we collect preliminary notions,
definitions and some known results which will be used in this paper. In Section 3, the precise definition of linkage with respect to a semidualizing is given.
We obtain some necessary conditions for an $R$--module to be linked with respect to a semidualizing (see Theorem \ref{th1}). As a consequence,  we  prove Theorem A and Theorem B in this section. In Section 4, for a linked $R$--module  $M$, with respect to a semidualizing, the relation between the Serre condition $\sn_n$ on $M$ with vanishing of certain relative cohomology modules of its linked module is studied. As a consequence, we prove Theorem C.
\section{Preliminaries}
Throughout the paper, $R$ is a commutative Noetherian semiperfect ring and all
$R$--modules are finitely generated. Note that a commutative ring $R$ is semiperfect if and only if it is
a finite direct product of commutative local rings \cite[Theorem 23.11]{L}. Whenever, $R$ is assumed to be local, its unique maximal ideal
is denoted by $\fm$. The canonical module of $R$ is denoted by $\omega_R$.

Let $M$ be an $R$--module. For a finite projective presentation $P_1\overset{f}{\rightarrow}P_0\rightarrow
M\rightarrow 0$ of $M$, its transpose $\Tr M$ is
defined as $\coker f^*$, where $(-)^* := \Hom_R(-,R)$, which
satisfies in the exact sequence
\begin{equation}\label{1.1}
0\rightarrow M^*\rightarrow P_0^*\overset{f^*}{\rightarrow} P_1^*\rightarrow \Tr
M\rightarrow 0.
\end{equation}
Moreover, $\Tr M$ is unique up to projective equivalence. Thus all
minimal projective presentations of $M$ represent isomorphic
transposes of $M$. The syzygy module $\Omega M$ of $M$ is the kernel of an epimorphism
$P\overset{\alpha}{\rightarrow}M$, where $P$ is a projective
$R$--module which is unique up to projective equivalence. Thus
$\Omega M$ is uniquely determined, up to isomorphism, by a
projective cover of $M$.

Martsinkovsky and Strooker  \cite{MS} generalized the notion of
linkage for modules over non-commutative semiperfect Noetherian
rings (i.e. finitely generated modules over such rings have
projective covers). They introduced the operator
$\lambda:=\Omega\Tr$ and showed that ideals $\fa$ and $\fb$ are
linked by zero ideal if and only if $R/\fa\cong\lambda(R/\fb)$ and
$R/\fb\cong\lambda(R/\fa)$ \cite[Proposition1]{MS}.
\begin{dfn}\label{d1}\cite[Definition 3]{MS}
\emph{Two $R$--modules $M$ and $N$ are said
to be\emph{ horizontally linked} if $M\cong \lambda N$ and
$N\cong\lambda M$. Equivalently, $M$ is
horizontally linked (to $\lambda M$) if and
only if $M\cong\lambda^2 M$.}
\end{dfn}
A \emph{stable} module is a module with no non-zero
projective direct summands. An $R$--module $M$ is called a \emph{syzygy module} if it is
embedded in a projective $R$--module. Let $i$ be a positive integer,
an $R$--module $M$ is said to be an $i$th syzygy if there exists an
exact sequence
$$0\rightarrow M\rightarrow P_{i-1}\rightarrow\cdots\rightarrow P_0$$ with the $P_0,\cdots,P_{i-1}$ are
projective. By convention, every module is a $0$th syzygy.

Here is a characterization of horizontally linked modules.

\begin{thm}\cite[Theorem 2 and Proposition 3]{MS}\label{MS}
An $R$--module $M$ is horizontally
linked if and only if it is stable and $\Ext^1_R(\Tr M,R)=0$,
equivalently $M$ is stable and is a syzygy module.
\end{thm}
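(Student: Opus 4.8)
Since a commutative semiperfect ring is a finite product of local rings and all the notions involved ($\Tr$, $\Omega$, $\lambda$, the vanishing of $\Ext^1_R(\Tr M,R)$, stability, and being a syzygy module) are compatible with finite products, we may assume that $(R,\fm)$ is local. The plan is to rest the argument on two classical facts from Auslander--Bridger stable module theory together with one explicit computation. The first fact is that the sequence
\[
0\rightarrow\Ext^1_R(\Tr M,R)\rightarrow M\xrightarrow{\sigma_M}M^{**}\rightarrow\Ext^2_R(\Tr M,R)\rightarrow 0
\]
is exact, where $\sigma_M$ is the canonical biduality map and $M^{**}=(M^{*})^{*}$; the second is that $\Tr\Tr M$ is isomorphic to $M$ after deleting its free direct summands, so that $M$ is stable if and only if $M\cong\Tr\Tr M$. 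From the first sequence, $\Ext^1_R(\Tr M,R)=\Ker\sigma_M$, so its vanishing is equivalent to $\sigma_M$ being injective, i.e.\ to $M$ being torsionless, which for a finitely generated module means $M$ embeds in a finite free module, in other words that $M$ is a syzygy module. This settles the ``equivalently'' clause, so it remains to prove that $M$ is horizontally linked precisely when $M$ is stable and $\Ext^1_R(\Tr M,R)=0$.

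The heart of the matter is the short exact sequence
\begin{equation}
0\rightarrow\Ext^1_R(\Tr M,R)\rightarrow\Tr\Tr M\rightarrow\lambda^2M\rightarrow 0 \tag{$\dagger$}
\end{equation}
To obtain $(\dagger)$ I would fix a minimal projective resolution $\cdots\rightarrow Q_2\xrightarrow{d_2}Q_1\xrightarrow{d_1}Q_0\rightarrow N\rightarrow 0$ of $N:=\Tr M$. Since $\im d_2=\Ker d_1\subseteq\fm Q_1$, the surjection $Q_1\twoheadrightarrow\im d_1=\Omega N=\lambda M$ is a projective cover, so after truncation $Q_2\xrightarrow{d_2}Q_1\rightarrow\lambda M\rightarrow 0$ is a minimal projective presentation of $\lambda M$, whence $\Tr\lambda M=\coker(d_2^{*}\colon Q_1^{*}\rightarrow Q_2^{*})$. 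The delicate point --- the step I expect to be the main obstacle --- is that the dualized complex still computes transposes and syzygies with no spurious projective summands: minimality of $d_i$ (that is, $\im d_i\subseteq\fm Q_{i-1}$) forces $\im(d_i^{*})\subseteq\fm Q_i^{*}$, since otherwise a map $\varphi\colon Q_{i-1}\rightarrow R$ with $\varphi d_i\colon Q_i\rightarrow R$ a split epimorphism would produce a nonzero free summand of $Q_{i-1}$ contained in $\im d_i\subseteq\fm Q_{i-1}$, which Nakayama forbids. Applying this with $i=2$, the map $Q_2^{*}\twoheadrightarrow\Tr\lambda M$ is a projective cover, so $\lambda^2M=\Omega\Tr\lambda M=\im(d_2^{*})\cong Q_1^{*}/\Ker(d_2^{*})$, whereas $\Tr\Tr M=\coker(d_1^{*})=Q_1^{*}/\im(d_1^{*})$. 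Since $d_1d_2=0$ gives $\im(d_1^{*})\subseteq\Ker(d_2^{*})$ with quotient $\Ker(d_2^{*})/\im(d_1^{*})=\Ext^1_R(N,R)=\Ext^1_R(\Tr M,R)$, the sequence $(\dagger)$ drops out.

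Granting $(\dagger)$ and the two recalled facts, the equivalence is immediate. If $M$ is stable and $\Ext^1_R(\Tr M,R)=0$, then $M\cong\Tr\Tr M$ by the second fact and $\Tr\Tr M\cong\lambda^2M$ by $(\dagger)$, so $M\cong\lambda^2M$, which by Definition~\ref{d1} means that $M$ is horizontally linked. Conversely, if $M$ is horizontally linked then $M\cong\lambda^2M=\Omega(\Tr\lambda M)$ embeds in a projective module, hence is a syzygy module, so $\Ext^1_R(\Tr M,R)=0$ by the first paragraph; then $(\dagger)$ gives $M\cong\Tr\Tr M$, and since $\Tr\Tr M$ is $M$ with its free summands removed, comparing minimal numbers of generators shows that $M$ has no nonzero free summand, i.e.\ $M$ is stable. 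Beyond the minimality bookkeeping behind $(\dagger)$, every step is formal.
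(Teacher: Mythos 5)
Theorem~\ref{MS} is cited by the paper from Martsinkovsky--Strooker \cite[Theorem 2 and Proposition 3]{MS} and is not proved here, so there is no in-paper argument to compare against; your reconstruction is, however, correct and follows the same Auslander--Bridger line as the original source. The two facts you invoke (the four-term biduality sequence and the identification of $\Tr\Tr M$ with the stable part of $M$) are standard, the derivation of $(\dagger)$ from a minimal resolution of $\Tr M$ is sound, and both directions of the equivalence then follow exactly as you state, with the free-summand count disposing of stability in the forward direction. One small simplification: the minimality step $\im d_i\subseteq\fm Q_{i-1}\Rightarrow\im d_i^{*}\subseteq\fm Q_i^{*}$ needs no split-summand contradiction, since for $\varphi\in Q_{i-1}^{*}$ one has $\im(\varphi\circ d_i)\subseteq\varphi(\fm Q_{i-1})\subseteq\fm$, hence $\varphi\circ d_i\in\Hom_R(Q_i,\fm)=\fm Q_i^{*}$ directly.
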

Semidualizing modules are initially studied in \cite{F} and \cite{G}.
\begin{dfn}\label{d3}
\emph{An $R$--module $C$ is called a \emph{semidualizing} module, if the
homothety morphism $R\rightarrow\Hom_R(C,C)$ is an isomorphism and
$\Ext^i_R(C,C)=0$ for all $i>0$.}
\end{dfn}
 It is clear that $R$ itself is a semidualizing
$R$--module. Over a Cohen-Macaulay local ring $R$, a canonical
module $\omega_R$ of $R$, if exists, is a semidualizing module with finite
injective dimension.

\begin{con}\label{con}
\emph{ Throughout $C$ denote a semidualizing $R$--module. We set $(-)^\triangledown=\Hom_R(-,C)$ and $(-)^{\curlyvee}=\Hom_{R}(C,-)$.
	The notation $(-)^*$ stands for the $R$--dual functor $\Hom_R(-,R)$. The canonical module of a Cohen-Macaulay local ring, if exists, is denoted as $\omega_R$, then we set $(-)^{\dagger}=\Hom_R(-,\omega_R)$.
}
\end{con}

Let $P_1\overset{f}{\rightarrow}P_0\rightarrow M\rightarrow 0$ be a
projective presentation of an $R$--module $M$. The transpose of $M$ with respect to
$C$, $\trk M$, is defined to be $\coker f^{\triangledown}$, which satisfies in the exact
sequence
\begin{equation}\tag{\ref{d3}.1}
0\rightarrow M^{\triangledown}\rightarrow
P_0^{\triangledown}\overset{f^{\triangledown}}{\rightarrow} P_1^{\triangledown}\rightarrow \trk
M\rightarrow 0.
\end{equation}
By \cite[Proposition 3.1]{F}, there exists the following exact
sequence
\begin{equation}\tag{\ref{d3}.2}
0\rightarrow\Ext^1_R(\trk M,C)\rightarrow M\rightarrow
M^{\triangledown\triangledown}\rightarrow\Ext^2_R(\trk
M,C)\rightarrow0.
\end{equation}

The Gorenstein dimension has been extended to $\gc$--dimension by
Foxby in \cite{F} and Golod in \cite{G}.
\begin{dfn}
\emph{An $R$--module $M$ is said to have \emph{$\gc$--dimension zero} if $M$ is
$C$-reflexive, i.e. the canonical map $M\rightarrow
M^{\triangledown\triangledown}$ is bijective, and
$\Ext^i_R(M,C)=0=\Ext^i_R(M^{\triangledown},C)$ for all $i>0$.}
\end{dfn}
A $\gc$-resolution of an $R$--module $M$ is a right acyclic complex
of $\gc$-dimension zero modules whose $0$th homology is $M$. The
module $M$ is said to have finite $\gc$-dimension, denoted by
$\gkd_R(M)$, if it has a $\gc$-resolution of finite length.

Note that, over a local ring $R$, a semidualizing $R$--module $C$ is
a canonical module if and only if $\gkd_R(M)<\infty$ for all
finitely generated $R$--modules $M$ (see \cite[Proposition
1.3]{Ge}).

In the following, we summarize some basic facts about
$\gc$-dimension (see \cite{AB} and \cite{G} for more details).
\begin{thm}\label{G3}
For an $R$--module $M$, the
following statements hold true.
\begin{itemize}
       \item[(i)]{$\gkd_R(M)=0$ if and only if $\Ext^i_R(M,C)=0=\Ext^i_R(\trk M,C)$ for all $i>0$;}
       \item[(ii)]{$\gkd_R(M)=0$ if and only if $\gkd_R(\trk M)=0$;}
       \item[(iii)]{If $\gkd_R(M)<\infty$ then $\gkd_R(M)=\sup\{i\mid\Ext^i_R(M,C)\neq0, i\geq0\}$;}
        \item[(iv)]{If $R$ is local and $\gkd_R(M)<\infty$, then $\gkd_R(M)=\depth R-\depth_R(M)$.}
\end{itemize}
\end{thm}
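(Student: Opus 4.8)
The plan is to derive all four statements from the two exact sequences $(\ref{d3}.1)$ and $(\ref{d3}.2)$, the defining properties of the semidualizing module $C$, and two standard tools: dimension-shifting in $\Ext_R(-,C)$ and the Depth Lemma. First I would record the elementary consequences of $R\cong\Hom_R(C,C)$ together with $\Ext^{>0}_R(C,C)=0$: every module in $\operatorname{add}R$ or $\operatorname{add}C$ has $\gc$-dimension zero, $\Ext^{>0}_R(-,C)$ vanishes on such modules, and $(-)^{\triangledown}$ restricts to mutually inverse dualities between $\operatorname{add}R$ and $\operatorname{add}C$, so that $P^{\triangledown\triangledown}\cong P$ for $P$ finite projective and $\depth_R C=\depth R$. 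With this in hand, (i) and (ii) become identities about the transpose, while (iii) and (iv) are the associated Auslander--Bridger-type formulas, which I would prove by induction on $\gkd_R(M)$.

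For (i): fix a projective presentation $P_1\xrightarrow{f}P_0\to M\to 0$ and split $(\ref{d3}.1)$ into the short exact sequences $0\to M^{\triangledown}\to P_0^{\triangledown}\to D\to 0$ and $0\to D\to P_1^{\triangledown}\to\trk M\to 0$, where $D=\im f^{\triangledown}$. Since $\Ext^{>0}_R(P_i^{\triangledown},C)=0$, the long exact sequences yield dimension-shift isomorphisms $\Ext^j_R(M^{\triangledown},C)\cong\Ext^{j+1}_R(D,C)\cong\Ext^{j+2}_R(\trk M,C)$ for all $j\ge 1$; hence the condition $\Ext^{>0}_R(M^{\triangledown},C)=0$ is equivalent to $\Ext^i_R(\trk M,C)=0$ for $i\ge 3$, while $(\ref{d3}.2)$ shows that $M$ is $C$-reflexive if and only if $\Ext^1_R(\trk M,C)=0=\Ext^2_R(\trk M,C)$. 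Substituting these two equivalences into the definition of $\gkd_R(M)=0$ gives exactly the stated condition in (i). For (ii): apply (i) to both $M$ and $\trk M$; it then suffices to see that $\trk\trk M$ and $M$ differ only by a summand in $\operatorname{add}C$, so that $\Ext^{>0}_R(\trk\trk M,C)\cong\Ext^{>0}_R(M,C)$. I would obtain this by choosing minimal presentations and dualizing $(\ref{d3}.1)$ back with $(-)^{\triangledown}$ (using $P_i^{\triangledown\triangledown}\cong P_i$), or equivalently by running the two short exact sequences above through $(-)^{\triangledown}$.

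For (iii) and (iv): induct on $n=\gkd_R(M)<\infty$. For $n=0$, (i) gives $\Ext^{>0}_R(M,C)=0$, so the supremum in (iii) equals $0$ (attained at $i=0$, since $\Hom_R(M,C)\neq 0$ because $M\cong M^{\triangledown\triangledown}$), and (iv) is the standard fact that a module of $\gc$-dimension zero — i.e.\ a totally $C$-reflexive module — has $\depth_R(M)=\depth R$. For $n\ge 1$, take a projective cover $G\twoheadrightarrow M$ and set $K=\Omega M$; a Schanuel comparison against a $\gc$-resolution of $M$ shows $\gkd_R(K)<\infty$, and the dimension-shift isomorphisms $\Ext^i_R(K,C)\cong\Ext^{i+1}_R(M,C)$ for $i\ge 1$ together with the inductive hypothesis give $\gkd_R(K)=n-1$ and hence (iii) for $M$. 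Finally, $\depth_R(K)=\depth R-(n-1)$ and $\depth_R(G)=\depth R$ are distinct, so the Depth Lemma applied to $0\to K\to G\to M\to 0$ forces $\depth_R(M)=\depth_R(K)-1=\depth R-n$, which is (iv).

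The genuinely non-formal ingredients are the double-transpose identification $\trk\trk M\cong M$ modulo $\operatorname{add}C$ used in (ii) and the equality $\depth_R(M)=\depth R$ for totally $C$-reflexive $M$ used in the base step of (iv); granting these, everything else is routine dimension-shifting and the Depth Lemma. All of this is classical — see \cite{AB} for $C=R$ and \cite{F}, \cite{G} in general — so a detailed write-up is unnecessary.
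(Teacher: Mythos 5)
The paper gives no proof of Theorem~\ref{G3}: it is stated as a summary of facts from \cite{AB} and \cite{G}, so there is nothing internal to compare your argument against. Your sketches of (i), (iii) and (iv) are in order --- (i) is the standard dimension-shift in $(-)^{\triangledown}$ applied to (\ref{d3}.1) together with (\ref{d3}.2), and (iii), (iv) are the usual induction on $\gkd$ via syzygies and the depth lemma, once one grants that totally $C$-reflexive modules have depth equal to $\depth R$ and that the supremum formula holds in the base case.

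Part (ii), however, contains a genuine gap. Your key step is the claim that $\trk\trk M$ and $M$ differ by a direct summand in $\operatorname{add}C$, to be obtained ``by dualizing $(\ref{d3}.1)$ back with $(-)^{\triangledown}$.'' But dualizing $P_0^{\triangledown}\to P_1^{\triangledown}\to\trk M\to 0$ by $(-)^{\triangledown}$ returns $0\to(\trk M)^{\triangledown}\to P_1\to P_0\to M^{\triangledown\triangledown}$ (together with some $\Ext^{1,2}_R(\trk M,C)$ terms); it does \emph{not} compute $\trk\trk M$. By Definition, $\trk$ of any module is taken from a \emph{projective} presentation of that module, and $\trk M$ is not naturally presented by projectives --- its natural presentation is by modules in $\operatorname{add}C$. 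So the operation you perform is the $C$-dual of a $\Pc$-presentation, which recovers $M^{\triangledown\triangledown}$, not the second application of $\trk$. In particular, nothing in the sketch shows that $\trk\trk M$ and $M$ agree up to $\operatorname{add}C$-summands, and I do not believe that statement is true in general (via Remark~\ref{remark3}(i), $\trk\trk M\cong\Tr(\Tr M\otimes_R C)\otimes_R C$, whose relation to $M$ is not of that form). What \emph{is} true, and suffices, is weaker: under the hypothesis $\Ext^1_R(\trk M,C)=0=\Ext^2_R(\trk M,C)$ --- which holds in both directions of the claimed equivalence --- one can apply $(-)^{\triangledown}$ to the two short exact sequences from (\ref{d3}.1), compare with (\ref{d3}.2) for $\trk M$, and read off the isomorphisms $\Ext^k_R(\trk\trk M,C)\cong\Ext^k_R(M,C)$ for all $k\geq 1$. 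Feeding that into (i) for both $M$ and $\trk M$ closes the argument. So the conclusion is right, but the intermediate ``stable equivalence modulo $\operatorname{add}C$'' step should be replaced by this Ext comparison.
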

The Gorenstein injective dimension, introduced by Enochs and Jenda \cite{EJ}.
\begin{dfn} \label{tanim}\emph{  (\cite{EJ}; see \cite[6.2.2]{Larsbook}) An $R$-module $M$ is said to be\emph{ Gorenstein injective} if there is an exact sequence
$$I_{\bullet}= \cdots \to I_{1} \stackrel{\rm \partial_{1}}{\longrightarrow} I_{0} \stackrel{\rm \partial_{0}}{\longrightarrow}  I_{-1} \to \cdots$$ of injective $R$-modules such that $M\cong \Ker(\partial_{0})$ and $\Hom_{R}(E,I_{\bullet})$ is exact for any injective $R$-module $E$. The Gorenstein injective dimension of $M$, $\Gid(M)$, is defined as the infimum of $n$ for which there exists an exact sequence as $I_\bullet$ with $M\cong\Ker(I_0\rightarrow I_{-1})$ and $I_i=0$ for all $i<-n$.
The Gorenstein injective dimension is a refinement of the classical injective dimension, $\Gid(M)\leq \id(M)$, with equality if $\id(M)<\infty$; see \cite[6.2.6]{Larsbook}. It follows that every module over a Gorenstein ring has finite Gorenstein injective dimension.}
\end{dfn}

\begin{dfn}\label{def1}
\emph{The \emph{Auslander class
with respect to} $C$, $\mathcal{A}_{C}$, consists of all
$R$--modules $M$ satisfying the following conditions.
\begin{itemize}
\item[(i)] The natural map $\mu:M\longrightarrow\Hom_R(C,M\otimes_RC)$ is an isomorphism.
\item[(ii)] $\Tor_i^R(M,C)=0=\Ext^i_R(C,M\otimes_RC)$ for all $i>0$.
\end{itemize}
Dually, the \emph{Bass class} with respect to $C$, $\bc$, consists
of all $R$--modules $M$ satisfying the following conditions.
\begin{itemize}
\item[(i)] The natural evaluation map $\mu:C\otimes_R\Hom_R(C,M)\longrightarrow M$ is an isomorphism.
\item[(ii)] $\Tor_i^R(\Hom_R(C,M),C)=0=\Ext^i_R(C,M)$ for all $i>0$.
\end{itemize}}
\end{dfn}
In the following we collect some basic properties and examples of
modules in the Auslander class, respectively in the Bass class, with respect to $C$ which will be used in the rest of this paper.
\begin{fact}\label{example1} The following statements hold.
\begin{itemize}
  \item[(i)] If any two $R$-modules in a short exact sequence are
     in $\mathcal{A}_C$, respectively $\bc$, then so is the third one \cite[Lemma 1.3]{F}.
     Hence, every module of finite projective dimension
     is in the Auslander class $\mathcal{A}_C$. Also the class $\bc$, contains all modules of finite injective dimension.
  \item[(ii)] Over a Cohen-Macaulay local ring $R$ with canonical module $\omega_R$,
        $M\in\mathcal{A}_{\omega_R}$ if and only if $\gd_R(M)<\infty$
        \cite[Theorem 1]{F1}. Similarly, $M\in\mathcal{B}_{\omega_R}$ if and only if $\Gid_R(M)<\infty$ \cite[Theorem 4.4]{CFH}.
   \item[(iii)] The $\mathcal{P}_C$-projective dimension of $M$, denoted
    $\mathcal{P}_{C}$-$\pd_R(M)$, is less than or equal to $n$ if and
           only if there is an exact sequence
            $$0\rightarrow P_n\otimes_RC\rightarrow\cdots\rightarrow P_0\otimes_RC\rightarrow M\rightarrow0,$$
             such that each $P_i$ is a projective $R$--module \cite[Corollary 2.10]{TW}.
              Note that if $M$ has a finite
              $\mathcal{P}_C$-projective
              dimension, then $M\in\mathcal{B}_C$
               \cite[Corollary 2.9]{TW}.
    \item[(iv)] $M\in\ac$ if and only if $M\otimes_RC\in\bc$. Similarly, $M\in\bc$ if and only if $M^{\curlyvee}\in\ac$ \cite[Theorem 2.8]{TW}.
   
\end{itemize}
\end{fact}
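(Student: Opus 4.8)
The plan is to prove the four items separately, since they are of different flavours: (i), (iii) and (iv) are formal consequences of the definitions together with the long exact sequences of $\Ext$, $\Tor$ and the five lemma, whereas (ii) imports a genuine fact about complete resolutions and is where I expect the real work to sit.

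For (i) I would first record that $M\in\mathcal{A}_C$ is equivalent to the conjunction that $M\tensor_R C$ has bounded homology and that the natural morphism $M\to\R\Hom_R(C,M\tensor_R C)$ is an isomorphism in the derived category; dually, $M\in\mathcal{B}_C$ is controlled by the evaluation morphism $C\tensor_R\R\Hom_R(C,M)\to M$. Given a short exact sequence $0\to M'\to M\to M''\to0$, applying $-\tensor_R C$ gives an exact triangle, so boundedness of homology for any two of the three forces it for the third; applying $\R\Hom_R(C,-)$ to that triangle and using the five lemma on the resulting morphism of triangles then transports the roundtrip isomorphism to the third module. At the level of modules this unwinds to the long exact sequences in $\Tor^R_\ast(-,C)$ and $\Ext^\ast_R(C,-\tensor_R C)$ together with a diagram chase on the ladder of the canonical maps $\mu$, carried out in the order that first produces $\Tor^R_{\ge1}(M'',C)=0$ so that $-\tensor_R C$ stays exact on the sequence; this is \cite[Lemma 1.3]{F}. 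The ``Hence'' in (i) is then immediate from $R\in\mathcal{A}_C$ and the fact that every injective $R$-module lies in $\mathcal{B}_C$ (a direct Matlis-duality-type verification), combined with two-out-of-three applied along a finite projective resolution, respectively a finite injective coresolution.

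Next I would dispatch (iv) and then (iii), both formal. For (iv): if $M\in\mathcal{A}_C$ then $\Tor^R_{\ge1}(M,C)=0$, so $M\tensor_R C\cong M\otimes_R C$; applying $C\otimes_R-$ to $\mu_M\colon M\xrightarrow{\sim}\Hom_R(C,M\otimes_R C)$ and composing with evaluation gives, by the adjunction triangle identities, that $C\otimes_R\Hom_R(C,M\otimes_R C)\to M\otimes_R C$ is an isomorphism, while the remaining $\Ext$/$\Tor$ vanishing for $M\otimes_R C$ is read off directly; the converse reverses this, and the second half of (iv) is the mirror statement, so together they record the equivalence $\mathcal{A}_C\simeq\mathcal{B}_C$ of \cite[Theorem 2.8]{TW}. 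Given (iv), item (iii) follows by induction on $n$: in $0\to P_n\otimes_R C\to\cdots\to P_0\otimes_R C\to M\to0$ each $P_i\otimes_R C$ lies in $\mathcal{B}_C$ because $C\in\mathcal{B}_C$ and $\mathcal{B}_C$ is closed under finite direct sums and direct summands (its defining conditions being additive), so two-out-of-three from (i) pushes membership through the resolution to $M$; the ``only if'' direction of (iii) produces such a resolution by applying $-\otimes_R C$ to a projective resolution of $\Hom_R(C,M)$, as in \cite[Corollary 2.10]{TW}, and finite $\Pc$-projective dimension thus forces $M\in\mathcal{B}_C$.

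The substantive item is (ii). Over a Cohen-Macaulay local ring with canonical module $\omega_R$, and granting (i), it remains only to show that every module of Gorenstein dimension zero lies in $\mathcal{A}_{\omega_R}$ and, dually, that every Gorenstein injective module lies in $\mathcal{B}_{\omega_R}$; a finite resolution by modules of Gorenstein dimension zero (resp.\ a finite coresolution by Gorenstein injective modules) together with (i) then raises this to all $M$ with $\gd_R M<\infty$ (resp.\ $\Gid_R M<\infty$), and the converse implications come from Foxby's identification of the Auslander class with the class of modules of finite Gorenstein dimension. I would obtain the two base cases from the machinery of complete resolutions and Tate (co)homology, using that $\omega_R$ is maximal Cohen-Macaulay and self-dual under $\Hom_R(-,\omega_R)$ on maximal Cohen-Macaulay modules: concretely, a dimension-shift along a complete resolution establishes the roundtrip isomorphism $G\to\R\Hom_R(\omega_R,G\tensor_R\omega_R)$ for totally reflexive $G$, since it is trivial for free modules, and the injective dual argument handles Gorenstein injective modules. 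This step is the main obstacle, and it is exactly the content drawn from \cite[Theorem 1]{F1} and \cite[Theorem 4.4]{CFH}; everything else in Fact \ref{example1} is bookkeeping with the adjunction between $-\otimes_R C$ and $\Hom_R(C,-)$.
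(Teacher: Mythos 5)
Fact \ref{example1} carries no proof in the paper: it is a catalogue of background results, each item supported by an external citation (\cite{F}, \cite{F1}, \cite{CFH}, \cite{TW}), so there is no internal argument to compare your proposal against. What you have written is a sketch of how those cited results are themselves established, and in outline it is reasonable, but two points deserve flagging.

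In item (i), the hardest two-of-three case -- $M'$ and $M$ in $\mathcal{A}_C$, conclude $M''\in\mathcal{A}_C$ -- is not closed by the remark about ``the order that first produces $\Tor^R_{\ge1}(M'',C)=0$.'' The long exact sequence gives $\Tor^R_j(M'',C)=0$ for $j\ge2$ for free, but $\Tor^R_1(M'',C)$ sits in $0\to\Tor^R_1(M'',C)\to M'\otimes_RC\to M\otimes_RC$, and its vanishing is not a formal consequence of the $\mu$-ladder alone. One standard route: since $M'\otimes_RC\in\mathcal{B}_C$, the functor $\Hom_R(C,-)$ is exact on that short sequence, the induced map $\Hom_R(C,M'\otimes_RC)\to\Hom_R(C,M\otimes_RC)$ is identified via $\mu_{M'},\mu_M$ with $M'\hookrightarrow M$, hence $\Hom_R(C,\Tor^R_1(M'',C))=0$, and one then uses faithfulness of $\Hom_R(C,-)$ on finitely generated modules (every prime lies in $\Supp_R(C)$) to conclude. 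Your sketch glosses over exactly this step.

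In item (ii), the converse implication as you phrase it (``the converse implications come from Foxby's identification of the Auslander class with the class of modules of finite Gorenstein dimension'') is the statement being proved, so it is circular as a proof step. You are right to identify this as the substantive content genuinely imported from \cite[Theorem 1]{F1} and \cite[Theorem 4.4]{CFH}; it is not reproved in your proposal, nor in the paper.
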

\begin{dfn}\label{def2}\emph{
Let $M$ and $N$ be $R$--modules. Denote by
$\mathcal{\beta}(M, N)$ the set of $R$--homomorphisms of $M$ to $N$
which pass through projective modules. That is, an $R$-homomorphism
$f:M\rightarrow N$ lies in $\mathcal{\beta}(M, N)$ if and only if it
is factored as $M\rightarrow P\rightarrow N$ with $P$ is projective. We
denote the stable homomorphisms from $M$ to $N$ as the quotient module 
$$\underline{\Hom}_R(M,N)=\Hom_R(M,N)/\mathcal{\beta}(M,N).$$
By \cite[Lemma 3.9]{y}, there is a natural isomorphism
\begin{equation}\tag{\ref{def2}.1}
\underline{\Hom}_R(M,N)\cong\Tor_1^R(\Tr M,N).
\end{equation}}
\end{dfn}
The class of $C$--projective modules is defined as 
$$\Pc=\{P\otimes_RC\mid P \text{ is projective}\}.$$
Two $R$--modules $M$ and $N$ are
said to be {\it stably equivalent} with respect to $C$, denoted by $M
\underset{C}{\approx} N$, if $C_1\oplus M\cong C_2\oplus N$ for some $C$--projective
modules $C_1$ and $C_2$. We write $M \approx N$ when $M$
and $N$ are stably equivalent with respect to $R$. An $R$--module $M$ is called {\it $C$--stable} if $M$ does not have a direct summand isomorphic to a $C$--projective module. An $R$--module $M$ is called a $C$-\emph{syzygy module} if it is
embedded in a $C$--projective $R$--module. 

\begin{rmk}\label{remark3} Let $M$ be an $R$--module.
\begin{itemize} 
\item[(i)] Let $P_1\overset{f}{\rightarrow} P_0\rightarrow M\rightarrow0$ be the
minimal projective presentation of $M$. Then $\Tr M\underset{R}{\otimes}C\cong\trk M$ \emph{(see \cite[Remark 2.1(i)]{DS1})}.
\item[(ii)] Note that, by \cite[Proposition 3(a)]{Mar}, $(P_1)^*\rightarrow\Tr M\rightarrow0$ is minimal.
 Therefore, by (i), we get the following exact sequence
 $$0\longrightarrow\Omega_C\trk M\longrightarrow (P_1)^*\otimes_RC\longrightarrow\trk M\longrightarrow0,$$
 where $\Omega_C\trk M:=\im f^{\triangledown}$.
\item[(iii)] It follows, by (\ref{def2}.1), that if $\underline{\Hom}_R(M,C)=0$ then $\Omega_{C}\trk M\cong\lambda M\otimes_R C$.
\end{itemize}
\end{rmk}
\begin{dfn}\cite{M1}\label{S}\emph{
An $R$--module $M$ is said to satisfy the property $\sn_k$
if $\depth_{R_\fp} (M_\fp) \geq  \min\{ k, \depth R_\fp\}$  for all
$\fp\in\Spec R$.}
\end{dfn}
Note that, for a horizontally linked module $M$ over a
Cohen-Macaulay local ring $R$, the properties $\sn_k$ and
$(S_k)$ are identical.
\section{Horizontal linkage with respect to a semidualizing}
In this section $C$ stands for a semidualizing $R$--module and $M$ is an $R$--module.  Set $(M)^\curlyvee:=\Hom_R(C,M)$ as in Convention \ref{con}. In order to develop the notion of linkage with respect to $C$, we give the following definition.
\begin{dfn} \emph{The {\it linkage of $M$ with respect to $C$,} is defined as the module $\lambda_R(C,M):=\Omega_C\trk(M^\curlyvee)$.
The module $M$ is said to be horizontally linked to an $R$--module $N$ with respect to $C$ if $\lambda_R(C, M)\cong N$ and
$\lambda_R(C,N)\cong M$. Equivalently, $M$ is
horizontally linked (to $\lambda_R(C,M)$) with respect to $C$ if and
only if $M\cong\lambda_R^2(C,M)(=\lambda_R(C,\lambda_R(C,M)))$. In this situation $M$ is called a {\it horizontally linked module with respect to $C$}.}
\end{dfn}
Assume that $P_1\overset{f}{\rightarrow} P_0\rightarrow M^{\curlyvee}\rightarrow0$ is the minimal projective presentation of $M^\curlyvee$.
By Remark \ref{remark3}, $\lambda_R(C,M)=\im(f^\triangledown)$ and we obtain the exact sequence
\begin{equation}\label{e}
0\longrightarrow\lambda_R(C,M)\longrightarrow (P_1)^*\otimes_RC\longrightarrow\trk(M^\curlyvee)\longrightarrow0.
\end{equation}
Therefore $\lambda_R(C,M)$ is unique, up to isomorphism.
Having defined the horizontal linkage with respect to a semidualizing module $C$, the general linkage for modules is defined as follows.
\begin{dfn}
\emph{Let $\fa$ be an ideal of $R$ and let $K$ be a semidualizing $R/\fa$-module.
An $R$--module $M$ is said to be \emph{linked} to an $R$--module $N$ by the ideal $\fa$, with respect to $K$, if $\fa\subseteq\ann_R(M)\cap\ann_R(N)$ and $M$ and
$N$ are horizontally
linked with respect to $K$ as $R/{\fa}$--modules. In this situation we denote $M\underset{\fa}{\overset{K}{\thicksim}}N$.} 
\end{dfn}

\begin{lem}\label{lem3}
Assume that an $R$--module $M$ satisfies the
following conditions. 
\begin{itemize}
\item[(i)]  $M$ is a $C$-stable and $C$-syzygy.
\item[(ii)] $\underline{\Hom}_R(M^\curlyvee,C)=0=\underline{\Hom}_R(\lambda(M^\curlyvee),C)$.
\item[(iii)] $M\cong C\otimes_R M^{\curlyvee}$ and $\lambda(M^{\curlyvee})\cong\Hom_R(C,C\otimes_R\lambda(M^{\curlyvee}))$.
\end{itemize}
Then $M$ is a horizontally linked $R$--module with respect to $C$.
\end{lem}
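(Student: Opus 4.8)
The goal is to show $M$ is horizontally linked with respect to $C$, i.e. $M \cong \lambda_R^2(C,M) = \lambda_R(C,\lambda_R(C,M))$. The plan is to translate the problem into a statement about ordinary horizontal linkage of the module $M^\curlyvee$ over $R$, apply Theorem \ref{MS}, and then transport back along the equivalence between $\mathcal{A}_C$ and $\mathcal{B}_C$ furnished by the functors $C \otimes_R -$ and $\Hom_R(C,-)$ (Fact \ref{example1}(iv)).

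First I would unwind the definition: by Remark \ref{remark3}(iii), the hypothesis $\underline{\Hom}_R(M^\curlyvee,C)=0$ gives $\lambda_R(C,M) = \Omega_C\trk(M^\curlyvee) \cong \lambda(M^\curlyvee)\otimes_R C$. So the claim $M \cong \lambda_R^2(C,M)$ becomes: $M \cong \Omega_C\trk\big((\lambda(M^\curlyvee)\otimes_R C)^\curlyvee\big)$. Using condition (iii), $\lambda(M^\curlyvee) \cong \Hom_R(C, C\otimes_R \lambda(M^\curlyvee)) = (\lambda(M^\curlyvee)\otimes_R C)^\curlyvee$, so the right-hand side simplifies — again via Remark \ref{remark3}(iii), now using $\underline{\Hom}_R(\lambda(M^\curlyvee),C)=0$ — to $\lambda^2(M^\curlyvee)\otimes_R C$. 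Thus it suffices to prove $M \cong \lambda^2(M^\curlyvee)\otimes_R C$. Since $M \cong C\otimes_R M^\curlyvee$ by (iii), and tensoring with $C$ is (by the hypotheses placing the relevant modules in $\mathcal{A}_C$) faithful enough on the modules in play, it is enough to show $M^\curlyvee \cong \lambda^2(M^\curlyvee)$, i.e. that the $R$-module $M^\curlyvee$ is horizontally linked in the classical sense.

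To prove $M^\curlyvee$ is horizontally linked, I would invoke Theorem \ref{MS}: I must check $M^\curlyvee$ is stable and is a syzygy module (equivalently $\Ext^1_R(\Tr(M^\curlyvee),R)=0$). For the syzygy condition: by (i), $M$ is a $C$-syzygy, so $M$ embeds in some $P\otimes_R C$ with $P$ projective; applying $\Hom_R(C,-)$ and using that $\Hom_R(C,P\otimes_R C)\cong P$ (since $P$ is in $\mathcal{A}_C$, as modules of finite projective dimension lie in $\mathcal{A}_C$ by Fact \ref{example1}(i)), together with left-exactness of $\Hom_R(C,-)$, yields an embedding $M^\curlyvee \hookrightarrow P$, so $M^\curlyvee$ is a syzygy module. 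For stability: one shows $M^\curlyvee$ has no nonzero projective summand because a projective summand of $M^\curlyvee$ would, after tensoring with $C$ and using $M\cong C\otimes_R M^\curlyvee$, produce a $C$-projective summand of $M$, contradicting that $M$ is $C$-stable by (i); here one needs that $\Hom_R(C,-)$ and $C\otimes_R-$ are mutually inverse on the relevant classes, which is exactly Fact \ref{example1}(iv) combined with the $\mathcal{A}_C$-membership of projectives.

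The main obstacle I anticipate is the bookkeeping around when the functors $C\otimes_R-$ and $\Hom_R(C,-)$ are actually inverse to each other on the particular modules $M$, $M^\curlyvee$, $\lambda(M^\curlyvee)$, and $\lambda^2(M^\curlyvee)$ — condition (iii) supplies two of the needed unit/counit isomorphisms directly, but one must verify that $M^\curlyvee$ and $\lambda(M^\curlyvee)$ land in $\mathcal{A}_C$ (so that the round trip $\Hom_R(C, C\otimes_R -)$ is the identity on them and the relevant $\Tor$ and $\Ext$ groups vanish), and that passing a syzygy or transpose through $C\otimes_R-$ behaves well — this is where Remark \ref{remark3} and Fact \ref{example1}(iv) must be applied carefully, and where the hypotheses $\underline{\Hom}_R(M^\curlyvee,C)=0=\underline{\Hom}_R(\lambda(M^\curlyvee),C)$ are doing the real work of making $\Omega_C\trk$ and $\lambda\otimes_R C$ agree. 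Once those identifications are in place, the reduction to the classical horizontal linkage of $M^\curlyvee$ and the application of Theorem \ref{MS} are routine.
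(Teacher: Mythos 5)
Your proposal is correct and takes essentially the same approach as the paper: reduce the problem to showing that $M^\curlyvee$ is horizontally linked in the ordinary sense via Theorem \ref{MS} (checking stability from $C$-stability of $M$ and condition (iii), and the syzygy condition by applying $\Hom_R(C,-)$ to an embedding $M\hookrightarrow P\otimes_RC$), then chain the isomorphisms from condition (iii) and Remark \ref{remark3}(iii) (using condition (ii) to make $\Omega_C\trk$ agree with $\lambda(-)\otimes_RC$) to conclude $M\cong\lambda_R^2(C,M)$.
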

\begin{proof}
As $M$ is $C$-stable, by (iii), $M^\curlyvee$ is stable. By (i), we have the exact sequence $0\rightarrow M\rightarrow P\otimes_RC$ for some projective $R$--module $P$.
By applying the functor $(-)^\curlyvee$ to the above exact sequence, it is easy to see that $M^\curlyvee$ is a first syzygy.
It follows from Theorem \ref{MS} that $M^\curlyvee$ is horizontally linked. In other words, $M^\curlyvee\cong\lambda^2(M^\curlyvee)$. Therefore, we
obtain the following isomorphisms.
\[\begin{array}{rl}
M\cong C\otimes_RM^\curlyvee&\cong
C\otimes_R\lambda^2(M^\curlyvee)\\
&\cong\Omega_C\trk(\lambda(M^\curlyvee))\\
&\cong\Omega_C\trk\Hom_R(C,C\otimes_R\lambda(M^\curlyvee))\\
&\cong\lambda_R(C,\lambda_R(C,M)),
\end{array}\]
by Remark \ref{remark3}(iii) and our assumptions.
\end{proof}
For an integer $n$, set $X^n(R):=\{\fp\in\Spec(R)\mid \depth R_\fp\leq n\}$.
\begin{lem}\label{lem1}
Let $M$ be an $R$--module. Consider the natural map $\mu: M\rightarrow\Hom_R(C,M\otimes_RC)$.
Then the following statements hold true.
\begin{itemize}
\item[(i)] If $M$ satisfies $\sn_1$ and $\mu_\fp$ is a monomorphism for all $\fp\in X^0(R)$, then $\mu$ is a monomorphism.
\item[(ii)]If $M$ satisfies $\sn_2$, $M\otimes_RC$ satisfies $\sn_1$ and $\mu_\fp$ is an isomorphism for all $\fp\in X^1(R)$, then $\mu$ is an isomorphism.
\end{itemize}
\end{lem}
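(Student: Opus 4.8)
The plan is to control the kernel and the cokernel of $\mu$ through their associated primes, using two standard tools: the identity $\Ass_R\Hom_R(C,Y)=\Supp_R C\cap\Ass_R Y=\Ass_R Y$, valid for every $R$--module $Y$ because $C$ is faithful (the homothety isomorphism $R\cong\Hom_R(C,C)$ forces $\ann_R C=0$, so $\Supp_R C=\Spec R$), and the depth lemma for a short exact sequence $0\to A\to B\to D\to 0$, namely $\depth D\ge\min\{\depth B,\,\depth A-1\}$. Note also $\Supp_R\Hom_R(C,M\otimes_R C)=\Supp_R M$, since $(M\otimes_R C)_\fp=0$ whenever $M_\fp=0$.

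For (i), put $X=\Ker\mu\subseteq M$. Localizing, $X_\fp=\Ker(\mu_\fp)$, so the hypothesis gives $X_\fp=0$ for every $\fp\in X^0(R)$; thus $\Supp_R X\cap X^0(R)=\emptyset$. On the other hand, if $\fp\in\Ass_R M$ then $\depth M_\fp=0$, which by $\sn_1$ forces $\min\{1,\depth R_\fp\}=0$, i.e. $\fp\in X^0(R)$; hence $\Ass_R X\subseteq\Ass_R M\subseteq X^0(R)$. Since always $\Ass_R X\subseteq\Supp_R X$, we get $\Ass_R X\subseteq\Supp_R X\cap X^0(R)=\emptyset$, so $X=0$ and $\mu$ is a monomorphism.

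For (ii), observe that $\sn_2$ implies $\sn_1$ and $X^0(R)\subseteq X^1(R)$, so part (i) applies and $\mu$ is a monomorphism; write $L=\coker\mu$ and consider $0\to M\xrightarrow{\mu}\Hom_R(C,M\otimes_R C)\to L\to 0$. For $\fp\in X^1(R)$ the hypothesis gives $L_\fp=0$, so every $\fp\in\Supp_R L$ satisfies $\depth R_\fp\ge 2$. Fix such a $\fp$; then $\fp\in\Supp_R M$ (as $M_\fp=0$ would force $L_\fp=0$), so $\sn_2$ gives $\depth M_\fp\ge 2$, while $\sn_1$ for $M\otimes_R C$ gives $\depth (M\otimes_R C)_\fp\ge 1$, whence $\depth\Hom_{R_\fp}(C_\fp,(M\otimes_R C)_\fp)\ge 1$ by the $\Ass$ identity above. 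The depth lemma applied to the localized sequence then gives $\depth L_\fp\ge\min\{\depth M_\fp-1,\,\depth\Hom_{R_\fp}(C_\fp,(M\otimes_R C)_\fp)\}\ge 1$. Thus $\depth L_\fp\ge 1$ for every $\fp\in\Supp_R L$, so $\Ass_R L=\emptyset$, $L=0$, and $\mu$ is an isomorphism.

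There is no genuine obstacle here: the whole argument is a Serre-condition/depth chase once the faithfulness of $C$ and the three-term depth inequality are in hand. The only point that requires care is the bookkeeping of the support containments $\Supp_R X\subseteq\Supp_R M$ and $\Supp_R L\subseteq\Supp_R M$, which is what licenses invoking the hypotheses $\sn_1$ and $\sn_2$ on $M$ at the relevant primes.
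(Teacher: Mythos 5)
Your proposal is correct and follows essentially the same approach as the paper's proof: chase depths and associated primes through the localized short exact sequence, using the inclusion $\Ass_R\Hom_R(C,-)\subseteq\Ass_R(-)$ (which holds since $C$ is faithful) together with the Serre conditions on $M$ and $M\otimes_R C$. The only cosmetic difference is in part (ii), where the paper splits into cases according to whether $\depth_{R_\fp}\Hom_R(C,M\otimes_RC)_\fp$ is zero or positive, while you fold both cases into one application of the three-term depth inequality to conclude $\depth_{R_\fp}L_\fp\geq 1$ directly.
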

\begin{proof}
(i)  Set $L=\Ker(\mu)$ and let $\fp\in\Ass_R(L)$. Therefore, $\depth_{R_\fp}(M_\fp)=0$. As $M$ satisfies $\sn_1$, $\fp\in X^0(R)$ and so $L_\fp=0$, which is a contradiction.
Therefore, $\mu$ is a monomorphism.

(ii) By (i), $\mu$ is a monomorphism. Consider the following exact sequence:
$$0\longrightarrow M\overset{\mu}{\longrightarrow}\Hom_R(C,M\otimes_RC)\longrightarrow L'\longrightarrow0,$$
where $L':=\coker(\mu)$. Let $\fp\in\Ass_R(L')$. If $\fp\in\Ass_R(\Hom_R(C,M\otimes_RC))\subseteq\Ass_R(M\otimes_RC)$, then $\depth_{R_\fp}(M\otimes_RC)_\fp=0$. As $M\otimes_RC$ satisfies $\sn_1$, one obtains $\fp\in X^0(R)$ which is a contradiction, because  $\mu_\fp$ is an isomorphism for all $\fp\in X^0(R)$. Now let $\depth_{R_\fp}(\Hom_R(C,M\otimes_RC)_\fp)>0$.
It follows easily from the above exact sequence that $\depth_{R_\fp}(M_\fp)=1$. As $M_\fp$ satisfies $\sn_2$, $\fp\in X^1(R)$ which is a contradiction because  $\mu_\fp$ is an isomorphism for all $\fp\in X^1(R)$. Therefore $L'=0$ and $\mu$ is an isomorphism.
\end{proof}
The proof of the following lemma is dual to the proof of \cite[Lemma 2.11]{DS1}.
\begin{lem}\label{lem2}
	Let $R$ be a local ring, $n\geq 0$ an integer, and $M$ an
	$R$--module. If $M\in\mathcal{B}_C$, then the following statements hold true.
	\begin{itemize}
		\item[(i)]  $\depth_R(M)=\depth_R(M^\curlyvee)$ and $\dim_R(M)=\dim_R(M^\curlyvee)$.
		\item[(ii)] $M$ satisfies $\sn_n$ if and only if $M^\curlyvee$ does.
         \item[(iii)] $M$ is Cohen-Macaulay if and only if $M^\curlyvee$ is Cohen-Macaulay.
     	\end{itemize}
\end{lem}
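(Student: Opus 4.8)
The plan is to carry out, in dual form, the argument of \cite[Lemma 2.11]{DS1}: there one works with $M\in\ac$ and the functor $-\otimes_RC$, whereas here one works with $M\in\mathcal{B}_C$ and the functor $(-)^\curlyvee=\Hom_R(C,-)$. We may assume $M\neq0$, so $M^\curlyvee\neq0$ as well. The only consequences of the hypothesis that will actually be used are those packaged in Definition \ref{def1} and Fact \ref{example1}(iv): for $M\in\mathcal{B}_C$ one has the evaluation isomorphism $C\otimes_RM^\curlyvee\cong M$, the membership $M^\curlyvee\in\ac$, and the vanishing $\Ext^i_R(C,M)=0$ for all $i>0$.

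First I would dispose of the support equality $\Supp_RM=\Supp_RM^\curlyvee$: the inclusion $\Supp_R\Hom_R(C,M)\subseteq\Supp_RM$ is automatic, while $M\cong C\otimes_RM^\curlyvee$ gives the reverse inclusion. This yields $\dim_RM=\dim_RM^\curlyvee$ at once, and it also shows that for every prime $\fp$ outside this common support both $\sn_n$-conditions in (ii) hold vacuously at $\fp$; hence (ii) will reduce to a statement about the depths of the localizations at the primes in $\Supp_RM$.

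The core of the argument is the depth equality in (i), which I would prove by induction on $t:=\min\{\depth_RM,\depth_RM^\curlyvee\}$. For the base case $t=0$, the adjunction isomorphism $\Hom_R(k,M^\curlyvee)=\Hom_R(k,\Hom_R(C,M))\cong\Hom_R(k\otimes_RC,M)\cong\Hom_R(C/\fm C,M)$, together with the fact that $C/\fm C$ is a nonzero $k$-vector space, shows that $\Hom_R(k,M^\curlyvee)\neq0$ precisely when $\Hom_R(k,M)\neq0$, i.e.\ $\depth_RM=0$ if and only if $\depth_RM^\curlyvee=0$. For the inductive step, with $t>0$ one picks, by prime avoidance, an element $x\in\fm$ lying outside the finite set $\Ass_RM\cup\Ass_RM^\curlyvee$; then $x$ is a nonzerodivisor on both $M$ and $M^\curlyvee$. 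Applying $(-)^\curlyvee$ to $0\to M\overset{x}{\to}M\to M/xM\to0$ and invoking $\Ext^1_R(C,M)=0$ gives $(M/xM)^\curlyvee\cong M^\curlyvee/xM^\curlyvee$, while Fact \ref{example1}(i) applied to the same sequence gives $M/xM\in\mathcal{B}_C$. Since both $\depth_R(M/xM)$ and $\depth_R(M/xM)^\curlyvee$ drop by exactly $1$, the induction hypothesis applies and completes (i). I expect this reduction modulo $x$ to be the one genuinely delicate point: one has to check that it stays inside the Bass class and that it commutes with $(-)^\curlyvee$, and it is exactly here --- through $\Ext^1_R(C,M)=0$ --- that the hypothesis $M\in\mathcal{B}_C$ is used in an essential way.

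Finally, for (ii) I would localize: for each $\fp\in\Supp_RM$ the module $C_\fp$ is a semidualizing $R_\fp$-module, $M_\fp\in\mathcal{B}_{C_\fp}$ (since $\Hom$, $\Tor$, $\Ext$ and the evaluation map all commute with localization), and $(M^\curlyvee)_\fp\cong\Hom_{R_\fp}(C_\fp,M_\fp)$; hence part (i) over $R_\fp$ gives $\depth_{R_\fp}M_\fp=\depth_{R_\fp}(M^\curlyvee)_\fp$, and comparing both sides with $\min\{n,\depth R_\fp\}$ over all $\fp\in\Spec R$ yields (ii) via Definition \ref{S}. Part (iii) is then immediate from (i), since $M$ (resp.\ $M^\curlyvee$) is Cohen--Macaulay precisely when $\depth_RM=\dim_RM$ (resp.\ $\depth_RM^\curlyvee=\dim_RM^\curlyvee$), and (i) equates these two conditions.
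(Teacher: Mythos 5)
The paper gives no proof of its own, only the indication that the argument is dual to \cite[Lemma 2.11]{DS1}; your proposal carries out exactly that dualization, replacing $-\otimes_RC$ by $\Hom_R(C,-)$ and the Auslander class by the Bass class, and the details (support equality from $M\cong C\otimes_RM^\curlyvee$, the base case via tensor--Hom adjunction, and the inductive reduction modulo a regular element using $\Ext^1_R(C,M)=0$ together with the two-of-three property of $\mathcal{B}_C$) are all correct. This is the intended proof.
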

\begin{lem}\cite[Lemma 2.8]{wsw}\label{lwsw}
	Let $M$ be an $R$--module which is in the Bass class $\bc$. Then $\gkd_R(M)=0$ if and only if $\gd_R(M^{\curlyvee})=0$.
\end{lem}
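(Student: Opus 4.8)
The plan is to exploit that, since $M\in\bc$, the module $N:=M^{\curlyvee}$ lies in $\ac$, that $M\cong C\otimes_RN$ canonically, and that $\Tor^R_i(N,C)=0=\Ext^i_R(C,M)$ for all $i>0$ (Definition \ref{def1} and Fact \ref{example1}(iv)). Thus the assertion becomes $\gkd_R(C\otimes_RN)=0\iff\gd_R(N)=0$, and I would prove it by transporting complete resolutions across the mutually inverse functors $C\otimes_R-\colon\ac\to\bc$ and $(-)^{\curlyvee}\colon\bc\to\ac$. Here I rely on the standard descriptions of these dimensions by complete resolutions (Auslander--Bridger \cite{AB} and its $C$-relative form, cf. \cite{G}): $\gd_R(X)=0$ precisely when $X$ appears as a syzygy of a complete (finitely generated projective) resolution, and $\gkd_R(X)=0$ precisely when $X$ appears as a syzygy of a \emph{complete $\Pc$-resolution}, i.e.\ an exact complex $\mathbf{F}$ of $C$-projective modules with $\Hom_R(\mathbf{F},C)$ again exact.

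For $\gd_R(N)=0\Rightarrow\gkd_R(M)=0$, take a complete resolution $\mathbf{T}$ of $N$ with $N\cong\Ker(\mathbf{T}_0\to\mathbf{T}_{-1})$. Every projective term of $\mathbf{T}$ lies in $\ac$ (Fact \ref{example1}(i)), as does $N$, and a dimension shift along the short exact sequences coming from $\mathbf{T}$ (Fact \ref{example1}(i)) shows that every syzygy and cosyzygy occurring in $\mathbf{T}$ lies in $\ac$. Hence $\Tor^R_{>0}(C,-)$ vanishes on all of these, so $C\otimes_R\mathbf{T}$ is an exact complex of $C$-projective modules, and $M=C\otimes_RN$ is identified with $\Ker(C\otimes_R\mathbf{T}_0\to C\otimes_R\mathbf{T}_{-1})$ (the only point being that $C\otimes_R-$ keeps $N\hookrightarrow\mathbf{T}_0$ injective, since $\coker(N\hookrightarrow\mathbf{T}_0)$ is a cosyzygy and so lies in $\ac$). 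Finally, by Hom--tensor adjunction $\Hom_R(C\otimes_R\mathbf{T},C)\cong\Hom_R\bigl(\mathbf{T},\Hom_R(C,C)\bigr)=\Hom_R(\mathbf{T},R)$, which is exact because $\mathbf{T}$ is a complete resolution; thus $C\otimes_R\mathbf{T}$ is a complete $\Pc$-resolution of $M$, so $\gkd_R(M)=0$.

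For the converse, take a complete $\Pc$-resolution $\mathbf{F}$ of $M$ with $M\cong\Ker(\mathbf{F}_0\to\mathbf{F}_{-1})$. Each term $C\otimes_RP$ has $\Pc$-projective dimension $0$, hence lies in $\bc$ (Fact \ref{example1}(iii)); $M\in\bc$; and, again by the two-out-of-three property, every (co)syzygy of $\mathbf{F}$ lies in $\bc$. Therefore $\Ext^{>0}_R(C,-)$ vanishes on all of these, so $\Hom_R(C,\mathbf{F})$ is exact, and since $\Hom_R(C,C\otimes_RP)\cong P$ for projective $P$ (as $P\in\ac$, Fact \ref{example1}(i)), it is an exact complex of finitely generated projective modules with $N=\Hom_R(C,M)$ as the kernel at the relevant spot. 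Computing $\Hom_R\bigl(\Hom_R(C,\mathbf{F}),R\bigr)$ termwise via $\Hom_R\bigl(\Hom_R(C,C\otimes_RP),R\bigr)\cong\Hom_R(P,R)\cong\Hom_R(C\otimes_RP,C)$ shows it is isomorphic to $\Hom_R(\mathbf{F},C)$, hence exact. So $\Hom_R(C,\mathbf{F})$ is a complete resolution of $N$ and $\gd_R(N)=0$.

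The proof is not deep once the mechanism is in view; the substantive point — and the place to be careful — is that the hypothesis $M\in\bc$ (equivalently $N\in\ac$) is exactly what forces the two functors to preserve exactness of the ambient (co)resolutions, while the canonical identifications $\Hom_R(C\otimes_RP,C)\cong P^{*}$ and $\Hom_R(C,C\otimes_RP)\cong P$ on projectives are what allow a ``resolution over $C$'' to be re-read as a ``resolution over $R$''; outside these classes the argument breaks. The only other genuine chore is to check that the isomorphisms produced are induced by the canonical evaluation/biduality maps rather than being merely abstract, which is a routine chase through the tensor--hom adjunction. If one prefers to avoid invoking the complete-$\Pc$-resolution characterization, the implication $\gd_R(N)=0\Rightarrow\gkd_R(M)=0$ can instead be obtained by applying the same mechanism to an ordinary projective resolution and an ordinary projective coresolution of $N$, which yields directly $\Ext^i_R(M,C)\cong\Ext^i_R(N,R)$ for all $i$, $\Ext^{>0}_R(M^{\triangledown},C)=0$, and $M\cong M^{\triangledown\triangledown}$ — precisely the definition of $\gkd_R(M)=0$.
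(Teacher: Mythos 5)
Your proof is correct. Note, though, that the paper does not actually prove this lemma --- it is cited verbatim from \cite[Lemma 2.8]{wsw} --- so there is no in-paper argument to compare yours against; the task here is really just to check your argument stands on its own, which it does. The mechanism is exactly right: the Bass-class hypothesis $M\in\bc$, equivalently $M^{\curlyvee}\in\ac$, is what makes the two adjoint functors $C\otimes_R-$ and $\Hom_R(C,-)$ carry exact complexes of (co)resolutions in one class to exact complexes in the other, and the two-out-of-three property propagates membership to every cycle so that exactness is preserved termwise. Combined with the canonical identifications $\Hom_R(C,P\otimes_RC)\cong P$ and $\Hom_R(P\otimes_RC,C)\cong P^{*}$ on finitely generated projectives (via $P\in\ac$ and Hom--tensor adjunction), a totally acyclic complex of projectives for $M^{\curlyvee}$ is transported to a totally acyclic complex of $C$-projectives for $M$ and back. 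Your final remark is also worth retaining: the same bookkeeping applied to an ordinary resolution and coresolution of $N=M^{\curlyvee}$ yields $\Ext^i_R(M,C)\cong\Ext^i_R(N,R)$, the vanishing $\Ext^{>0}_R(M^{\triangledown},C)=0$, and the $C$-reflexivity $M\cong M^{\triangledown\triangledown}$, which recovers the definition of $\gkd_R(M)=0$ directly and avoids invoking the complete-$\Pc$-resolution characterization as a black box; that is likely closer in spirit to the argument given in \cite{wsw}.
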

In the following result, we give  sufficient conditions for an element $M\in\bc$ to be  a horizontally linked module with respect to $C$.
\begin{thm}\label{th1}
Assume that $M\in\bc$ is a $C$-syzygy and that $\id_{R_\fp}(C_\fp)<\infty$ for all $\fp\in X^1(R)$. If $M$ is $C$-stable and
$\underline{\Hom}_R(M^\curlyvee,C)=0=\Ext^1_R(M,C)$ then $M$ is a horizontally
linked module with respect to $C$.
\end{thm}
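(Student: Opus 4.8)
The plan is to verify the three hypotheses of Lemma~\ref{lem3} and then invoke that lemma. So I must check, under the stated assumptions $M\in\bc$ a $C$-syzygy, $M$ $C$-stable, $\underline{\Hom}_R(M^\curlyvee,C)=0=\Ext^1_R(M,C)$, and $\id_{R_\fp}(C_\fp)<\infty$ for $\fp\in X^1(R)$, that (i) $M$ is $C$-stable and a $C$-syzygy; (ii) $\underline{\Hom}_R(M^\curlyvee,C)=0=\underline{\Hom}_R(\lambda(M^\curlyvee),C)$; and (iii) $M\cong C\otimes_R M^\curlyvee$ together with $\lambda(M^\curlyvee)\cong\Hom_R(C,C\otimes_R\lambda(M^\curlyvee))$. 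Condition (i) is immediate from the hypotheses. The first half of (iii), $M\cong C\otimes_R M^\curlyvee$, is exactly the evaluation isomorphism that holds because $M\in\bc$ (Definition~\ref{def1}(i)). The first half of (ii), $\underline{\Hom}_R(M^\curlyvee,C)=0$, is a hypothesis. So the real work is to derive the vanishing $\underline{\Hom}_R(\lambda(M^\curlyvee),C)=0$ and the Auslander-class-type isomorphism $\lambda(M^\curlyvee)\cong\Hom_R(C,C\otimes_R\lambda(M^\curlyvee))$.

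For these, I would first translate $\Ext^1_R(M,C)=0$ into information about $M^\curlyvee$. Since $M\in\bc$, the functor $C\otimes_R-$ and $\Hom_R(C,-)$ set up an equivalence between $\ac$ and $\bc$ (Fact~\ref{example1}(iv)), and $M^\curlyvee\in\ac$; under this equivalence $\Ext^1_R(M,C)$ should correspond to $\Ext^1_R(M^\curlyvee,R)$ (using that $C\otimes_R-$ carries a projective resolution of $M^\curlyvee$ to a $\Pc$-resolution of $M$ and that $\Hom_R(C\otimes_R P, C)\cong\Hom_R(P,R)$ for $P$ projective, via the Hom-tensor adjunction together with $\Hom_R(C,C)\cong R$). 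Hence $\Ext^1_R(M^\curlyvee,R)=0$. Combined with $M^\curlyvee$ stable (which follows from $M$ being $C$-stable via the first half of (iii), exactly as in the proof of Lemma~\ref{lem3}), Theorem~\ref{MS} gives that $M^\curlyvee$ is horizontally linked, so $M^\curlyvee\cong\lambda^2(M^\curlyvee)$ and $\lambda(M^\curlyvee)$ is a syzygy module, hence stable and a syzygy. Now $\lambda(M^\curlyvee)$ sits in the exact sequence $0\to\lambda(M^\curlyvee)\to P_1^*\to \Tr(M^\curlyvee)\to 0$; dualizing and using $\Ext^1_R(\Tr(M^\curlyvee),C)$-type vanishing, together with $\lambda(M^\curlyvee)$ being a syzygy, should yield $\underline{\Hom}_R(\lambda(M^\curlyvee),C)\cong\Tor_1^R(\Tr\lambda(M^\curlyvee),C)=0$ by (\ref{def2}.1); here I would use that a first syzygy module tensored with $C$ behaves well, i.e. that $\Tor_1^R(\Tr N,C)=0$ when $N$ is a syzygy and $\Ext^1_R(\Tr N, R)$ is controlled — more precisely $\underline{\Hom}_R(N,C)\cong \Tor_1(\Tr N, C)$ vanishes because $N=\lambda(M^\curlyvee)$ is a second syzygy of $\Tr(M^\curlyvee)$, and second syzygies are "$1$-torsionfree with respect to $C$'' under the finite-injective-dimension-locally hypothesis.

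Finally, for $\lambda(M^\curlyvee)\cong\Hom_R(C,C\otimes_R\lambda(M^\curlyvee))$, I would show $\lambda(M^\curlyvee)\in\ac$, equivalently $C\otimes_R\lambda(M^\curlyvee)\in\bc$ (Fact~\ref{example1}(iv)); this is where the localization hypothesis $\id_{R_\fp}(C_\fp)<\infty$ for $\fp\in X^1(R)$ enters, via Lemma~\ref{lem1}(ii): one checks the natural map $\mu\colon\lambda(M^\curlyvee)\to\Hom_R(C,C\otimes_R\lambda(M^\curlyvee))$ is an isomorphism by verifying it is so at primes in $X^1(R)$ (where $C_\fp$ is dualizing-like, so reflexivity is automatic for syzygies) and checking the Serre conditions $\sn_2$ on $\lambda(M^\curlyvee)$ and $\sn_1$ on its tensor with $C$ — both of which follow from $\lambda(M^\curlyvee)$ being a second syzygy, hence satisfying $\sn_2$, and from the exact sequence defining it. The main obstacle I anticipate is precisely this last step: getting the global isomorphism $\mu$ out of the local-at-$X^1(R)$ data plus depth conditions, i.e. correctly marshalling Lemma~\ref{lem1}(ii) and verifying its Serre-condition hypotheses for $\lambda(M^\curlyvee)$; the $\Ext$-translation and the Martsinkovsky--Strooker application are comparatively routine once the bookkeeping with the $\ac$--$\bc$ equivalence is set up.
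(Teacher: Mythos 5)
Your plan — establish the three hypotheses of Lemma~\ref{lem3} — is exactly the paper's strategy, and you correctly isolate the two genuinely non-trivial steps: $\underline{\Hom}_R(\lambda(M^\curlyvee),C)=0$ and $\lambda(M^\curlyvee)\in\ac$ (i.e.\ the isomorphism $\mu$). But you leave real gaps in both, and you misjudge where the difficulty sits.

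For $\underline{\Hom}_R(\lambda(M^\curlyvee),C)=0$ you invoke an unstated principle that second syzygies are ``$1$-torsionfree with respect to $C$'' under the local injective-dimension hypothesis. That is not a lemma available here, and the paper argues differently: it looks at a prime $\fp\in\Ass_R\bigl(\Tor_1^R(\Tr\lambda(M^\curlyvee),C)\bigr)$, uses $M\cong M^\curlyvee\otimes_RC$ and the exact sequence obtained by tensoring $0\to M^\curlyvee\to P\to\Tr\lambda(M^\curlyvee)\to0$ with $C$ to conclude $\depth_{R_\fp}M_\fp=0$, then the hypothesis that $M$ is a $C$-syzygy forces $\fp\in X^0(R)$; at such $\fp$ one has $\gd_{R_\fp}(M^\curlyvee_\fp)=0$ (Fact~\ref{example1}(iv), (ii) and Theorem~\ref{G3}(iv)), combines this with $\Ext^1_R(\Tr\lambda(M^\curlyvee),R)=0$ (automatic, since $\lambda(M^\curlyvee)$ is a syzygy) to get $\gd_{R_\fp}\bigl((\Tr\lambda(M^\curlyvee))_\fp\bigr)=0$, hence $(\Tr\lambda(M^\curlyvee))_\fp\in\mathcal{A}_{C_\fp}$, hence $\Tor_1^R(\cdot,C)_\fp=0$ — contradiction. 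This associated-primes argument is the substance your sketch is missing.

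For the isomorphism $\mu$, you assert that $\lambda(M^\curlyvee)$ is a second syzygy ``hence satisfying $\sn_2$,'' but you never derive the second-syzygy property, and that is precisely the crux. It requires $\Ext^2_R(\Tr\lambda(M^\curlyvee),R)=0$, and the paper obtains this by the chain $\Ext^2_R(\Tr\lambda(M^\curlyvee),R)\cong\Ext^1_R(\lambda^2(M^\curlyvee),R)\cong\Ext^1_R(M^\curlyvee,R)\cong\Ext^1_R(M^\curlyvee,C^\curlyvee)\cong\Ext^1_R(M,C)=0$, using horizontal linkage of $M^\curlyvee$ and \cite[Theorem~4.1, Corollary~4.2]{TW}. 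You noticed the translation $\Ext^1_R(M,C)\leftrightarrow\Ext^1_R(M^\curlyvee,R)$ but did not connect it to the $\Ext^2$ vanishing for the transpose of $\lambda(M^\curlyvee)$, which is where the hypothesis $\Ext^1_R(M,C)=0$ actually earns its keep. The remaining ingredients you name — $\sn_1$ for $\lambda(M^\curlyvee)\otimes_RC$ from the exact sequence (using $\Tor_1^R(\Tr(M^\curlyvee),C)\cong\underline{\Hom}_R(M^\curlyvee,C)=0$) and the $\gd=0$ argument at primes in $X^1(R)$ via Lemma~\ref{lem2} and Theorem~\ref{G3}(iv) — are indeed what the paper uses, so once the two gaps above are filled your proposal lines up with the paper's proof.
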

\begin{proof}
We shall prove that the conditions of Lemma \ref{lem3} are satisfied.
First note that 
\begin{equation}\tag{\ref{th1}.1}
M\cong M^{\curlyvee}\otimes C,
\end{equation}
because $M\in\bc$. As, seen in the proof of Lemma \ref{lem3}, $M^\curlyvee$ is horizontally linked. 
In other words, $M^\curlyvee\cong\lambda^2(M^\curlyvee)$ and so 
we obtain the exact sequence
\begin{equation}\tag{\ref{th1}.2}
0\longrightarrow M^\curlyvee\longrightarrow P\longrightarrow\Tr\lambda(M^\curlyvee)\longrightarrow0,
\end{equation}
where $P$ is a projective module. Applying  $-\otimes_RC$ gives the exact sequence
\begin{equation}\tag{\ref{th1}.3}
0\rightarrow\Tor_1^R(\Tr\lambda(M^\curlyvee),C)\rightarrow
M^\curlyvee\otimes_RC\rightarrow P\otimes_RC\rightarrow\Tr\lambda
(M^\curlyvee)\otimes_RC\rightarrow0.
\end{equation}
Let $\fp\in\Ass_R(\Tor_1^R(\Tr\lambda(M^\curlyvee),C))$. It follows from (\ref{th1}.1) and the
exact sequence (\ref{th1}.3) that
$\depth_{R_\fp}(M_\fp)=\depth_{R_\fp}((M^\curlyvee\otimes_RC)_\fp)=0$.
As $M$ is a $C$-syzygy module, $\fp\in X^0(R)$. Note that, by the Fact \ref{example1}(iv), $M^\curlyvee\in\ac$ and so,  $\gd_{R_\fq}((M^\curlyvee)_\fq)=0$ for all $\fq\in X^0(R)$ by the Fact \ref{example1}(ii) and Theorem \ref{G3}(iv).
As $\lambda(M^\curlyvee)$ is a syzygy, one has
\begin{equation}\tag{\ref{th1}.4}
\Ext^1_R(\Tr\lambda(M^\curlyvee),R)=0.
\end{equation}
It follows from (\ref{th1}.4), Theorem \ref{G3} and the exact sequence (\ref{th1}.2) that $\gd_{R_\fp}((\Tr\lambda(M^\curlyvee))_\fp)=0$. In other words, by Fact \ref{example1}(ii), $(\Tr\lambda(M^\curlyvee))_\fp\in\mathcal{A}_{C_\fp}$. Hence $\Tor_1^R(\Tr\lambda(M^\curlyvee),C)_\fp=0$ which is a contradiction. Therefore, $\underline{\Hom}_R(\lambda(M^\curlyvee),C)\cong\Tor_1^R(\Tr\lambda( M^\curlyvee),C)=0$ by (\ref{def2}.1).

Now we prove that the natural map $\mu:\lambda(M^\curlyvee)\rightarrow\Hom_R(C,C\otimes_R\lambda(M^{\curlyvee}))$ is an isomorphism.
To this end, we concentrate on Lemma \ref{lem1}. As $M^\curlyvee$ is horizontally linked, we obtain the isomorphisms
\[\begin{array}{rl}\tag{\ref{th1}.5}
\Ext^2_R(\Tr\lambda(M^\curlyvee),R)&\cong\Ext^1_R(\lambda^2(M^\curlyvee),R)\\
&\cong\Ext^1_R(M^\curlyvee,R)\\
&\cong\Ext^1_R(M^\curlyvee,C^\curlyvee)\\
&\cong\Ext^1_R(M,C)=0,
\end{array}\]
by \cite[Theorem 4.1 and Corollary 4.2]{TW}. It follows from (\ref{th1}.4) and (\ref{th1}.5) that
$\lambda(M^\curlyvee)$ is second syzygy and so it satisfies $\widetilde{S}_2$ by \cite[Proposition 11]{M1}.
By the exact sequence $0\rightarrow\lambda(M^\curlyvee)\rightarrow P'\rightarrow\Tr(M^\curlyvee)\rightarrow0$ and the fact that $\Tor_1^R(\Tr(M^\curlyvee),C)\cong\underline{\Hom}_R(M^\curlyvee,C)=0$, it follows that $\lambda(M^\curlyvee)\otimes_RC$ satisfies $\widetilde{S}_1$.
As $M$ satisfies $\sn_1$, by Fact \ref{example1}(ii), (iv), Lemma \ref{lem2} and Theorem \ref{G3}(iv), $\gd_{R_\fp}((M^\curlyvee)_\fp)=0$ for all $\fp\in X^1(R)$. Therefore, $\gd_{R_\fp}((\lambda(M^\curlyvee))_\fp)=0$ for all $\fp\in X^1(R)$ by \cite[Theorem 1]{MS} and so $(\lambda(M^\curlyvee))_\fp\in\mathcal{A}_{C_\fp}$ for all $\fp\in X^1(R)$ by Fact \ref{example1}(ii). Hence $\mu$ is an isomorphism by Lemma \ref{lem1}.
Now the assertion is clear by Lemma \ref{lem3}.
\end{proof}

In \cite[Corollary 2]{MS}, Martsinkovsky and Strooker proved that, over a
Gorenstein ring, horizontally linkage preserves the property of a module to be
maximal Cohen-Macaulay while they showed, in \cite[Example]{MS}, that over non-Gorenstein rings, being  maximal Cohen-Macaulay need not be preserved under horizontally linkage.
In the following, it is shown that, over a Cohen-Macaulay local ring with the canonical module,
horizontally linkage with respect to canonical module preserves maximal Cohen-Macaulayness. Note that over a Gorenstein ring, every module has finite Gorenstein injective dimension. Therefore, the following result can be viewed as a generalization of \cite[Corollary 2]{MS}. 
\begin{cor}\label{c1}
Let $R$ be a Cohen-Macaulay local ring of with canonical module $\omega_R$. Assume that $M$ is a maximal Cohen-Macaulay $R$--module of finite Gorenstein injective dimension. If $M$ is $\omega_R$-stable then the following statements hold true.
\begin{itemize}
\item[(i)] $M$ is horizontally linked with respect to $\omega_R$.
\item[(ii)] $\lambda_R(\omega_R,M)$ has finite Gorenstein injective dimension.
\item[(iii)] $\lambda_R(\omega_R,M)$ is maximal Cohen-Macaulay.
\end{itemize}
\end{cor}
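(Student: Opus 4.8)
The plan is to derive all three statements by specializing the machinery already developed to $C=\omega_R$ throughout. For part (i) I would simply check the hypotheses of Theorem \ref{th1}. Three of them are immediate: $M\in\mathcal{B}_{\omega_R}$ since $\Gid_R(M)<\infty$ (Fact \ref{example1}(ii)); $M$ is $\omega_R$-stable by assumption; and $\id_{R_\fp}((\omega_R)_\fp)<\infty$ for every $\fp$ because the canonical module has finite injective dimension and localizes to a canonical module. Moreover $\Ext^1_R(M,\omega_R)=0$, as $M$ is maximal Cohen-Macaulay over a Cohen-Macaulay local ring with canonical module. So it remains to produce the $\omega_R$-syzygy property and the vanishing $\underline{\Hom}_R(M^\curlyvee,\omega_R)=0$.

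Both of these rest on the observation that $\gd_R(M^\curlyvee)=0$. Indeed $M^\curlyvee\in\ac$ by Fact \ref{example1}(iv), hence $\gd_R(M^\curlyvee)<\infty$ by Fact \ref{example1}(ii), while $\depth_R(M^\curlyvee)=\depth_R(M)=\dim R$ by Lemma \ref{lem2}(i); so $\gd_R(M^\curlyvee)=\depth R-\depth_R(M^\curlyvee)=0$ by Theorem \ref{G3}(iv) (equivalently, invoke Lemma \ref{lwsw} together with $\gkd_R(M)=0$). From this, Theorem \ref{G3}(ii) gives $\gd_R(\Tr M^\curlyvee)=0$, so $\Tr M^\curlyvee\in\ac$ (Fact \ref{example1}(ii)) and therefore $\underline{\Hom}_R(M^\curlyvee,\omega_R)\cong\Tor_1^R(\Tr M^\curlyvee,\omega_R)=0$ by (\ref{def2}.1). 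For the syzygy condition, a module of $\mathsf{G}$-dimension zero embeds in a projective module, say $0\to M^\curlyvee\to P$; since $M^\curlyvee$ and $P$ both lie in $\ac$, so does the cokernel by Fact \ref{example1}(i), whence $\Tor_1^R(-,\omega_R)$ of the cokernel vanishes and $0\to M^\curlyvee\otimes_R\omega_R\to P\otimes_R\omega_R$ is exact. As $M\cong M^\curlyvee\otimes_R\omega_R$ (because $M\in\mathcal{B}_{\omega_R}$), this exhibits $M$ as an $\omega_R$-syzygy, and Theorem \ref{th1} yields (i).

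For parts (ii) and (iii) I would exploit that, since $\underline{\Hom}_R(M^\curlyvee,\omega_R)=0$, Remark \ref{remark3}(iii) gives $\lambda_R(\omega_R,M)=\Omega_{\omega_R}\trk(M^\curlyvee)\cong\lambda(M^\curlyvee)\otimes_R\omega_R$. As $\gd_R(M^\curlyvee)=0$, the syzygy $\lambda(M^\curlyvee)=\Omega\Tr(M^\curlyvee)$ again has $\mathsf{G}$-dimension zero, so $\lambda(M^\curlyvee)\in\ac$ by Fact \ref{example1}(ii) and $\depth_R(\lambda(M^\curlyvee))=\dim R$ by Theorem \ref{G3}(iv). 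Then $\lambda(M^\curlyvee)\otimes_R\omega_R\in\mathcal{B}_{\omega_R}$ by Fact \ref{example1}(iv), i.e.\ it has finite Gorenstein injective dimension, giving (ii); and $(\lambda(M^\curlyvee)\otimes_R\omega_R)^\curlyvee\cong\lambda(M^\curlyvee)$ by the Auslander class property, which is maximal Cohen-Macaulay, so $\lambda(M^\curlyvee)\otimes_R\omega_R$ is maximal Cohen-Macaulay by Lemma \ref{lem2}(iii), giving (iii).

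The real work is just organizing the input to Theorem \ref{th1}; the one step demanding a little care is the $\omega_R$-syzygy property, which depends on the Auslander class being closed under the relevant extension so that tensoring the embedding $M^\curlyvee\hookrightarrow P$ with $\omega_R$ remains left exact. Everything else is routine bookkeeping with $\mathsf{G}$-dimension zero modules and the Auslander/Bass class functors.
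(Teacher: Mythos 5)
Your proposal is correct and follows essentially the same route as the paper: check the hypotheses of Theorem~\ref{th1} by observing $\gd_R(M^\curlyvee)=0$ (via Lemma~\ref{lwsw} and depth), deduce $\Tr M^\curlyvee\in\mathcal{A}_{\omega_R}$ to kill the stable Hom, and then propagate Auslander/Bass class membership and maximal Cohen-Macaulayness to $\lambda_R(\omega_R,M)$. The only cosmetic difference is that for (ii) and (iii) you pass through the isomorphism $\lambda_R(\omega_R,M)\cong\lambda(M^\curlyvee)\otimes_R\omega_R$ from Remark~\ref{remark3}(iii) while the paper argues directly from the short exact sequence (\ref{e}), and you spell out the $\omega_R$-syzygy step (tensoring the embedding $M^\curlyvee\hookrightarrow P$ with $\omega_R$ and using closure of $\mathcal{A}_{\omega_R}$) which the paper takes as known for maximal Cohen-Macaulay modules.
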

\begin{proof}
(i) By Fact \ref{example1} (ii), $M\in\mathcal{B}_{\omega_R}$. As $M$ is maximal Cohen-Macaulay, it is a $\omega_R$-syzygy and also $\Ext^1_R(M,\omega_R)=0$. Therefore, by Theorem \ref{th1}, it is enough to prove that $\underline{\Hom}_R(\Hom_R(\omega_R,M),\omega_R)=0$. Note that $\gd_R(\Hom_R(\omega_R,M))=0$ by Theorem \ref{G3} and Lemma \ref{lwsw}.
Hence $\gd_R(\Tr\Hom_R(\omega_R,M))=0$ and $\Tr\Hom_R(\omega_R,M)\in\mathcal{A}_{\omega_R}$ by Fact \ref{example1}(ii) so that $\Tor_i^R(\Tr\Hom_R(\omega_R,M),\omega_R)=0$ for all $i>0$. Indeed, by (\ref{def2}.1)
, $\underline{\Hom}_R(\Hom_R(\omega_R,M),\omega_R)\cong\Tor_1^R(\Tr\Hom_R(\omega_R,M),\omega_R)=0$.
Therefore, by Theorem \ref{th1}, $M$ is horizontally linked with respect to $\omega_R$.

(ii) As we have seen in part (i), $\Tr(\Hom_R(\omega_R,M))\in\mathcal{A}_{\omega_R}$. Hence $\Tr_{\omega_R}(\Hom_R(\omega_R,M))\in\mathcal{B}_{\omega_R}$ by Fact \ref{example1}(iv) and Remark \ref{remark3}(i). Therefore, $\Gid_R(\lambda_R(\omega_R,M))<\infty$ by Fact \ref{example1}(i) and the exact sequence (\ref{e}). 

(iii) By Lemma \ref{lem2}, $\Hom_R(\omega_R,M)$ is maximal Cohen-Macaulay. Therefore 
$\Tr_{\omega_R}(\Hom_R(\omega_R,M))$ is maximal Cohen-Macaulay by Theorem \ref{G3}(ii). It follows from the exact sequence (\ref{e}) that $\lambda_R(\omega_R,M)$ is maximal Cohen-Macaulay.
\end{proof}
To prove Theorem A, we first bring the following lemma and remind a definition.
\begin{lem}\label{lem4}
Let $R$ be a Cohen-Macaulay local ring and let $I$ be an unmixed ideal of $R$. Assume that $K$ is a semidualizing $R/I$--module and that $M$ is an $R$--module which is linked by $I$ with respect to $K$. Then $\gr_R(M)=\gr_R(I)$.
\end{lem}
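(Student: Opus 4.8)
The plan is to relate the grade of $M$ to the grade of $I$ by passing through the linked module $N$ and the structure of linkage with respect to $K$. Since $M$ is linked by $I$ with respect to $K$, we have $I \subseteq \ann_R(M)$, so immediately $\gr_R(I) \leq \gr_R(M)$ (a regular sequence on $R$ inside $I$ is automatically a regular sequence on anything annihilated by $I$, hence $\gr_R(I) \le \gr_R(M)$ — more precisely $\gr_R(\ann_R M) \le \gr_R(M)$ always, and $I \subseteq \ann_R M$ gives $\gr_R(I) \le \gr_R(\ann_R M)$... wait, that inequality goes the wrong way). Let me restate: $I \subseteq \ann_R(M)$ gives $\gr_R(\ann_R M) \geq \gr_R(I)$ is false in general; rather $\V(\ann_R M) \subseteq \V(I)$, so $\gr_R(I) = \depth_I R \le \depth_{\ann_R M} R = \gr_R(M)$. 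So one inequality, $\gr_R(M) \ge \gr_R(I)$, is essentially free.

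For the reverse inequality, the plan is to work over $\overline{R} = R/I$, which is Cohen-Macaulay (as $I$ is unmixed over a Cohen-Macaulay ring — actually I should be careful; unmixed plus $R$ Cohen-Macaulay does give $\overline R$ Cohen-Macaulay since all associated primes have the same height $= \gr_R(I)$, and $R$ is catenary, equidimensional locally). Set $n = \gr_R(I) = \dim R - \dim \overline{R}$. Now $M$ and $N$ are finitely generated $\overline{R}$-modules, horizontally linked with respect to the semidualizing $\overline{R}$-module $K$, so by the defining exact sequence (analogous to (\ref{e})) $N = \lambda_{\overline R}(K,M) = \Omega_K \trk(M^\curlyvee)$ sits inside a $K$-projective $\overline R$-module, and symmetrically for $M$. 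In particular both $M$ and $N$ are $K$-syzygy $\overline R$-modules, hence unfaithful quotients... the key point: a $C$-syzygy module over a local ring has depth at least $\min\{1, \depth R\}$ wait — more useful is that $\dim_{\overline R} M = \dim \overline R$. Indeed, $M$ embeds in $P \otimes_{\overline R} K$ with $P$ free over $\overline R$, and $K$ is semidualizing so $\Supp_{\overline R} K = \Spec \overline R$ and $\dim_{\overline R} K = \dim \overline R$; hence $\dim_{\overline R} M = \dim \overline R$ because $M$ being horizontally linked with respect to $K$ means $M \cong \lambda^2$, so $M$ has no $K$-projective summands but is a $K$-syzygy, forcing $\Supp M = \Spec \overline R$ (any prime $\mathfrak p$ of $\overline R$ at which $M_{\mathfrak p} = 0$ would... hmm, need $M$ to have a minimal generator surviving).

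**The cleanest route** is probably: $\gr_R(M) = \depth R - \sup\{i : \hh^i_{\fm} \text{-type argument}\}$ — no. Use instead $\gr_R(M) = \inf\{i : \Ext^i_R(M,R) \ne 0\}$, and relate $\Ext^\bullet_R(M,R)$ to $\Ext^\bullet_{\overline R}(M, \overline R^\dagger$-type data$)$ via the change-of-rings/Rees isomorphism $\Ext^{i+n}_R(M,R) \cong \Ext^i_{\overline R}(M, \Ext^n_R(\overline R, R))$, valid since $\overline R$ is $G$-perfect over $R$ of grade $n$ (being Cohen-Macaulay of codimension $n$ in a Cohen-Macaulay ring, even Gorenstein is not needed — $\gr = \gd$ here follows from Cohen-Macaulayness of $\overline R$ and... actually $\gd_R \overline R < \infty$ is NOT automatic without a Gorenstein or finite-$\gd$ hypothesis). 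Hmm. So I would instead avoid claiming finite $\gd$ and argue: $\gr_R(M) = \gr_R(R/\ann_R M)$ and $\sqrt{\ann_R M} \supseteq \sqrt{I}$ with equality iff $M$ is faithful over $\overline R$; so it suffices to show $M$ is a faithful $\overline R$-module, equivalently $\Supp_{\overline R} M = \Spec \overline R$, equivalently $\ann_{\overline R} M$ is nilpotent.

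**The main obstacle** is exactly this faithfulness: showing a module horizontally linked with respect to a semidualizing $K$ over $\overline R$ has full support. The plan to handle it: since $M \cong \lambda_{\overline R}^2(K,M)$, $M$ is a $K$-syzygy, so $M \hookrightarrow K^{(r)}$ for some $r \ge 1$ (taking $r$ minimal so this is the start of a $\Pc$-resolution with $M$ having no $K$-projective summand). Then $\ann_{\overline R} M \supseteq \ann_{\overline R}(K^{(r)}) = \ann_{\overline R} K = 0$ would be the wrong direction again — annihilator of a submodule is larger. Instead dualize: apply $(-)^\curlyvee = \Hom_{\overline R}(K,-)$ and use that $M^\curlyvee$ is a genuine first syzygy over $\overline R$ (shown in the proof of Lemma \ref{lem3}), so $M^\curlyvee \hookrightarrow \overline R^{(s)}$, giving $\Ass_{\overline R} M^\curlyvee \subseteq \Ass \overline R$; and $M^\curlyvee$ stable and nonzero forces $s \ge 1$ with the composite $M^\curlyvee \to \overline R^{(s)} \to \overline R$ nonzero on some component, hence $\ann_{\overline R} M^\curlyvee$ contains no associated prime of $\overline R$, i.e. $\gr_{\overline R}(\ann_{\overline R} M^\curlyvee) \ge 1$ whenever $\depth \overline R \ge 1$; but we want the annihilator nilpotent. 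The honest finish: $\Min(\overline R) \subseteq \Ass(\overline R)$, and I claim each minimal prime $\mathfrak p$ of $\overline R$ is in $\Supp M^\curlyvee$ — localize at $\mathfrak p$: $\overline R_{\mathfrak p}$ is Artinian local, $K_{\mathfrak p}$ is semidualizing over it hence $\cong \overline R_{\mathfrak p}$-ish (a semidualizing module over Artinian local ring has the ring's length properties) and $M_{\mathfrak p}$ is a horizontally-linked-with-respect-to-$K_{\mathfrak p}$ stable module over an Artinian ring, which is $0$ only if $M_{\mathfrak p} = \lambda^2 = 0$, consistent — so this needs the extra input that $M \ne 0$ propagates to each minimal prime, which uses that $M$ stable $\Rightarrow$ $M^\curlyvee$ stable $\Rightarrow$ $M^\curlyvee$ has $\overline R$ as a quotient-related... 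I would ultimately invoke \cite[Theorem 1]{MS} style support results: for the horizontally linked (over $\overline R$, with respect to $R/I$, i.e. classical) module $M^\curlyvee$, one has $\gr_{\overline R}(M^\curlyvee) = 0$ when $M^\curlyvee \ne 0$ because it is stable (a stable module over a local ring has depth $0$ or is a first syzygy with associated primes inside $\Ass \overline R$; combined with $\Supp$ arguments this gives $\gr_{\overline R} M^\curlyvee = 0$), and then $\gr_R M = \gr_R M^\curlyvee = n + \gr_{\overline R} M^\curlyvee = n$ by the change of rings for grade under $R \to \overline R$ with $\overline R$ Cohen-Macaulay equidimensional of codimension $n$, together with Lemma \ref{lem2}-type depth equalities $\gr_R M = \gr_R M^\curlyvee$ (from $M \in \mathcal B_K$ over $\overline R$, $\depth_{\overline R} M = \depth_{\overline R} M^\curlyvee$, hence same grade over $R$). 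So the skeleton is: (1) $\gr_R M \ge n$ from $I \subseteq \ann_R M$; (2) reduce the reverse inequality, via $\gr_R(-) = n + \gr_{\overline R}(-)$ for $\overline R$-modules, to $\gr_{\overline R} M^\curlyvee = 0$; (3) prove the latter from stability of $M^\curlyvee$ (itself from stability of $M$, which is implicit in "horizontally linked").
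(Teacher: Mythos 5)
The paper's proof is shorter and more direct, and your version has real gaps. The paper argues with $M$ itself: over the Cohen-Macaulay ring $R$ one has $\gr_R(M)=\depth R_\fp$ for some $\fp\in\Min_R(M)$, so $\fp/I\in\Min_{\overline R}(M)\subseteq\Ass_{\overline R}(M)$; linkage with respect to $K$ makes $M$ a $K$-syzygy (it embeds in a $K$-projective module by the exact sequence (\ref{e})), so $\Ass_{\overline R}(M)\subseteq\Ass_{\overline R}(K)=\Ass(\overline R)$; and unmixedness then forces $\fp$ to be a minimal prime of $I$, giving $\depth R_\fp=\gr_R(I)$. There is no pass to $M^\curlyvee$, no Bass-class hypothesis, and no change-of-rings identity.

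Your skeleton, by contrast, has several concrete problems. First, the parenthetical claim that ``unmixed plus $R$ Cohen-Macaulay gives $\overline R$ Cohen-Macaulay'' is false (take $I$ the intersection of two transverse height-two primes in a four-dimensional regular local ring: $I$ is unmixed but $R/I$ has depth one), so the identity $\gr_R(-)=n+\gr_{\overline R}(-)$ is not available in the generality you use it; what one does have, and what you could have used directly, is $\gr_R(N)=\dim R-\dim_R N$ for $R$ Cohen-Macaulay. Second, you invoke Lemma \ref{lem2} to transfer between $M$ and $M^\curlyvee$; that lemma requires $M\in\mathcal{B}_K$, which is not among the hypotheses of the statement and does not follow from $M$ being horizontally linked with respect to $K$ alone. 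Third, your step (3) attributes $\gr_{\overline R}(M^\curlyvee)=0$ to stability of $M^\curlyvee$, but stability is not the operative fact; what matters is that $M^\curlyvee$ is a nonzero first syzygy of $\overline R$ (giving $\Ass(M^\curlyvee)\subseteq\Ass(\overline R)$) together with unmixedness of $I$ (giving $\Ass(\overline R)=\Min(\overline R)$). Finally, the excursion toward showing $\ann_{\overline R}(M^\curlyvee)$ is nilpotent, or $\Supp M^\curlyvee=\Spec\overline R$, is unnecessary: you need only one associated prime of $\overline R$ to lie in $\Supp(M^\curlyvee)$, and that is automatic once $M^\curlyvee\ne0$ and $\Ass(M^\curlyvee)\subseteq\Ass(\overline R)$.
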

\begin{proof}
First note that $\gr_R(M)=\inf\{\depth R_\fq\mid\fq\in\Supp_R(M)\}$. Therefore, $\gr_R(M)=\depth R_\fp$ for some $\fp\in\Min_R(M)$ 
 and so $\fp/I\in\Min_{R/I}(M)$.
As $M$ is linked by $I$ with respect to $K$, it is a first $K$-syzygy module and so $\fp/I\in\Ass_{R/I}(R/I)$, because $\Ass_{R/I}(K)=\Ass_{R/I}(R/I)$. As $I$ is unmixed, $\gr(I)=\depth R_\fp$.
\end{proof}
Let $(R,\fm,k)$ be a local ring and let $M$ be an $R$--module. For every integer $n\geq 0$ the $n$th Bass number $\mu^n_R(M)$ is the dimension of the $k$-vector space $\Ext^n_R(k,M)$.
\begin{dfn}\cite{AvF}
	\emph{An ideal $\fa$ of a local $R$ is called \emph{quasi-Gorenstein} if $\gd_R(R/\fa)<\infty$ and for every $i\geq0$ there is an equality of Bass numbers
	$$\mu^{i+\depth R}_R(R)=\mu^{i+\depth R/\fa}_{R/\fa}(R/\fa).$$	}
\end{dfn}

\begin{thm}\cite[Corollary 7.9]{AvF}\label{ex} Let $R$ be a Cohen-Macaulay local ring with canonical module $\omega_R$ and let $\fa$ be a quasi-Gornestein ideal of $R$. For an $R/\fa$--module $M$, $\Gid_{R}(M)<\infty$ if and only if $\Gid_{R/\fa}(M)<\infty$. Also, $\gd_{R}(M)<\infty$ if and only if $\gd_{R/\fa}(M)<\infty$ .
\end{thm}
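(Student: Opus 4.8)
The plan is to route the statement through the Auslander and Bass classes and to transfer (co)homological finiteness back and forth along the surjection $\varphi\colon R\twoheadrightarrow\overline{R}:=R/\fa$, the crucial input being the relative dualizing complex $D^{\varphi}:=\R\Hom_R(\overline{R},R)$. By Fact \ref{example1}(ii), over the Cohen-Macaulay local ring $R$ with canonical module one has $\gd_R(M)<\infty\iff M\in\mathcal{A}_{\omega_R}$ and $\Gid_R(M)<\infty\iff M\in\mathcal{B}_{\omega_R}$, and the analogous characterizations hold over $\overline{R}$ once $\overline{R}$ is known to be Cohen-Macaulay with a canonical module; so the problem reduces to comparing, for an $\overline{R}$-module $M$, membership of $M$ in the Foxby classes of $\omega_R$ over $R$ with those of $\omega_{\overline{R}}$ over $\overline{R}$. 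The first and decisive task is therefore to unpack the quasi-Gorenstein hypothesis.

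Set $g:=\gd_R(\overline{R})$, finite by hypothesis; by Theorem \ref{G3}(iii),(iv) one has $g=\sup\{i:\Ext^i_R(\overline{R},R)\neq 0\}=\depth R-\depth \overline{R}$, while $\gr_R(\fa)=\inf\{i:\Ext^i_R(\overline{R},R)\neq 0\}\le g$. I expect the main obstacle to be the claim that quasi-Gorensteinness forces $D^{\varphi}\simeq\Sigma^{-g}\overline{R}$ in $\mathsf{D}(\overline{R})$, and here is the intended route to it. Since $\gd_R(\overline{R})<\infty$, $\overline{R}$ is reflexive over $R$ (i.e.\ $\overline{R}\simeq\R\Hom_R(\R\Hom_R(\overline{R},R),R)$), so $\R\Hom_{\overline{R}}(D^{\varphi},D^{\varphi})\simeq\R\Hom_R(\R\Hom_R(\overline{R},R),R)\simeq\overline{R}$; thus $D^{\varphi}$ is a semidualizing complex for $\overline{R}$, and for any such complex $C$ a computation with minimal resolutions (using $\R\Hom_{\overline{R}}(k,\overline{R})\simeq\R\Hom_{\overline{R}}(k\tensor_{\overline{R}}C,C)$) yields the identity of Poincar\'e and Bass series $P^{\overline{R}}_{C}(t)\cdot I^{\overline{R}}_{C}(t)=I^{\overline{R}}_{\overline{R}}(t)$. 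On the other hand the adjunction $\R\Hom_R(k,R)\simeq\R\Hom_{\overline{R}}(k,D^{\varphi})$ gives $I^{\overline{R}}_{D^{\varphi}}(t)=I^{R}_{R}(t)$, and since $g=\depth R-\depth \overline{R}$ the hypothesis $\mu^{i+\depth R}_R(R)=\mu^{i+\depth \overline{R}}_{\overline{R}}(\overline{R})$ reads $I^{R}_{R}(t)=t^{g}\,I^{\overline{R}}_{\overline{R}}(t)$. Substituting into the previous identity and cancelling the nonzero series $I^{\overline{R}}_{\overline{R}}(t)$ gives $P^{\overline{R}}_{D^{\varphi}}(t)=t^{-g}$, so the minimal free resolution of $D^{\varphi}$ is $\Sigma^{-g}\overline{R}$, whence $D^{\varphi}\simeq\Sigma^{-g}\overline{R}$. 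In particular $\Ext^i_R(\overline{R},R)=0$ for $i\neq g$ and $\Ext^g_R(\overline{R},R)\cong\overline{R}$; hence $\gr_R(\fa)=g$, so $\overline{R}$ is $G$-perfect and, $R$ being Cohen-Macaulay, $\overline{R}$ is Cohen-Macaulay.

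It remains to transfer. Since $\overline{R}\in\mathcal{A}_{\omega_R}$ (Fact \ref{example1}(ii)), the Foxby equivalence places $\overline{R}\tensor_R\omega_R\simeq\overline{R}\otimes_R\omega_R$ (a module, as $\Tor^R_{>0}(\overline{R},\omega_R)=0$) in $\mathcal{B}_{\omega_R}$; applying the quasi-inverse $\R\Hom_R(\omega_R,-)$ to $\R\Hom_R(\overline{R},\omega_R)$ yields $\R\Hom_R(\overline{R},R)=D^{\varphi}\simeq\Sigma^{-g}\overline{R}$, while applying it to $\overline{R}\otimes_R\omega_R$ yields $\overline{R}$; comparing, $\R\Hom_R(\overline{R},\omega_R)\simeq\Sigma^{-g}(\overline{R}\otimes_R\omega_R)$, so the canonical module of the Cohen-Macaulay ring $\overline{R}$ is $\omega_{\overline{R}}\cong\overline{R}\otimes_R\omega_R$. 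Now, for an $\overline{R}$-module $M$, the derived Hom--tensor adjunctions give
\[
\R\Hom_R(M,\omega_R)\simeq\R\Hom_{\overline{R}}\bigl(M,\R\Hom_R(\overline{R},\omega_R)\bigr)\simeq\Sigma^{-g}\R\Hom_{\overline{R}}(M,\omega_{\overline{R}})
\]
and $M\tensor_R\omega_R\simeq M\tensor_{\overline{R}}(\overline{R}\otimes_R\omega_R)=M\tensor_{\overline{R}}\omega_{\overline{R}}$, as well as $\R\Hom_R(\omega_R,M)\simeq\R\Hom_{\overline{R}}(\omega_{\overline{R}},M)$ (any $R$-map from $\omega_R$ to the $\fa$-torsion module $M$ factors through $\overline{R}\otimes_R\omega_R$); feeding these into the defining conditions of $\mathcal{A}_{\omega_R}$, $\mathcal{B}_{\omega_R}$ in Definition \ref{def1}, together with the routine compatibility of the evaluation maps with the change of rings, shows $M\in\mathcal{A}_{\omega_R}\iff M\in\mathcal{A}_{\omega_{\overline{R}}}$ and $M\in\mathcal{B}_{\omega_R}\iff M\in\mathcal{B}_{\omega_{\overline{R}}}$. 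Both asserted equivalences then follow from Fact \ref{example1}(ii) applied over $R$ and over $\overline{R}$. (One half of the $\gd$-statement is elementary, via $\gd_R(M)\le\gd_R(\overline{R})+\gd_{\overline{R}}(M)$; it is the reverse implications and the entire Gorenstein-injective statement that genuinely use quasi-Gorensteinness.)
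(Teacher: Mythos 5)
The paper does not prove this statement; it is quoted verbatim from Avramov--Foxby \cite[Corollary 7.9]{AvF}, so the relevant comparison is with the cited source rather than with an argument in the present paper. Your proof is a correct reconstruction of the Avramov--Foxby strategy, specialized to the setting of a Cohen--Macaulay local ring with canonical module. The central step --- extracting from the Bass-series hypothesis, via the product formula $P^{\overline{R}}_{D^{\varphi}}(t)\cdot I^{\overline{R}}_{D^{\varphi}}(t)=I^{\overline{R}}_{\overline{R}}(t)$ and the adjunction $I^{\overline{R}}_{D^{\varphi}}(t)=I^{R}_{R}(t)$, the conclusion $\R\Hom_R(\overline{R},R)\simeq\Sigma^{-g}\overline{R}$ --- is precisely the content of their quasi-Gorenstein analysis, and the subsequent adjunctions transferring membership in the Foxby classes track their change-of-rings results. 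Two points are worth flagging. First, you invoke the Poincar\'e--Bass product identity for a semidualizing complex with only a parenthetical gesture at a proof; this is a genuine theorem (it is due to Christensen and appears as well in Gerko's and Frankild--Sather-Wagstaff's work on semidualizing complexes) and should be cited rather than dispatched with ``a computation with minimal resolutions yields.'' Second, your argument uses the full strength of the Cohen--Macaulay-with-canonical-module hypothesis (through Fact \ref{example1}(ii) and the Foxby equivalence attached to $\omega_R$), whereas Avramov--Foxby prove the statement for arbitrary quasi-Gorenstein local homomorphisms, without assuming the source is Cohen--Macaulay; what the more specialized route buys is transparency and the explicit identification $\omega_{\overline{R}}\cong\overline{R}\otimes_R\omega_R$, which the rest of the paper uses anyway, so nothing is lost for the purposes at hand.
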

  We now present Theorem A.
 
\begin{thm}\label{tt}
Let $R$ be a Cohen-Macaulay local ring of dimension $d$ with canonical module $\omega_R$ and let $\fa$ be a Cohen-Macaulay quasi-Gorenstein ideal of grade $n$,  $\overline{R}=R/\fa$. Assume that $M$ is a Cohen-Macaulay $R$--module of grade $n$ and of finite Gorenstein injective dimension
such that $\fa\subseteq\ann_R(M)$. If $M$ is $\omega_{\overline{R}}$-stable then the following statements hold true.
\begin{itemize}
\item[(i)] $M$ is linked by ideal $\fa$ with respect to $\omega_{\overline{R}}$.
\item[(ii)] $\lambda_{\overline{R}}(\omega_{\overline{R}},M)$ has finite Gorenstein injective dimension.
\item[(iii)] $\lambda_{\overline{R}}(\omega_{\overline{R}},M)$ is Cohen-Macaulay of grade $n$.
\end{itemize}
\end{thm}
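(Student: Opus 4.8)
The plan is to reduce everything to the Cohen-Macaulay local ring $\overline{R}=R/\fa$ and then quote Corollary \ref{c1}. Since $\fa$ is a Cohen-Macaulay ideal of grade $n$ in the Cohen-Macaulay local ring $R$, the quotient $\overline{R}$ is a Cohen-Macaulay local ring of dimension $d-n$, and because $R$ admits the canonical module $\omega_R$, the ring $\overline{R}$ admits the canonical module $\omega_{\overline{R}}\cong\Ext^n_R(\overline{R},\omega_R)$. As $\fa\subseteq\ann_R(M)$, we regard $M$ as an $\overline{R}$--module throughout.

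First I would check that $M$ is a \emph{maximal} Cohen-Macaulay $\overline{R}$--module. Over the Cohen-Macaulay local ring $R$ one has $\gr_R(M)=\dim R-\dim_R(M)$, so the hypothesis $\gr_R(M)=n$ gives $\dim_R(M)=d-n$, hence $\dim_{\overline{R}}(M)=d-n=\dim\overline{R}$; combined with $M$ being Cohen-Macaulay, this yields $\depth_{\overline{R}}(M)=\dim\overline{R}$, i.e. $M$ is maximal Cohen-Macaulay over $\overline{R}$. Next, since $\fa$ is quasi-Gorenstein, Theorem \ref{ex} applied to the $\overline{R}$--module $M$ turns $\Gid_R(M)<\infty$ into $\Gid_{\overline{R}}(M)<\infty$. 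Finally, $M$ is $\omega_{\overline{R}}$--stable by hypothesis. Thus all the hypotheses of Corollary \ref{c1} hold for the ring $\overline{R}$, its canonical module $\omega_{\overline{R}}$, and the module $M$, and I would invoke it to conclude: (a) $M$ is horizontally linked with respect to $\omega_{\overline{R}}$ over $\overline{R}$; (b) $N:=\lambda_{\overline{R}}(\omega_{\overline{R}},M)$ has finite Gorenstein injective dimension over $\overline{R}$; and (c) $N$ is a maximal Cohen-Macaulay $\overline{R}$--module.

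It then remains to transfer (a)--(c) back to $R$. For (i): $N$ is an $\overline{R}$--module, so $\fa\subseteq\ann_R(N)$, and together with $\fa\subseteq\ann_R(M)$ and (a) this is precisely the statement that $M$ is linked by $\fa$ with respect to $\omega_{\overline{R}}$. For (ii): applying Theorem \ref{ex} once more, now to the $\overline{R}$--module $N$, the finiteness of $\Gid_{\overline{R}}(N)$ yields $\Gid_R(N)<\infty$. For (iii): being maximal Cohen-Macaulay over the Cohen-Macaulay ring $\overline{R}$, the module $N$ is Cohen-Macaulay as an $R$--module with $\dim_R(N)=\dim\overline{R}=d-n$; and since linkage is symmetric, $N$ is linked by $\fa$ (which is unmixed, being a Cohen-Macaulay ideal) with respect to $\omega_{\overline{R}}$, so Lemma \ref{lem4} gives $\gr_R(N)=\gr_R(\fa)=n$ (equivalently, $\gr_R(N)=\dim R-\dim_R(N)=n$).

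Every step rests on results already established in the paper, so there is no serious obstacle; the only points that need care are the dimension bookkeeping that makes $M$ a \emph{maximal} Cohen-Macaulay $\overline{R}$--module (so that Corollary \ref{c1} applies verbatim over $\overline{R}$), and the correct double use of the quasi-Gorenstein hypothesis through Theorem \ref{ex} — once to push finiteness of $\Gid$ for $M$ down from $R$ to $\overline{R}$, and once to lift finiteness of $\Gid$ for $N$ back up from $\overline{R}$ to $R$.
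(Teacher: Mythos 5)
Your proposal is correct and follows essentially the same route as the paper: verify that $M$ is maximal Cohen--Macaulay over $\overline{R}$ by grade/dimension bookkeeping, transfer finiteness of Gorenstein injective dimension between $R$ and $\overline{R}$ via Theorem~\ref{ex}, invoke Corollary~\ref{c1} over $\overline{R}$, and finish part (iii) with Lemma~\ref{lem4} to pin down the grade of $\lambda_{\overline{R}}(\omega_{\overline{R}},M)$. The only cosmetic difference is that the paper reads off $\dim_R(M)=d-n$ directly from $M$ being Cohen--Macaulay of grade $n$, whereas you pass through the identity $\gr_R(M)=\dim R-\dim_R(M)$, which amounts to the same thing over a Cohen--Macaulay ring.
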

\begin{proof}
(i)  As $R$ is Cohen-Macaulay,
  $$d-n=d-\gr(\fa)=\dim(R/\fa).$$
On the other hand, as $M$ is Cohen-Macaulay of grade $n$, $$\depth_{\overline{R}}(M)=\depth_R(M)=\dim_R(M)=d-n.$$ Therefore, $M$ is maximal Cohen-Macaulay $\overline{R}$--module.
By Theorem \ref{ex}, $\Gid_{\overline{R}}(M)$ is finite and so $M$ is horizontally linked with respect to $\omega_{\overline{R}}$ as an $\overline{R}$--module by Corollary \ref{c1}.

(ii) By Corollary \ref{c1}, $\Gid_{\overline{R}}(\lambda_{\overline{R}}(\omega_{\overline{R}},M))<\infty$ which is equivalent to say that  $\Gid_{R}(\lambda_{\overline{R}}(\omega_{\overline{R}},M))<\infty$  by Theorem \ref{ex}.

(iii) By Corollary \ref{c1}, $\lambda_{\overline{R}}(\omega_{\overline{R}},M)$ is maximal Cohen-Macaulay $\overline{R}$--module. Hence $$\depth_{R}(\lambda_{\overline{R}}(\omega_{\overline{R}},M))=\depth_{\overline{R}}(\lambda_{\overline{R}}(\omega_{\overline{R}},M))=\dim(R/\fa).$$ Also, by Lemma \ref{lem4}, $\gr_R(\lambda_{\overline{R}}(\omega_{\overline{R}},M))=n$.
Hence, $\dim_{R}(\lambda_{\overline{R}}(\omega_{\overline{R}},M))=d-n=\dim R/\fa$. Therefore, $\lambda_{\overline{R}}(\omega_{\overline{R}},M)$ is Cohen-Macaulay as an $R$--module.
\end{proof}
Let $R$ be a Cohen-Macaulay local ring with canonical module $\omega_R$.
Set $$\mathcal{X}:=\cm(R)\cap\mathcal{A}_{\omega_R}\ \text{and}\ \ 
\mathcal{Y}:=\cm(R)\cap\mathcal{B}_{\omega_R},$$ where $\cm(R)$ is the category of Cohen-Macaulay $R$-module. Now we prove Theorem B.
\begin{thm}\label{thA}
Let $R$ be a Cohen-Macaulay local ring with canonical module $\omega_R$ and let $\fa$ be a Cohen-Macaulay quasi-Gorenstein ideal of grade $n$,  $\overline{R}=R/\fa$.
There is an adjoint equivalence
\begin{center}
	
	$\left\{\begin{array}{llll}
	M\in\mathcal{X} &{\bigg |} \begin{array}{lll}\ \ \  M\mathrm{\ is\ linked }\\  
	\mathrm{\ by\  the\ ideal}\ \fa \\
	\end{array} \end{array}
	\right\}
	\begin{array}{ll}\overset{-\otimes_{\overline{R}}\omega_{\overline{R}}}{\xrightarrow{\hspace*{2cm}}}\\ \underset{\Hom_{\overline{R}}(\omega_{\overline{R}},-)}{\xleftarrow{\hspace*{2cm}}}\\ \end{array}
	\left\{\begin{array}{lll}
	 N\in\mathcal{Y} &{\bigg |} \begin{array}{lll}  N\mathrm{\ is \ linked\ by\ the\ ideal }\\ 
	 \fa \ \mathrm{\ with\ respect\ to  }  \ \omega_{\overline{R}} \end{array}\end{array}
	\right\}.$
\end{center}
\end{thm}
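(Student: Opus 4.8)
The plan is to peel off the ideal $\fa$, reducing the statement to the Foxby equivalence between the Auslander and Bass classes over the Cohen--Macaulay local ring $\overline{R}$, and then to check that this equivalence carries the two distinguished subcategories of linked modules onto one another.

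First come the standing observations: $\overline{R}$ is Cohen--Macaulay local with canonical module $\omega_{\overline{R}}$ (since $\fa$ is a Cohen--Macaulay ideal of $R$); every module occurring on either side of the displayed correspondence is annihilated by $\fa$, hence is an $\overline{R}$-module; and the pair $-\otimes_{\overline{R}}\omega_{\overline{R}}$, $\Hom_{\overline{R}}(\omega_{\overline{R}},-)=(-)^{\curlyvee}$ is adjoint over $\overline{R}$ by the tensor--Hom adjunction, restricting to an adjoint equivalence $-\otimes_{\overline{R}}\omega_{\overline{R}}\colon\mathcal{A}_{\omega_{\overline{R}}}\rightleftarrows\mathcal{B}_{\omega_{\overline{R}}}\colon(-)^{\curlyvee}$ (Fact~\ref{example1}(iv) places the functors in the stated classes, and the unit $M\to\Hom_{\overline{R}}(\omega_{\overline{R}},M\otimes_{\overline{R}}\omega_{\overline{R}})$ and counit $\omega_{\overline{R}}\otimes_{\overline{R}}\Hom_{\overline{R}}(\omega_{\overline{R}},N)\to N$ are isomorphisms on $\mathcal{A}_{\omega_{\overline{R}}}$, resp.\ $\mathcal{B}_{\omega_{\overline{R}}}$, by Definition~\ref{def1}).

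The first substantive step is to describe each side intrinsically over $\overline{R}$. If $M\in\mathcal{X}$ is linked by $\fa$, then $\gr_R(M)=\gr_R(\fa)=n$ by Lemma~\ref{lem4} (with $K=\overline{R}$), so $M$ is a maximal Cohen--Macaulay $\overline{R}$-module; $\gd_R(M)<\infty$ forces $\gd_{\overline{R}}(M)<\infty$ by Theorem~\ref{ex}, hence $\gd_{\overline{R}}(M)=0$ by Theorem~\ref{G3}(iv); and $M$ is stable because a horizontally linked module is stable. The analogous computation on the right, using Lemma~\ref{lem4} with $K=\omega_{\overline{R}}$, Theorem~\ref{ex}, Lemma~\ref{lem2}, Fact~\ref{example1}(ii), and the fact that a module horizontally linked with respect to $\omega_{\overline{R}}$ is $\omega_{\overline{R}}$-stable, identifies an object there as a maximal Cohen--Macaulay $\overline{R}$-module in $\mathcal{B}_{\omega_{\overline{R}}}$ which is $\omega_{\overline{R}}$-stable. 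Next I would feed these descriptions into the equivalence. Being fully faithful, it matches direct-sum decompositions, sends projectives to $\omega_{\overline{R}}$-projectives and back, hence interchanges stability and $\omega_{\overline{R}}$-stability; and it preserves depth and dimension, since $\depth_{\overline{R}}(N)=\depth_{\overline{R}}(N^{\curlyvee})$ and $\dim_{\overline{R}}(N)=\dim_{\overline{R}}(N^{\curlyvee})$ for $N\in\mathcal{B}_{\omega_{\overline{R}}}$ by Lemma~\ref{lem2}(i) while every object of $\mathcal{A}_{\omega_{\overline{R}}}$ is an $N^{\curlyvee}$, so maximal Cohen--Macaulayness is preserved in both directions. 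Now if $M\in\mathcal{X}$ is linked by $\fa$, then $N:=M\otimes_{\overline{R}}\omega_{\overline{R}}$ is a maximal Cohen--Macaulay, $\omega_{\overline{R}}$-stable $\overline{R}$-module in $\mathcal{B}_{\omega_{\overline{R}}}$, hence horizontally linked with respect to $\omega_{\overline{R}}$ by Corollary~\ref{c1} applied over $\overline{R}$; since $\fa$ kills $N$ and $\lambda_{\overline{R}}(\omega_{\overline{R}},N)$, and $N\in\cm(R)\cap\mathcal{B}_{\omega_R}$ by Lemma~\ref{lem4} and Theorem~\ref{ex}, $N$ lies in the right-hand category. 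Conversely, if $N$ lies there, then $M:=N^{\curlyvee}$ is a stable maximal Cohen--Macaulay $\overline{R}$-module of $\mathsf{G}$-dimension $0$, hence a syzygy, hence horizontally linked over $\overline{R}$ by Theorem~\ref{MS}, i.e.\ linked by $\fa$, and $M\in\cm(R)\cap\mathcal{A}_{\omega_R}$ by Lemma~\ref{lem4} and Theorem~\ref{ex}. Since the unit and counit are isomorphisms on these subcategories, the two restricted functors are mutually quasi-inverse and adjoint, which is the asserted adjoint equivalence.

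I expect the main obstacle to be not any single deep input but the care needed in this dictionary: one must check that being linked by the ideal $\fa$ is genuinely equivalent to the triple of module-theoretic conditions---stable, maximal Cohen--Macaulay, finite $\mathsf{G}$-dimension on the left; $\omega_{\overline{R}}$-stable, maximal Cohen--Macaulay, finite Gorenstein injective dimension on the right---and, in particular, that Lemma~\ref{lem4} forces the grade to be $n$, so that a Cohen--Macaulay $R$-module annihilated by $\fa$ is automatically maximal Cohen--Macaulay over $\overline{R}$. Once this dictionary is in place, the equivalence is exactly Foxby equivalence over $\overline{R}$ restricted to subcategories that are stable under the two functors, with Corollary~\ref{c1} (applied over $\overline{R}$) and Theorem~\ref{MS} supplying the fact that the objects in play are automatically linked.
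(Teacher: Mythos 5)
Your proposal is correct, and the forward direction matches the paper's argument closely (Lemma~\ref{lem4}, Theorem~\ref{ex}, Lemma~\ref{lem2}, Fact~\ref{example1}, then Theorem~\ref{tt}/Corollary~\ref{c1} over $\overline{R}$). The genuine divergence is in the converse. You argue directly: $M=N^{\curlyvee}$ is maximal Cohen--Macaulay and lies in $\mathcal{A}_{\omega_{\overline{R}}}$, so $\gd_{\overline{R}}(M)=0$ by Fact~\ref{example1}(ii) and Theorem~\ref{G3}(iv), hence $M$ is a syzygy; since the Foxby equivalence interchanges projective and $\omega_{\overline{R}}$-projective summands, $M$ is stable; so $M$ is horizontally linked by Theorem~\ref{MS}. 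The paper instead builds the auxiliary module $X=\Hom_{\overline{R}}(\omega_{\overline{R}},\lambda_{\overline{R}}(\omega_{\overline{R}},N))$, shows $X$ and $\lambda_{\overline{R}}X$ lie in $\mathcal{A}_{\omega_{\overline{R}}}$, chases through Remark~\ref{remark3}(iii) to obtain $N\cong\lambda_{\overline{R}}X\otimes_{\overline{R}}\omega_{\overline{R}}$, and concludes $M\cong\lambda_{\overline{R}}X$, invoking an external stability lemma and \cite[Theorem 1]{MS}. Your route is shorter and avoids the auxiliary $X$ altogether; what the paper's route yields in addition is an explicit identification of $M$ with $\lambda_{\overline{R}}X$ --- i.e.\ a concrete description of the linked partner of $M$ as the image of the linked partner of $N$ under the Foxby functor --- but this extra information is not required by the statement of the theorem, so nothing is lost in your version. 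One small verification worth making explicit, which you glide over: to apply Lemma~\ref{lem4} with $K=\overline{R}$ one needs $\fa$ unmixed, which does hold here because $\fa$ is a Cohen--Macaulay ideal in a Cohen--Macaulay local ring; the paper instead cites \cite[Lemma 3.16]{Sa} for that step, but either works.
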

\begin{proof}
Let $M\in\mathcal{X}$, which is linked by the ideal $\fa$. By Theorem \ref{ex}, 
$M\in\mathcal{A}_{\omega_{\overline{R}}}$. Note that $\fa$ is a $G$-perfect ideal and so $\gr_R(M)=\gr_R(\fa)$ by \cite[Lemma 3.16]{Sa}. Therefore 
$$\depth_R(M)=\dim_R(M)=\dim R-\gr_R(M)=\dim R-\gr_R(\fa).$$ 
Hence $M$ is maximal Cohen-Macaulay $\overline{R}$-module.
Set $N=M\otimes_{\overline{R}}\omega_{\overline{R}}$. By \cite[Lemma 2.11]{DS1}, 
$N$ is maximal Cohen-Macaulay $\overline{R}$-module. Therefore $N\in\cm(R)$. Also, by Fact \ref{example1}(iv) and Theorem \ref{ex}, $N\in\mathcal{B}_{\omega_R}$. Hence $N\in\mathcal{Y}$.
As $M\in\mathcal{A}_{\omega_{\overline{R}}}$,
\begin{equation}\tag{\ref{thA}.1}
M\cong\Hom_{\overline{R}}(\omega_{\overline{R}},N).
\end{equation}
Note that $M$ is stable $\overline{R}$-module by Theorem \ref{MS}. It follows from (\ref{thA}.1) that $N$ is $\omega_{\overline{R}}$-stable. Hence, by Theorem \ref{tt}, $N$ 
is linked by the ideal $\fa$ with respect to $\omega_{\overline{R}}$.

Conversely, assume that $N\in\mathcal{Y}$ which is linked by the ideal $\fa$ with respect to $\omega_{\overline{R}}$. As $N$ is Cohen-Macaulay, by Lemma \ref{lem4},
$$\depth_R(N)=\dim_R(N)=\dim R-\gr_R(N)=\dim R-\gr_R(\fa).$$
Therefore $N$ is maximal Cohen-Macaulay $\overline{R}$-module. Set $M=\Hom_{\overline{R}}(\omega_{\overline{R}},N)$. Note that by Theorem \ref{ex}
$N\in\mathcal{B}_{\omega_{\overline{R}}}$. Hence $M\in\mathcal{A}_{\omega_{R}}$ by Fact \ref{example1}(iv), and Theorem \ref{ex}. Also, by Lemma \ref{lem2}, $M$ is maximal Cohen-Macaulay $\overline{R}$-module. Therefore $M\in\mathcal{X}$. Set $X=\Hom_{\overline{R}}(\omega_{\overline{R}},\lambda_{\overline{R}}(\omega_{\overline{R}},N))$. It follows from Theorem \ref{tt}(ii), Fact \ref{example1}(ii), (iv), and Theorem \ref{ex} that $X\in\mathcal{A}_{\omega_{\overline{R}}}$. Also, by Theoerem \ref{tt}(iii) and Lemma \ref{lem2}, $X$ is maximal Cohen-Macaulay $\overline{R}$-module. Therefore, by Theorem \ref{G3}(ii), (iv) and Fact \ref{example1}(ii), $\gd_{\overline{R}}(\lambda_{\overline{R}}X)=0$. In other words,
$\lambda_{\overline{R}}X\in\mathcal{A}_{\omega_{\overline{R}}}$. Hence,
\begin{equation}\tag{\ref{thA}.2}
\lambda_{\overline{R}} X\cong\Hom_{\overline{R}}(\omega_{\overline{R}},\lambda_{\overline{R}} X\otimes_{\overline{R}}\omega_{\overline{R}}).
\end{equation}
As $\lambda_{\overline{R}}X$ is a first syzygy of $\Tr_{\overline{R}}X$, by Fact \ref{example1}(i), $\Tr_{\overline{R}}X\in\mathcal{A}_{\omega_{\overline{R}}}$. Therefore $\underline{\Hom}_{\overline{R}}(X,\omega_{\overline{R}})\cong\Tor_1^{\overline{R}}(\Tr_{\overline{R}}X,\omega_{\overline{R}})=0$. As $N$ is linked by the ideal $\fa$ with respect to $\omega_{\overline{R}}$, it follows from Remark \ref{remark3}(iii) that
\begin{equation}\tag{\ref{thA}.3}
N\cong\Omega_{\omega_{\overline{R}}}\Tr_{\omega_{\overline{R}}}X\cong\lambda_{\overline{R}}X\otimes_{\overline{R}}\omega_{\overline{R}}.
\end{equation}
It follows from (\ref{thA}.2) and (\ref{thA}.3) that $M\cong\lambda_{\overline{R}}X$.
Hence, by \cite[Corollary 1.2.5]{Av} , $M$ is stable $\overline{R}$-module. By \cite[Theorem 1]{MS}, $M$ is linked by the ideal $\fa$.
\end{proof}
Let $\fa$ be an ideal of $R$ an let $M$ be an $R/\fa$-module. Recall that $M$ is said to be \emph{self-linked} by the ideal $\fa$ if $M\cong\lambda_{R/\fa}M$.
Let $K$ be a semidualizing $R/\fa$-module. An $R/\fa$-module $N$ is called \emph{self-linked} by the ideal $\fa$ with respect to $K$ if $N\cong\lambda_{R/\fa}(K,N)$.
\begin{thm}\label{thB}
Let $R$ be a Cohen-Macaulay local ring with canonical module $\omega_R$ and let $\fa$ be a Cohen-Macaulay quasi-Gorenstein ideal of grade $n$,  $\overline{R}=R/\fa$.
There is an adjoint equivalence
$\left\{\begin{array}{lll}
M\in\mathcal{A}_{\omega_R} {\bigg |} \begin{array}{lll} M\mathrm{\ is\ self\ linked }\\  
\mathrm{\ by\ the\ ideal}\hspace{0.25cm} \fa \\
\end{array} \end{array}
\right\}
\begin{array}{ll}\overset{-\otimes_{\overline{R}}\omega_{\overline{R}}}{\xrightarrow{\hspace*{2cm}}}\\ \underset{\Hom_{\overline{R}}(\omega_{\overline{R}},-)}{\xleftarrow{\hspace*{2cm}}}\\ \end{array}
\left\{\begin{array}{lll}
N\in\mathcal{B}_{\omega_R} {\bigg |} \begin{array}{lll} N\mathrm{\ is\ self\ linked\ by\ the\ ideal }\\ 
\fa
\mathrm{\ with\ respect\ to  }\hspace{0.25cm} \omega_{\overline{R}} \end{array}\end{array}
\right\}.$
\end{thm}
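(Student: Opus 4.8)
The plan is to obtain this from the Foxby equivalence, tracking the self-linkage condition along it. By Theorem~\ref{ex}, an $\overline{R}$--module lies in $\mathcal{A}_{\omega_R}$ (resp.\ $\mathcal{B}_{\omega_R}$) exactly when it lies in $\mathcal{A}_{\omega_{\overline{R}}}$ (resp.\ $\mathcal{B}_{\omega_{\overline{R}}}$), and by Definition~\ref{def1} the adjunction $-\otimes_{\overline{R}}\omega_{\overline{R}}\dashv\Hom_{\overline{R}}(\omega_{\overline{R}},-)$ has invertible unit and counit on $\mathcal{A}_{\omega_{\overline{R}}}$ and $\mathcal{B}_{\omega_{\overline{R}}}$, hence restricts to an adjoint equivalence between these two classes (cf.\ Fact~\ref{example1}(iv)). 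Since the self-linked objects form full subcategories, the theorem reduces to showing that $-\otimes_{\overline{R}}\omega_{\overline{R}}$ and $\Hom_{\overline{R}}(\omega_{\overline{R}},-)$ send self-linked objects to self-linked objects. Note that, unlike in the proof of Theorem~\ref{thA}, one cannot route this through Theorem~\ref{tt} or Corollary~\ref{c1}: a self-linked module in $\mathcal{A}_{\omega_R}$ need not be Cohen--Macaulay (for instance $(x,y)$ over $k[[x,y]]$ is self-linked by the zero ideal and is not Cohen--Macaulay).

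The crux is the identity $\lambda_{\overline{R}}(\omega_{\overline{R}},M\otimes_{\overline{R}}\omega_{\overline{R}})\cong(\lambda_{\overline{R}}M)\otimes_{\overline{R}}\omega_{\overline{R}}$ for $M\in\mathcal{A}_{\omega_{\overline{R}}}$ with $\underline{\Hom}_{\overline{R}}(M,\omega_{\overline{R}})=0$. Indeed, with $N=M\otimes_{\overline{R}}\omega_{\overline{R}}$ the Auslander-class condition gives $N^{\curlyvee}\cong M$, so $\lambda_{\overline{R}}(\omega_{\overline{R}},N)\cong\Omega_{\omega_{\overline{R}}}\Tr_{\omega_{\overline{R}}}(M)$, and Remark~\ref{remark3}(iii) identifies this with $(\lambda_{\overline{R}}M)\otimes_{\overline{R}}\omega_{\overline{R}}$ once $\underline{\Hom}_{\overline{R}}(M,\omega_{\overline{R}})\cong\Tor_1^{\overline{R}}(\Tr_{\overline{R}}M,\omega_{\overline{R}})$ vanishes, by (\ref{def2}.1). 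For the forward direction, let $M\in\mathcal{A}_{\omega_R}$ be self-linked by $\fa$; then $\lambda_{\overline{R}}M\cong M$, so $\lambda_{\overline{R}}M\in\mathcal{A}_{\omega_{\overline{R}}}$, and applying Fact~\ref{example1}(i) to the exact sequence $0\to\lambda_{\overline{R}}M\to(P_1)^{*}\to\Tr_{\overline{R}}M\to0$ attached to a minimal presentation $P_1\to P_0\to M\to0$ gives $\Tr_{\overline{R}}M\in\mathcal{A}_{\omega_{\overline{R}}}$, hence the required $\Tor$-vanishing. The identity then yields $\lambda_{\overline{R}}(\omega_{\overline{R}},N)\cong(\lambda_{\overline{R}}M)\otimes_{\overline{R}}\omega_{\overline{R}}\cong M\otimes_{\overline{R}}\omega_{\overline{R}}=N$, so $N\in\mathcal{B}_{\omega_R}$ is self-linked by $\fa$ with respect to $\omega_{\overline{R}}$.

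For the converse, let $N\in\mathcal{B}_{\omega_R}$ be self-linked by $\fa$ with respect to $\omega_{\overline{R}}$ and set $M=N^{\curlyvee}$, so $M\in\mathcal{A}_{\omega_{\overline{R}}}$ and $M\otimes_{\overline{R}}\omega_{\overline{R}}\cong N$. By the exact sequence (\ref{e}) and Remark~\ref{remark3}(i), the relation $N\cong\lambda_{\overline{R}}(\omega_{\overline{R}},N)$ places $N$ in an exact sequence $0\to N\to(P_1)^{*}\otimes_{\overline{R}}\omega_{\overline{R}}\to\Tr_{\overline{R}}M\otimes_{\overline{R}}\omega_{\overline{R}}\to0$ coming from a minimal presentation $P_1\to P_0\to M\to0$. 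The first two terms lie in $\mathcal{B}_{\omega_{\overline{R}}}$ (the middle one being $\omega_{\overline{R}}$--projective, so in $\mathcal{B}_{\omega_{\overline{R}}}$ by Fact~\ref{example1}(iii)), whence $\Tr_{\overline{R}}M\otimes_{\overline{R}}\omega_{\overline{R}}\in\mathcal{B}_{\omega_{\overline{R}}}$ and $\Tr_{\overline{R}}M\in\mathcal{A}_{\omega_{\overline{R}}}$ by Fact~\ref{example1}(i),(iv). Applying $\Hom_{\overline{R}}(\omega_{\overline{R}},-)$ to the sequence---which stays exact since $N\in\mathcal{B}_{\omega_{\overline{R}}}$---and using $(P\otimes_{\overline{R}}\omega_{\overline{R}})^{\curlyvee}\cong P$ for $P$ free, $(\Tr_{\overline{R}}M\otimes_{\overline{R}}\omega_{\overline{R}})^{\curlyvee}\cong\Tr_{\overline{R}}M$, together with the naturality of the evaluation map $\mu$ to identify the induced map $(P_1)^{*}\to\Tr_{\overline{R}}M$ with that of the minimal presentation, one obtains the exact sequence $0\to M\to(P_1)^{*}\to\Tr_{\overline{R}}M\to0$. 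Hence $M\cong\Omega_{\overline{R}}\Tr_{\overline{R}}M=\lambda_{\overline{R}}M$, i.e.\ $M\in\mathcal{A}_{\omega_R}$ is self-linked by $\fa$. Combining the two directions with the Foxby equivalence proves the asserted adjoint equivalence.

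I expect the main obstacle to be exactly the Auslander-class membership of $\Tr_{\overline{R}}M$---equivalently the vanishing $\underline{\Hom}_{\overline{R}}(M,\omega_{\overline{R}})=0$---in each direction: since $M$ may have positive finite Gorenstein dimension, there is no shortcut via $\gd_{\overline{R}}M=0$, and self-linkage has to be used essentially, through $\lambda_{\overline{R}}M\cong M$ in the forward direction and through the $\omega_{\overline{R}}$--syzygy presentation of $N$ in the backward direction. A secondary point requiring care is keeping the identifications in the converse compatible with a minimal projective presentation of $M$, which is handled by the naturality of the maps underlying $\mu$.
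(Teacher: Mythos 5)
Your proposal is correct and follows essentially the same route as the paper: both directions hinge on establishing $\Tr_{\overline{R}}M\in\mathcal{A}_{\omega_{\overline{R}}}$ (hence $\underline{\Hom}_{\overline{R}}(M,\omega_{\overline{R}})=0$), in the forward direction via the exact sequence attached to $M\cong\lambda_{\overline{R}}M$ and in the converse via the exact sequence (\ref{e}) for the self-linked $N$, and then invoking Remark \ref{remark3}(iii) together with the Auslander/Bass adjunction. The only cosmetic difference is that in the converse you apply $\Hom_{\overline{R}}(\omega_{\overline{R}},-)$ directly to the sequence (\ref{e}) to read off $M\cong\lambda_{\overline{R}}M$, whereas the paper first writes $N\cong\lambda_{\overline{R}}M\otimes_{\overline{R}}\omega_{\overline{R}}$ and then notes $\lambda_{\overline{R}}M\in\mathcal{A}_{\omega_{\overline{R}}}$; these are the same computation.
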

\begin{proof}
Let $M\in\mathcal{A}_{\omega_R}$ and let $M\cong\lambda_{\overline{R}}M$.
It follows from the Theorem \ref{ex} that $M\in\mathcal{A}_{\omega_{\overline{R}}}$.
Set $N=M\otimes_{\overline{R}}\omega_{\overline{R}}$.
Therefore,
\begin{equation}\tag{\ref{thB}.1}
M\cong\Hom_{\overline{R}}(\omega_{\overline{R}},N)
\end{equation}
As $M\cong\Omega_{\overline{R}}\Tr_{\overline{R}}M$,
$\Tr_{\overline{R}}M\in\mathcal{A}_{\omega_{\overline{R}}}$.
Hence, $\underline{\Hom}_{\overline{R}}(M,\omega_{\overline{R}})\cong\Tor_1^{\overline{R}}(\Tr_{\overline{R}}M,\omega_{\overline{R}})=0$.
It follows from the (\ref{thB}.1) and Remark \ref{remark3}(iii) that 
\[\begin{array}{rl}
\lambda_{\overline{R}}(\omega_{\overline{R}},N)&=\Omega_{\omega_{\overline{R}}}\Tr_{\omega_{\overline{R}}}(\Hom_{\overline{R}}(\omega_{\overline{R}},N))\\
&\cong\Omega_{\omega_{\overline{R}}}\Tr_{\omega_{\overline{R}}}(M)\\
&\cong\lambda_{\overline{R}}M\otimes_{\overline{R}}\omega_{\overline{R}}\\
&\cong M\otimes_{\overline{R}}\omega_{\overline{R}}=N.
\end{array}\]
In other words, $N$ is self-linked by the ideal $\fa$ with respect to $\omega_{\overline{R}}$. Also, by Fact \ref{example1}(iv), Theorem \ref{ex}, $N\in\mathcal{B}_{\omega_{R}}$.

Conversely, assume that $N\in\mathcal{B}_{\omega_{R}}$ which is self-linked by the ideal $\fa$ with respect to $\omega_{\overline{R}}$. Set $M=\Hom_{\overline{R}}(\omega_{\overline{R}},N)$. It follows from the Fact \ref{example1}(iv), Theorem \ref{ex} that $M\in\mathcal{A}_{\omega_{R}}$. As $N\cong\lambda_{\overline{R}}(\omega_{\overline{R}},N)$, $\Tr_{\omega_{\overline{R}}}(M)\in\mathcal{B}_{\omega_{\overline{R}}}$ by the exact sequence (\ref{e}), Fact \ref{example1}(i) and Theorem \ref{ex}.
It follows from the Remark \ref{remark3}(i) and Fact \ref{example1}(iv) that
$\Tr_{\overline{R}}(M)\in\mathcal{A}_{\omega_{\overline{R}}}$. Therefore $\underline{\Hom}_{\overline{R}}(M,\omega_{\overline{R}})\cong
\Tor_1^{\overline{R}}(\Tr_{\overline{R}}(M)\omega_{\overline{R}})=0$. Hence, by Remark \ref{remark3}(iii),
\begin{equation}\tag{\ref{thB}.2}
N\cong\lambda_{\overline{R}}(\omega_{\overline{R}},N)\cong
\lambda_{\overline{R}}(M)\otimes_{\overline{R}}
\omega_{\overline{R}}.
\end{equation}
As $\Tr_{\overline{R}}(M)\in\mathcal{A}_{\omega_{\overline{R}}}$, $\lambda_{\overline{R}}M\in\mathcal{A}_{\omega_{\overline{R}}}$. Hence
\begin{equation}\tag{\ref{thB}.3}
\lambda_{\overline{R}}M\cong\Hom_{\overline{R}}(\omega_{\overline{R}},
\lambda_{\overline{R}}M\otimes_{\overline{R}}\omega_{\overline{R}})
\end{equation}
It follows from (\ref{thB}.2) and (\ref{thB}.3) that $M\cong\lambda_{\overline{R}}M$. 
\end{proof}
\section{Serre condition and vanishing of local cohomology}
In this Section, for a linked module, we study the relation between
the Serre condition $\sn_n$ with the vanishing of certain relative cohomology modules of its linked module.
As a consequence, the result of Schenzel \cite[Theorem 4.1]{Sc} is generalized. 
We start by the following lemma which will be used in the proof of Theorem \ref{t3}.
\begin{lem}\label{l1}
Let $M$ be a $C$-syzygy module. Then $\Ext^1_R(\trk(M^\curlyvee),C)=0$. In particular, if $M$ is horizontally linked with respect to $C$, then $\Ext^1_R(\trk(M^\curlyvee),C)=0$.
\end{lem}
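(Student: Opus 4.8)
The plan is to extract the vanishing of $\Ext^1_R(\trk(M^\curlyvee),C)$ directly from the exact sequence (\ref{d3}.2) applied to $M^\curlyvee$, reading ``$C$-syzygy'' as giving a monomorphism $M^\curlyvee \hookrightarrow (\text{free})$ after applying $(-)^\curlyvee$. First I would recall that if $M$ is a $C$-syzygy, say $0\to M\to P\otimes_R C$ with $P$ projective, then applying the functor $(-)^\curlyvee=\Hom_R(C,-)$ yields an exact sequence $0\to M^\curlyvee\to (P\otimes_R C)^\curlyvee\cong P$, using that $\Hom_R(C,C)=R$; hence $M^\curlyvee$ is a (genuine) first syzygy module in the usual sense, i.e.\ it embeds in a projective $R$-module. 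By Theorem \ref{MS} (in the direction: stable $+$ syzygy $\Leftrightarrow$ $\Ext^1_R(\Tr M,R)=0$), or more directly by the exact sequence (\ref{1.1}) defining $\Tr$, being a first syzygy forces $\Ext^1_R(\Tr(M^\curlyvee),R)=0$.

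Next I would translate this $R$-dual vanishing into the $C$-relative statement. The bridge is Remark \ref{remark3}(i): for the minimal projective presentation $P_1\xrightarrow{f}P_0\to M^\curlyvee\to 0$ one has $\trk(M^\curlyvee)\cong \Tr(M^\curlyvee)\otimes_R C$. However, $\Ext^1_R(\Tr(M^\curlyvee)\otimes_R C, C)$ is not literally the same as $\Ext^1_R(\Tr(M^\curlyvee),R)$, so the cleanest route is instead the defining four-term sequence (\ref{d3}.1) for $\trk(M^\curlyvee)$ together with the exact sequence (\ref{d3}.2) for the module $M^\curlyvee$:
\[
0\to \Ext^1_R(\trk(M^\curlyvee),C)\to M^\curlyvee\to M^{\curlyvee\triangledown\triangledown}\to\Ext^2_R(\trk(M^\curlyvee),C)\to 0.
\]
So $\Ext^1_R(\trk(M^\curlyvee),C)=\Ker\bigl(M^\curlyvee\to M^{\curlyvee\triangledown\triangledown}\bigr)$, the kernel of the biduality map $M^\curlyvee\to \Hom_R(\Hom_R(M^\curlyvee,C),C)$. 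Thus it suffices to show this map is injective, and I would do that by a standard depth/associated-prime argument: since $M^\curlyvee$ is a first syzygy (embeds in a free module), it is torsionless, hence the natural map $M^\curlyvee\to M^{\curlyvee**}=\Hom_R(\Hom_R(M^\curlyvee,R),R)$ is injective; then comparing with the $C$-biduality via the isomorphism $\Hom_R(M^\curlyvee,R)\cong\Hom_R(M^\curlyvee,C^\curlyvee)\cong\Hom_R(M^\curlyvee\otimes_R C,C)$-type identifications (Convention \ref{con} and the semidualizing property $R\cong\Hom_R(C,C)$) gives injectivity of $M^\curlyvee\to M^{\curlyvee\triangledown\triangledown}$ as well. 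Concretely: an element of $\Ker(M^\curlyvee\to M^{\curlyvee\triangledown\triangledown})$ localizes to zero at every associated prime of $M^\curlyvee$ because at such primes $C$ still detects nonzero elements (a semidualizing module has full support), so the kernel has no associated primes and hence vanishes.

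Finally, the ``in particular'' clause is immediate: if $M$ is horizontally linked with respect to $C$, then by definition $M\cong \lambda_R^2(C,M)$, and in particular (see the exact sequence (\ref{e}) and the discussion following it) $M$ is a $C$-syzygy module, so the first part applies verbatim. I expect the only real subtlety to be the bookkeeping in the middle paragraph — identifying $\Ext^1_R(\trk(M^\curlyvee),C)$ with the kernel of $C$-biduality on $M^\curlyvee$ and then checking that the first-syzygy property of $M^\curlyvee$ kills that kernel — while everything else is formal manipulation of the sequences (\ref{d3}.1), (\ref{d3}.2), and (\ref{1.1}) together with the defining properties of a semidualizing module.
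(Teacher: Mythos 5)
Your proposal diverges from the paper's proof after the (correct, and identical) first step.  Both arguments begin by applying $(-)^\curlyvee$ to $0\to M\to P\otimes_R C$ and using $\Hom_R(C,P\otimes_R C)\cong P$ to conclude that $M^\curlyvee$ is a genuine first syzygy, hence $\Ext^1_R(\Tr(M^\curlyvee),R)=0$.  From there the paper runs the third-quadrant spectral sequence $\Ext^p_R(\Tor_q^R(\Tr(M^\curlyvee),C),C)\Rightarrow\Ext^{p+q}_R(\Tr(M^\curlyvee),R)$ and reads off, from the five-term exact sequence of low-degree terms, an injection $\Ext^1_R(\Tr(M^\curlyvee)\otimes_RC,C)\hookrightarrow\Ext^1_R(\Tr(M^\curlyvee),R)=0$, then identifies $\Tr(M^\curlyvee)\otimes_RC\cong\trk(M^\curlyvee)$ via Remark~\ref{remark3}(i).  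You instead invoke the four-term sequence (\ref{d3}.2) to identify $\Ext^1_R(\trk(M^\curlyvee),C)$ with $\Ker\bigl(M^\curlyvee\to M^{\curlyvee\triangledown\triangledown}\bigr)$ and then try to show that kernel vanishes.  That route is legitimate and, once cleaned up, arguably more elementary (no spectral sequence); the two approaches buy different things: the paper's is a one-shot comparison of $R$- and $C$-relative $\Ext$ groups, yours makes the geometric content (a $C$-torsionlessness statement) visible.

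The gap is in how you establish injectivity of $M^\curlyvee\to M^{\curlyvee\triangledown\triangledown}$.  The chain $\Hom_R(M^\curlyvee,R)\cong\Hom_R(M^\curlyvee,C^\curlyvee)\cong\Hom_R(M^\curlyvee\otimes_RC,C)$ is a correct adjunction but is a red herring: it compares $\Hom_R(M^\curlyvee,R)$ with $\Hom_R(M^\curlyvee\otimes_RC,C)$, not with $\Hom_R(M^\curlyvee,C)$, so it does not transfer $R$-torsionlessness to $C$-torsionlessness.  Your closing ``localizes to zero at associated primes because $C$ has full support'' argument is also not quite right as stated, since localizing at an associated prime just reproduces the same problem over $R_\fp$.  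The clean argument you want is the faithfulness of a semidualizing module: $\ann_R(C)\subseteq\Ker(R\to\Hom_R(C,C))=0$.  Given a nonzero $x\in M^\curlyvee$, torsionlessness yields $f\in\Hom_R(M^\curlyvee,R)$ with $f(x)\neq0$; faithfulness of $C$ gives $f(x)C\neq0$, so there is $c\in C$ with $f(x)c\neq0$; then $y\mapsto f(y)c$ lies in $\Hom_R(M^\curlyvee,C)$ and does not kill $x$.  Hence $M^\curlyvee\to M^{\curlyvee\triangledown\triangledown}$ is injective and $\Ext^1_R(\trk(M^\curlyvee),C)=0$.  With that repair, your alternative proof is correct; the rest of your write-up (the ``in particular'' clause, and the reduction to $M^\curlyvee$ being a first syzygy) is fine.
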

\begin{proof}
Consider the exact sequence $0\rightarrow M\rightarrow P\otimes_RC$, where $P$ is a projective $R$--module.
Applying the functor $(-)^\curlyvee$ to the above exact sequence, we get the following exact sequence $0\rightarrow M^\curlyvee\rightarrow P$.
Therefore, $\Ext^1_R(\Tr M^\curlyvee,R)=0$. By \cite[Theorem 10.62]{R}, there is a third quadrant spectral sequence
$$\E^{p,q}_2=\Ext^p_R(\Tor_q^R(\Tr(M^{\curlyvee}),C),C)\Rightarrow\Ext^{p+q}_R(\Tr(M^\curlyvee),R).$$
Hence we obtain the following exact sequence $$0\rightarrow\Ext^1_R(\Tr(M^\curlyvee)\otimes_RC,C)\rightarrow\Ext^1_R(\Tr(M^\curlyvee),R),$$
by \cite[Theorem 10.33]{R}. Hence, by Remark \ref{remark3}, $$\Ext^1_R(\trk(M^\curlyvee),C)\cong\Ext^1_R(\Tr(M^\curlyvee)\otimes_RC,C)=0.$$ 
\end{proof}
The following is a generalization of \cite[Theorem 1]{MS}.
\begin{thm}\label{t3}
Let $M$ be an $R$--module which is horizontally linked with respect to $C$. Assume that
$M\in\bc$. Then $\gkd_R(M)=0$ if and only if $\gd_R((\lambda_R(C,M))^\curlyvee)=0$.
\end{thm}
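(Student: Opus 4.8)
The plan is to transfer the statement, through the equivalences $-\otimes_RC$ and $(-)^\curlyvee$ between $\ac$ and $\bc$, to the classical linkage theorem \cite[Theorem 1]{MS}. Since $M\in\bc$, Lemma \ref{lwsw} already gives $\gkd_R(M)=0\iff\gd_R(M^\curlyvee)=0$, so it is enough to show that $\gd_R(M^\curlyvee)=0$ if and only if $\gd_R\bigl((\lambda_R(C,M))^\curlyvee\bigr)=0$. First I would record that $M^\curlyvee$ is horizontally linked in the ordinary sense: as a value of $\lambda_R(C,-)$ the module $M\cong\lambda_R^2(C,M)$ is $C$-stable and a $C$-syzygy, and $M\cong C\otimes_RM^\curlyvee$ because $M\in\bc$; applying $(-)^\curlyvee$ to an embedding $0\to M\to P\otimes_RC$ with $P$ projective and arguing exactly as in the proof of Lemma \ref{lem3} shows that $M^\curlyvee$ is stable and a first syzygy, hence horizontally linked by Theorem \ref{MS}.

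Next I would identify $(\lambda_R(C,M))^\curlyvee$ with $\lambda(M^\curlyvee)$. Fix a minimal projective presentation $P_1\xrightarrow{f}P_0\xrightarrow{g}M^\curlyvee\to0$; by Remark \ref{remark3}(ii), applied to $M^\curlyvee$, there is an exact sequence $0\to(M^\curlyvee)^\triangledown\xrightarrow{g^\triangledown}P_0^\triangledown\to\lambda_R(C,M)\to0$. Applying $(-)^\curlyvee=\Hom_R(C,-)$ and using Hom--tensor adjunction to rewrite $\Hom_R(C,(M^\curlyvee)^\triangledown)\cong\Hom_R(C\otimes_RM^\curlyvee,C)\cong(M^\curlyvee)^*$ and $\Hom_R(C,P_0^\triangledown)\cong P_0^*$ (so that $g^\triangledown$ becomes $g^*$), one obtains, once it is known that $\Hom_R(C,-)$ keeps this sequence exact, an exact sequence $0\to(M^\curlyvee)^*\xrightarrow{g^*}P_0^*\to(\lambda_R(C,M))^\curlyvee\to0$ whose cokernel term is exactly $\lambda(M^\curlyvee)$. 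Thus $(\lambda_R(C,M))^\curlyvee\cong\lambda(M^\curlyvee)$, and combining this with the previous paragraph, with \cite[Theorem 1]{MS}, and with Lemma \ref{lwsw} settles the forward implication; the reverse one I would obtain by running the same reasoning with $\lambda_R(C,M)$ in place of $M$, noting that $\lambda_R(C,M)$ is again horizontally linked with respect to $C$ since $M\cong\lambda_R^2(C,M)$.

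The delicate step is the exactness of $\Hom_R(C,-)$ on the above sequence, i.e. the vanishing of $\Ext^1_R\bigl(C,(M^\curlyvee)^\triangledown\bigr)$. Here $M^\curlyvee\in\ac$ (Fact \ref{example1}(iv)), so $\Tor^R_{>0}(M^\curlyvee,C)=0$, which forces $\Hom_R(M^\curlyvee,-)$ to carry injective modules to $\Hom_R(C,-)$-acyclic ones; hence there is a Grothendieck spectral sequence $\Ext^p_R\bigl(C,\Ext^q_R(M^\curlyvee,C)\bigr)\Rightarrow\Ext^{p+q}_R(C\otimes_RM^\curlyvee,C)\cong\Ext^{p+q}_R(M,C)$, and its low-degree exact sequence embeds $\Ext^1_R\bigl(C,(M^\curlyvee)^\triangledown\bigr)$ into $\Ext^1_R(M,C)$. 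In the forward direction I may assume $\gd_R(M^\curlyvee)=0$, and then $\Ext^1_R(M,C)\cong\Ext^1_R(M^\curlyvee,R)=0$ (the isomorphism being the Auslander-class form of adjunction, cf. \cite[Theorem 4.1]{TW}), so the needed vanishing comes for free; the companion identity $\Ext^1_R(\trk(M^\curlyvee),C)=0$ of Lemma \ref{l1} is what one feeds into the symmetric (reverse) instance, together with $M\in\bc$, to see in particular that $\lambda_R(C,M)$ again lies in $\bc$. I expect this spectral-sequence and Ext-vanishing bookkeeping — and making the reverse direction genuinely symmetric — to be the only real obstacle; the rest is formal manipulation inside the Auslander and Bass classes.
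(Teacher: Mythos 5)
Your strategy — reduce to the classical Martsinkovsky--Strooker result by identifying $(\lambda_R(C,M))^\curlyvee$ with $\lambda(M^\curlyvee)$ — is genuinely different from the paper's, and the forward direction is essentially correct: $M^\curlyvee$ is horizontally linked, the spectral sequence $\Ext^p_R(C,\Ext^q_R(M^\curlyvee,C))\Rightarrow\Ext^{p+q}_R(M,C)$ does give $\Ext^1_R(C,(M^\curlyvee)^\triangledown)\hookrightarrow\Ext^1_R(M,C)$, and $\gd_R(M^\curlyvee)=0$ kills $\Ext^1_R(M,C)\cong\Ext^1_R(M^\curlyvee,R)$, so the comparison map to $\lambda(M^\curlyvee)$ is an isomorphism and \cite[Theorem~1]{MS} finishes.

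The gap is in the reverse direction, and it is exactly at the point you flag as ``delicate.'' To run the symmetric argument with $N:=\lambda_R(C,M)$ in place of $M$, you need $N\in\bc$ (equivalently $N^\curlyvee\in\ac$, which is what makes the spectral sequence for $N$ available). You assert that $N\in\bc$ follows from $M\in\bc$ and $\Ext^1_R(\trk(M^\curlyvee),C)=0$, but I don't see how. From the exact sequence $0\to N\to Q^*\otimes_RC\to\trk(M^\curlyvee)\to0$ and the two-out-of-three property of $\bc$, one has $N\in\bc$ if and only if $\trk(M^\curlyvee)\in\bc$, i.e.\ $\Tr(M^\curlyvee)\in\ac$, and neither $M\in\bc$ nor the $\Ext^1$ vanishing gives this; the $\Ext^1$ vanishing is an assertion about $\Ext_R(-,C)$, not about the $\Tor$/$\Ext$ conditions defining membership in $\ac$. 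What $M\in\bc$ \emph{does} give, via $M\cong\lambda_R(C,N)$ and the corresponding sequence $0\to M\to P^*\otimes_RC\to\trk(N^\curlyvee)\to0$, is that $\trk(N^\curlyvee)\in\bc$ — a different module, on the ``wrong'' side for your symmetry. The paper's proof exploits precisely this asymmetry: it works only with the sequence $0\to M\to P^*\otimes_RC\to\trk(N^\curlyvee)\to0$, derives $\trk(N^\curlyvee)\in\bc$ and $(\trk(N^\curlyvee))^\curlyvee\cong\Tr(N^\curlyvee)$ (from $\Tr(N^\curlyvee)\in\ac$), so that Lemma~\ref{lwsw} gives $\gkd_R(\trk(N^\curlyvee))=0\Leftrightarrow\gd_R(\Tr(N^\curlyvee))=0\Leftrightarrow\gd_R(N^\curlyvee)=0$, and then transfers $\gc$-dimension zero between $M$ and $\trk(N^\curlyvee)$ across that short exact sequence (the required $\Ext^1_R(\trk(N^\curlyvee),C)=0$ is automatic by Lemma~\ref{l1} applied to $N$). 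This proves both implications at once and never needs $N\in\bc$ or the identification $(\lambda_R(C,M))^\curlyvee\cong\lambda(M^\curlyvee)$, which — as your own derivation shows — is only available once one already knows the relevant $\Ext^1$ vanishes. Unless you can actually prove $\trk(M^\curlyvee)\in\bc$ under the stated hypotheses (and I don't believe it holds in general without extra assumptions of the kind appearing in Corollary~\ref{c1}), the reverse implication remains open in your approach.
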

\begin{proof}
Set $N=\lambda_R(C,M)$.	
Consider the following exact sequence 
\begin{equation}\tag{\ref{t3}.1}
0\rightarrow M\rightarrow P^*\otimes_RC\rightarrow\trk(N^\curlyvee)\rightarrow0,
\end{equation}
where $P$ is a pojective $R$--module (see (\ref{e})).
As $M\in\bc$, $\trk(N^\curlyvee)\in\bc$ by the exact sequence (\ref{t3}.1) and Fact (\ref{example1})(i). Hence $\Tr(N^\curlyvee)\in\ac$ by Remark \ref{remark3} and Fact \ref{example1}(iv). In particular,
\begin{equation}\tag{\ref{t3}.2}
\Tr(N^\curlyvee)\cong\Hom_R(C,\Tr(N^\curlyvee)\otimes_RC)\cong\Hom_R(C,\trk(N^\curlyvee)).
\end{equation}
It follows from Theorem \ref{G3}(ii), Lemma \ref{lwsw} and (\ref{t3}.2) that
\begin{equation}\tag{\ref{t3}.3}
\gd_R(N^\curlyvee)=0 \text{ if and only if } \gkd_R(\trk(N^\curlyvee))=0
\end{equation}
On the other hand, by the exact sequence (\ref{t3}.1)
\begin{equation}\tag{\ref{t3}.4}
\gkd_R(M)=0 \text{ and } \Ext^1_R(\trk(N^\curlyvee),C)=0 \text{ if and only if } \gkd_R(\trk(N^\curlyvee))=0.
\end{equation}
Now the assertion is clear by (\ref{t3}.3), (\ref{t3}.4) and Lemma \ref{l1}.
\end{proof}
The class $\Pc$ is precovering and then each $R$--module $M$ has an
augmented proper $\Pc$-resolution, that is,  there is an $R$-complex
$$X^{+}=\cdots\rightarrow C\otimes_RP_1\rightarrow C\otimes_RP_0\rightarrow M\rightarrow0$$
such that $\Hom_R(Y,X^{+})$ is exact for all $Y\in\Pc$. The
truncated complex
$$X=\cdots\rightarrow C\otimes_RP_1\rightarrow C\otimes_RP_0\rightarrow0$$
is called a proper $\Pc$-projective resolution of $M$. Proper $\Pc$-projective
resolutions are unique up to homotopy equivalence.
\begin{dfn}\cite{TW}\label{TWD}
Let $M$ and $N$ be
$R$--modules. The $n$th relative cohomology modules is defined as
$\Ext^n_{\Pc}(M,N)= \hh^n\Hom_R(X,N)$, where $X$ is a proper
$\Pc$-projective resolution of $M$.
\end{dfn}
\begin{thm}\label{t2}\cite[Theorem 4.1 and Corollary 4.2]{TW}
Let $M$ and $N$ be
$R$--modules. Then there exists an isomorphism
$$\Ext^i_{\Pc}(M,N)\cong\Ext^i_R(M^\curlyvee, N^\curlyvee)$$ for all $i\geq0$. Moreover, if $M$ and $N$ are in $\mathcal{B}_C$ then
$\Ext^i_{\Pc}(M,N)\cong\Ext^i_R(M,N)$ for all $i\geq0$.
\end{thm}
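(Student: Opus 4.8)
The plan is to apply the functor $(-)^{\curlyvee}=\Hom_R(C,-)$ to a proper $\Pc$-projective resolution of $M$ and to translate everything back into ordinary homological algebra over $R$ via the Hom--tensor adjunction, using the Auslander and Bass class formalism from Definition \ref{def1} and Fact \ref{example1}.

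First I would fix a proper $\Pc$-projective resolution $X=\cdots\to C\otimes_RP_1\to C\otimes_RP_0\to 0$ of $M$, with augmentation $X^{+}=\cdots\to C\otimes_RP_1\to C\otimes_RP_0\to M\to 0$. Since $C=R\otimes_RC\in\Pc$, the properness of $X^{+}$ says exactly that $\Hom_R(C,X^{+})$ is exact. As every projective module lies in $\ac$ (Fact \ref{example1}(i)), the canonical maps $P_i\to\Hom_R(C,C\otimes_RP_i)$ are isomorphisms, so $\Hom_R(C,X^{+})$ is identified functorially with $\cdots\to P_1\to P_0\to M^{\curlyvee}\to 0$; hence $X^{\curlyvee}:=\Hom_R(C,X)$ is a genuine projective resolution of $M^{\curlyvee}$. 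On the other hand, Hom--tensor adjunction gives natural isomorphisms $\Hom_R(C\otimes_RP_i,N)\cong\Hom_R(P_i,N^{\curlyvee})$ compatible with the differentials, so $\Hom_R(X,N)\cong\Hom_R(X^{\curlyvee},N^{\curlyvee})$ as complexes. Taking cohomology yields $\Ext^i_{\Pc}(M,N)=\hh^i\Hom_R(X,N)\cong\hh^i\Hom_R(X^{\curlyvee},N^{\curlyvee})=\Ext^i_R(M^{\curlyvee},N^{\curlyvee})$, which is the first assertion.

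For the ``moreover'' clause, assume $M,N\in\bc$. Then $M^{\curlyvee}\in\ac$ by Fact \ref{example1}(iv), so $\Tor_i^R(C,M^{\curlyvee})=0$ for $i>0$; hence, for a projective resolution $Q_{\bullet}\to M^{\curlyvee}$, the augmented complex $C\otimes_RQ_{\bullet}\to C\otimes_RM^{\curlyvee}$ stays exact and, since $C\otimes_RM^{\curlyvee}\cong M$, it is a resolution of $M$ by $C$-projective modules. The substantive point is the lemma that $\Ext^i_R(C\otimes_RP,N)=0$ for all $i>0$ and all projective $P$ when $N\in\bc$: granting it, the resolution $C\otimes_RQ_{\bullet}$ is $\Hom_R(-,N)$-acyclic, hence computes $\Ext^{\bullet}_R(M,N)$, while adjunction identifies $\Hom_R(C\otimes_RQ_{\bullet},N)$ with $\Hom_R(Q_{\bullet},N^{\curlyvee})$; this gives $\Ext^i_R(M,N)\cong\Ext^i_R(M^{\curlyvee},N^{\curlyvee})$, and combining with the first part yields $\Ext^i_{\Pc}(M,N)\cong\Ext^i_R(M,N)$.

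To prove the lemma I would resolve $N$ by injectives $N\to I^{\bullet}$ and use adjunction to write $\Ext^i_R(C\otimes_RP,N)\cong\hh^i\Hom_R(P,\Hom_R(C,I^{\bullet}))$. Since $N\in\bc$ forces $\Ext^i_R(C,N)=0$ for $i>0$, the complex $\Hom_R(C,I^{\bullet})$ is a resolution of $N^{\curlyvee}$; and because $P$ is projective the functor $\Hom_R(P,-)$ is exact, hence commutes with cohomology, so $\hh^i\Hom_R(P,\Hom_R(C,I^{\bullet}))=0$ for $i>0$. I expect this lemma, and the care needed to confirm that the $C$-projective resolution genuinely computes absolute $\Ext_R$, to be the main obstacle; the remainder is formal adjunction together with the defining properties of $\ac$ and $\bc$.
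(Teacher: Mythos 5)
The paper does not prove this result; it cites Takahashi--White (\cite[Theorem 4.1 and Corollary 4.2]{TW}) and moves on. So there is no in-paper proof to compare against, and I am evaluating your argument on its own.

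Your proof is correct, and the two halves are handled cleanly. For the first isomorphism the key observations are all in order: properness of $X^{+}$ applied to the single test object $C\in\Pc$ gives exactness of $\Hom_R(C,X^{+})$; $P_i\in\ac$ identifies $\Hom_R(C,C\otimes_RP_i)$ with $P_i$ so that $\Hom_R(C,X)$ is a genuine projective resolution of $M^{\curlyvee}$; and Hom--tensor adjunction makes $\Hom_R(X,N)\cong\Hom_R(\Hom_R(C,X),N^{\curlyvee})$ an isomorphism of complexes. For the ``moreover'' part your lemma is also right: with $N\in\bc$ one has $\Ext^i_R(C,N)=0$ for $i>0$, so $\Hom_R(C,I^{\bullet})$ is exact in positive degrees, and exactness of $\Hom_R(P,-)$ for projective $P$ lets cohomology pass through. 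The $\ac$-condition on $M^{\curlyvee}$ gives both $\Tor_i^R(C,M^{\curlyvee})=0$ for $i>0$ and $C\otimes_RM^{\curlyvee}\cong M$, so $C\otimes_RQ_{\bullet}\to M$ is a resolution; combined with the lemma it is acyclic for $\Hom_R(-,N)$ and therefore computes $\Ext^{\bullet}_R(M,N)$. Adjunction then turns $\Hom_R(C\otimes_RQ_{\bullet},N)$ into $\Hom_R(Q_{\bullet},N^{\curlyvee})$, whose cohomology is $\Ext^{\bullet}_R(M^{\curlyvee},N^{\curlyvee})$. Putting the two halves together gives the stated result. This is essentially the argument one expects to find in Takahashi--White (their Theorem 4.1 is a Foxby-equivalence/adjunction statement of exactly this shape, and their Corollary 4.2 specializes to $\bc$), so your reconstruction is both valid and faithful to the standard route.

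One small stylistic remark: for the second part it would be slightly tidier to observe that $C\otimes_RQ_{\bullet}$ is in fact a \emph{proper} $\Pc$-projective resolution of $M$ (the properness follows because $\Hom_R(C\otimes_RP',C\otimes_RQ_{\bullet}\to M)\cong\Hom_R(P',Q_{\bullet}\to M^{\curlyvee})$ by adjunction and the $\ac$-property of $P'$, and the latter is exact since $Q_{\bullet}$ is a projective resolution). This lets you read off $\Ext^i_{\Pc}(M,N)$ directly from the same resolution, giving both comparisons at once rather than going through the first isomorphism twice. But this is a matter of economy, not correctness.
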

For a positive integer $n$, a module $M$ is called an $n$th
$C$-syzygy module if there is an exact sequence $0\rightarrow
M\rightarrow C_1\rightarrow C_2\rightarrow\ldots\rightarrow C_n$,
where $C_i\in\Pc$ for all $i$. The following results will be used in the proof of Theorem \ref{th2}.
\begin{lem}\label{le}
	Let $M$ be an $R$--module such that $\gcpd(M_\fp)<\infty$ for all $\fp\in X^{n-2}(R)$. Then the following statements are equivalent.
	\begin{itemize}
		\item[(i)]{$M$ is an $n$th $C$-syzygy module.}
		\item[(ii)]{$\Ext^i_R(\trk M,C)=0$ for $0<i<n$.}
	\end{itemize}
\end{lem}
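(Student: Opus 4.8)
The statement is the $C$-relative analogue of the Auslander--Bridger characterization of $n$th syzygy modules through vanishing of $\Ext$ of the transpose (see \cite{AB}, and \cite{M1} for the refinement under a finiteness hypothesis on Gorenstein dimension), and the plan is to transcribe that proof with the $R$-dual $(-)^{*}$ and free modules replaced by the $C$-dual $(-)^{\triangledown}=\Hom_{R}(-,C)$ and the class $\Pc$, and with $\g$-dimension replaced by $\gc$-dimension. The whole argument runs by induction on $n$; its low-degree input is the four term exact sequence $(\ref{d3}.2)$, which identifies $\Ext^{1}_{R}(\trk M,C)$ and $\Ext^{2}_{R}(\trk M,C)$ with the kernel and cokernel of the biduality map $M\to M^{\triangledown\triangledown}$. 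Since $M^{\triangledown\triangledown}=\Hom_{R}(M^{\triangledown},C)$ embeds in a module of $\Pc$ (because $M^{\triangledown}$ is a homomorphic image of a finitely generated free module), the vanishing of $\Ext^{1}_{R}(\trk M,C)$ is precisely the condition that $M$ be a first $C$-syzygy; this is the computation already made in Lemma~\ref{l1}, and it handles the base of the induction.

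For the step $(ii)\Rightarrow(i)$ the plan is to use the $C$-relative \emph{pushforward}: the vanishing $\Ext^{1}_{R}(\trk M,C)=0$ together with $(\ref{d3}.2)$ produces a short exact sequence $0\to M\to C_{0}\to M'\to 0$ with $C_{0}\in\Pc$, obtained by dualizing a surjection of a finitely generated free module onto $M^{\triangledown}$. The key point is the shift isomorphism $\Ext^{i}_{R}(\trk M',C)\cong\Ext^{i+1}_{R}(\trk M,C)$ for $i\ge 1$, proved by applying $(-)^{\triangledown}$ to that sequence, using $\Ext^{>0}_{R}(C_{0},C)=0$ since $C_{0}$ is $C$-projective, and chasing the long exact sequences together with $(\ref{d3}.1)$ for $M$ and for $M'$; it is the $C$-relative form of the classical fact that the transpose of a pushforward is, up to projective summands, the first syzygy of the transpose. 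Because $C_{0}$ has $\gc$-dimension zero, $\gcpd(M'_{\fp})<\infty$ for all $\fp\in X^{n-3}(R)\subseteq X^{n-2}(R)$ follows from the hypothesis on $M$, so the inductive hypothesis applies to $M'$ and yields $0\to M'\to C_{1}\to\cdots\to C_{n-1}$ with $C_{j}\in\Pc$; splicing with the pushforward sequence shows $M$ is an $n$th $C$-syzygy.

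For $(i)\Rightarrow(ii)$ one reads the defining exact sequence of an $n$th $C$-syzygy as $0\to M\to C_{0}\to M'\to 0$ with $C_{0}\in\Pc$ and $M'$ an $(n-1)$st $C$-syzygy, and the same shift isomorphism reduces the required vanishing of $\Ext^{i}_{R}(\trk M,C)$ for $0<i<n$ to the case $i=1$ (true since $M$ is a first $C$-syzygy) together with the vanishing of $\Ext^{i}_{R}(\trk M',C)$ for $0<i<n-1$, which is the inductive hypothesis (once one notes $\gcpd(M'_{\fp})<\infty$ for $\fp\in X^{n-3}(R)$, again from the short exact sequence). The one recurring local point is that each $\Ext^{i}_{R}(\trk M,C)$ is finitely generated, hence vanishes as soon as it vanishes at every associated prime; at $\fp\in X^{n-2}(R)$ an $n$th $C$-syzygy $M$ has $\depth_{R_{\fp}}(M_{\fp})\ge\min\{n,\depth R_{\fp}\}=\depth R_{\fp}$, so $\gcpd(M_{\fp})=\depth R_{\fp}-\depth_{R_{\fp}}(M_{\fp})=0$ by Theorem~\ref{G3}(iv); then $M_{\fp}$ is $C_{\fp}$-reflexive with $\Ext^{>0}_{R_{\fp}}(M_{\fp},C_{\fp})=0=\Ext^{>0}_{R_{\fp}}(M_{\fp}^{\triangledown},C_{\fp})$, and $(\ref{d3}.1)$ and $(\ref{d3}.2)$ force $\Ext^{i}_{R}(\trk M,C)_{\fp}=0$, while at the primes outside $X^{n-2}(R)$ the vanishing follows from the induction applied locally. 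I expect the main obstacle to be establishing the shift isomorphism cleanly and checking that finiteness of $\gc$-dimension, together with the $n$th $C$-syzygy property, descends to $M'$ with the index dropping by exactly one; with that bookkeeping in place the induction is routine.
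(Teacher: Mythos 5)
The paper offers no proof of this lemma beyond the sentence ``the proof is analogous to \cite[Theorem 43]{M1},'' so your task was really to reconstruct Ma\c{s}ek's pushforward induction in the $\gc$--setting, and that is the mechanism you describe: peel off one $\Pc$--layer at a time and shift $\Ext^\bullet_R(\trk -,C)$ by one. The approach therefore matches what the authors have in mind.

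Two things, though, need to be handled honestly rather than by analogy. First, the statement as printed is false with the range $0<i<n$. Take $C=R$, $R=k[[x,y]]$, $M=\fm=(x,y)$, $n=2$: here $\fm$ is a first syzygy, so $\Ext^1_R(\Tr\fm,R)=0$, and $\gd_{R_\fp}(\fm_\fp)<\infty$ at every prime since $R$ is regular; yet $\depth\fm=1$, so $\fm$ fails $\widetilde{S}_2$ and is \emph{not} a second syzygy. The correct range is $0<i\le n$, which is what Ma\c{s}ek's Theorem~43 asserts and what the proof of Theorem~\ref{th2}(b) actually feeds into Lemma~\ref{le} (via (\ref{th2}.5)--(\ref{th2}.7), which land on $\Ext^i_R(\trk M,C)=0$ for $1\le i\le n$). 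Your own base case already betrays this: ``$M$ is a first $C$--syzygy iff $\Ext^1_R(\trk M,C)=0$'' is the $n=1$ instance of $0<i\le n$, whereas $0<i<1$ is vacuous and would make the lemma assert that \emph{every} module is a first $C$--syzygy. You have silently corrected the index set without saying so; you should say so.

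Second, the shift isomorphism $\Ext^i_R(\trk M',C)\cong\Ext^{i+1}_R(\trk M,C)$ ($i\ge1$) is not available for an arbitrary exact sequence $0\to M\to C_0\to M'\to0$ with $C_0\in\Pc$. The version the authors use elsewhere (\cite[Lemma~2.2]{DS1}, invoked in the proof of Theorem~\ref{th2}) yields $0\to\trk M'\to Q\otimes_R C\to\trk M\to 0$ only under the additional hypothesis $\Ext^1_R(M',C)=0$. In the pushforward direction this is built in, because $C_0^{\triangledown}\to M^{\triangledown}$ is by construction the chosen surjection $P\twoheadrightarrow M^{\triangledown}$, so $\Ext^1_R(M',C)=\operatorname{coker}(C_0^{\triangledown}\to M^{\triangledown})=0$. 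But in the direction (i)$\Rightarrow$(ii) you simply truncate the defining $C$--syzygy sequence, and there the map $C_0^{\triangledown}\to M^{\triangledown}$ has no reason to be onto, so $\Ext^1_R(M',C)$ need not vanish and your ``same shift isomorphism'' does not apply as stated. This is precisely the delicate part of Ma\c{s}ek's proof under the weaker hypothesis on $X^{n-2}(R)$ (note Theorem~\ref{ttt1}(b) needs $X^{n-1}(R)$), and it is the part your sketch leaves as a gesture. Your closing remark that ``at the primes outside $X^{n-2}(R)$ the vanishing follows from the induction applied locally'' also does not stand on its own: at such a prime $\depth R_\fp\ge n-1$ and the hypothesis gives no control on $\gcpd(M_\fp)$, so there is no local induction to invoke without first establishing the global shift. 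You correctly flag the shift isomorphism as the main obstacle; I would add that verifying its hypothesis in the direction (i)$\Rightarrow$(ii), with only $X^{n-2}(R)$ available, is exactly where the content of Ma\c{s}ek's theorem lies.
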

\begin{proof}
The proof is analogous to \cite[Theorem 43]{M1}.	
\end{proof}
\begin{thm}\cite[Proposition 2.4]{DS1}\label{ttt1}
	Let $C$ be a semidualizing $R$--module and $M$ an $R$--module. For a
	positive integer $n$, consider the following statements.
	\begin{itemize}
		\item[(i)]$\Ext^i_R(\trk M,C)=0$ for all $i$, $1\leq i\leq n$.
		\item[(ii)]$M$ is an $n$th $C$-syszygy module.
		\item[(iii)]$M$ satisfies $\widetilde{S}_n$.
	\end{itemize}
	Then the following implications hold true.
	\begin{itemize}
		\item[(a)] (i)$\Rightarrow$(ii)$\Rightarrow$(iii).
		\item[(b)] If $M$ has finite $\gc$--dimension on $X^{n-1}(R)$, then (iii)$\Rightarrow$(i).
	\end{itemize}
\end{thm}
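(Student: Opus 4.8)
The plan is to prove part (a), i.e. $\mathrm{(i)}\Rightarrow\mathrm{(ii)}\Rightarrow\mathrm{(iii)}$, with no extra hypothesis, and then part (b), i.e. $\mathrm{(iii)}\Rightarrow\mathrm{(i)}$, using the finiteness of $\gc$--dimension on $X^{n-1}(R)$. Only the last implication uses the hypothesis, and it carries essentially all the difficulty.

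For $\mathrm{(i)}\Rightarrow\mathrm{(ii)}$: split the defining exact sequence (\ref{d3}.1) of $\trk M$ into $0\to M^{\triangledown}\to P_0^{\triangledown}\to\Omega_C\trk M\to 0$ and $0\to\Omega_C\trk M\to P_1^{\triangledown}\to\trk M\to 0$. As $C$ is semidualizing, $\Ext^{j}_R(P,C)=0$ for all $j>0$ and all $P\in\Pc$, so dimension-shifting through these two sequences gives $\Ext^{j}_R(M^{\triangledown},C)\cong\Ext^{j+2}_R(\trk M,C)$ for every $j\geq 1$. Assume $\Ext^{i}_R(\trk M,C)=0$ for $1\leq i\leq n$. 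By (\ref{d3}.2), $M$ embeds into $M^{\triangledown\triangledown}$, and if $n\geq 2$ then $M\cong M^{\triangledown\triangledown}$. Take a projective resolution $\cdots\to G_1\to G_0\to M^{\triangledown}\to 0$ and apply $(-)^{\triangledown}$; the resulting complex $0\to M^{\triangledown\triangledown}\to G_0^{\triangledown}\to G_1^{\triangledown}\to\cdots$ has all terms in $\Pc$, augmentation an isomorphism onto $\ker(G_0^{\triangledown}\to G_1^{\triangledown})$, and cohomology $\Ext^{j}_R(M^{\triangledown},C)$ at $G_j^{\triangledown}$, which vanishes for $1\leq j\leq n-2$ by the above isomorphism. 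This is precisely enough exactness to exhibit $M^{\triangledown\triangledown}$ as an $n$th $C$--syzygy, and then so is $M$ (it equals $M^{\triangledown\triangledown}$ if $n\geq 2$, and embeds into $G_0^{\triangledown}\in\Pc$ via $M\hookrightarrow M^{\triangledown\triangledown}$ if $n=1$).

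For $\mathrm{(ii)}\Rightarrow\mathrm{(iii)}$: fix $\fp\in\Spec R$, assuming $\depth R_{\fp}>0$. Localizing an exact sequence $0\to M\to C_1\to\cdots\to C_n$ with each $C_i\in\Pc$ at $\fp$, every $(C_i)_{\fp}$ is a finite free $C_{\fp}$--module, whence $\depth_{R_{\fp}}(C_i)_{\fp}=\depth_{R_{\fp}}C_{\fp}=\depth R_{\fp}$ (a standard property of semidualizing modules). Break the localized complex into short exact sequences $0\to Z_i\to(C_i)_{\fp}\to Z_{i+1}\to 0$ with $Z_1\cong M_{\fp}$ and $Z_n\hookrightarrow(C_n)_{\fp}$ (so $\depth_{R_{\fp}}Z_n\geq 1$), and apply the depth lemma repeatedly to obtain $\depth_{R_{\fp}}M_{\fp}\geq\min\{n,\depth R_{\fp}\}$, i.e. $M$ satisfies $\sn_n$.

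For $\mathrm{(iii)}\Rightarrow\mathrm{(i)}$: it suffices to show $\Ext^{i}_R(\trk M,C)_{\fp}=0$ for all $\fp$ and $1\leq i\leq n$, where $\Ext^{i}_R(\trk M,C)_{\fp}\cong\Ext^{i}_{R_{\fp}}(\trcp(M_{\fp}),C_{\fp})$ for $i>0$. If $\fp\in X^{n-1}(R)$, then $\gkd_{R_{\fp}}(M_{\fp})<\infty$ by hypothesis, and $\sn_n$ gives $\depth_{R_{\fp}}M_{\fp}\geq\min\{n,\depth R_{\fp}\}=\depth R_{\fp}$, so $\gkd_{R_{\fp}}(M_{\fp})=\depth R_{\fp}-\depth_{R_{\fp}}M_{\fp}=0$ by Theorem \ref{G3}(iv); then Theorem \ref{G3}(i) yields $\Ext^{i}_{R_{\fp}}(\trcp(M_{\fp}),C_{\fp})=0$ for all $i>0$. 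In particular, each $\Ext^{i}_R(\trk M,C)$ $(1\leq i\leq n)$ is supported only on primes of depth $\geq n$. The deep primes are treated by induction on $n$, following \cite[Theorem 43]{M1} (the pattern already used to prove Lemma \ref{le}): for $n=1$, $\Ext^{1}_R(\trk M,C)$ embeds into $M$ by (\ref{d3}.2), so its associated primes lie in $\Ass_R M$ and hence, by $\sn_1$, in $X^0(R)$, where it already vanishes; for $n\geq 2$, the inductive hypothesis (whose hypotheses are implied by those for $n$) leaves only $\Ext^{n}_R(\trk M,C)$, and this is killed using the support restriction above, the bound $\depth_{R_{\fp}}M_{\fp}\geq n$ on the deep primes, and a grade estimate for $\Ext^{n}_R(\trk M,C)$ drawn from the dimension-shift isomorphisms of the first step; equivalently, one may invoke Lemma \ref{le} with $n+1$ replacing $n$, which under the standing hypothesis identifies the vanishing of $\Ext^{1}_R(\trk M,C),\dots,\Ext^{n}_R(\trk M,C)$ with $M$ being an $(n+1)$th $C$--syzygy. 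The crux, which I expect to be the main obstacle, is this last grade/depth estimate over the deep primes; the remaining steps are routine homological algebra.
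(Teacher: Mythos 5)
The paper does not give a proof of this statement: it is cited verbatim from \cite[Proposition 2.4]{DS1}, so there is nothing in the present document to compare your argument against; your proposal must be judged on its own terms. Your proofs of (i)$\Rightarrow$(ii) and (ii)$\Rightarrow$(iii) are correct and are the standard arguments (use $(\ref{d3}.2)$ and the $C$-dual of a projective resolution of $M^{\triangledown}$ to exhibit $M$ as an $n$th $C$-syzygy, then run the depth lemma along the localized $C$-syzygy chain using $\depth_{R_\fp}(C_i)_\fp=\depth R_\fp$).

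The gap is in (iii)$\Rightarrow$(i), and you correctly flag it. Your skeleton is right: induct on $n$; for $\fp\in X^{n-1}(R)$ combine $\sn_n$ with Theorem~\ref{G3}(iv) to get $\gkd_{R_\fp}(M_\fp)=0$, hence $\Ext^i_{R_\fp}(\trcp M_\fp,C_\fp)=0$ for all $i>0$; then dispose of the deep primes. But the ingredient you name for the deep primes, a ``grade estimate for $\Ext^n_R(\trk M,C)$,'' does not close the argument: the shallow-prime vanishing already gives $\gr\Ext^n_R(\trk M,C)\ge n$ \emph{if} the module is nonzero, and that is no contradiction. What is needed is to show $\Ext^n_R(\trk M,C)$ has no associated primes at all, i.e.\ $\depth_{R_\fp}\Ext^n_R(\trk M,C)_\fp>0$ for every $\fp$ with $\depth R_\fp\ge n$. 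The missing step is a depth chase: once the inductive hypothesis gives $\Ext^i_R(\trk M,C)=0$ for $1\le i\le n-1$, one has $M\cong M^{\triangledown\triangledown}$ (for $n\ge 3$; for $n=2$ use $(\ref{d3}.2)$ directly), the truncated complex $0\to M\to G_0^{\triangledown}\to\cdots\to G_{n-2}^{\triangledown}$ is exact, and $\Ext^n_R(\trk M,C)\cong K/W$ with $K=\ker(G_{n-2}^{\triangledown}\to G_{n-1}^{\triangledown})\subseteq G_{n-2}^{\triangledown}$ and $W$ the image of $G_{n-3}^{\triangledown}$. Localize at a deep prime $\fp$, start from $\depth_{R_\fp}M_\fp\ge n$ (from $\sn_n$), and iterate the depth lemma along the chain of cokernels to get $\depth_{R_\fp}W_\fp\ge2$ and $\depth_{R_\fp}K_\fp\ge1$, hence $\depth_{R_\fp}(K/W)_\fp\ge1$, which rules out $\fp$ as an associated prime. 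Finally, your alternative route via Lemma~\ref{le} with $n+1$ in place of $n$ only restates the goal: under the hypothesis of (b) it says (i) is \emph{equivalent} to $M$ being an $(n+1)$th $C$-syzygy, and $\sn_n$ by itself does not produce an $(n+1)$th $C$-syzygy, so that reduction still needs the same depth chase to be completed.
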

The following is a generalization of a result of Schenzel \cite[Theorem 4.1]{Sc}.
\begin{thm}\label{th2}
Let $M$ be an $R$--module which is horizontally linked with respect to $C$. Assume that $M\in\bc$. 
For a positive integer $n$, consider the following statements.
\begin{itemize}
\item[(i)]{$\Ext^i_{\Pc}(\lambda_R(C,M),C)=0$ for $0<i<n$.}
\item[(ii)]{$M$ is an $n$th $C$-syzygy module.}
\item[(iii)]{$M$ satisfies $\sn_n$.}
\end{itemize}
Then the following implications hold true.
\begin{itemize}
	\item[(a)] (i)$\Rightarrow$(ii)$\Rightarrow$(iii).
	\item[(b)] If $M$ has finite $\gc$--dimension on $X^{n-2}(R)$, then the statements (i) and (ii) are equivalent.
	\item[(c)] If $M$ has finite $\gc$--dimension on $X^{n-1}(R)$, then all the statements (i)-(iii) are equivalent.
\end{itemize}
\end{thm}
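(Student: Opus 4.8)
The plan is to push the statement through the functor $(-)^\curlyvee=\Hom_R(C,-)$ and reduce it to its ``classical'' counterparts over $R$ --- Theorem~\ref{ttt1} and Lemma~\ref{le}, applied to the module $M^\curlyvee$ --- with the two sides connected by Foxby equivalence (Fact~\ref{example1}(iv)) and by Theorem~\ref{t2}. Set $N=\lambda_R(C,M)$, so that horizontal linkage with respect to $C$ reads $M\cong\lambda_R(C,N)=\Omega_C\trk(N^\curlyvee)$; recall that $M\in\bc$ gives $M\cong C\otimes_RM^\curlyvee$ with $M^\curlyvee\in\ac$.

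I would first record what the hypotheses give on the $\ac$-side. Since $M$ is $C$-stable and is a $C$-syzygy (it equals $\Omega_C$ of a module), the computation in the proof of Lemma~\ref{lem3} shows $M^\curlyvee$ is stable and a syzygy module; by Theorem~\ref{MS}, $\Ext^1_R(\Tr M^\curlyvee,R)=0$ (and $\Ext^1_R(\trk(M^\curlyvee),C)=0$, by Lemma~\ref{l1} and Remark~\ref{remark3}(i)). This degree-one vanishing is what will let the ranges of indices match below. Next, applying $\Hom_R(C,-)$ to the exact sequence (\ref{t3}.1) --- which is legitimate because $\trk(N^\curlyvee)\in\bc$ (this is established in the proof of Theorem~\ref{t3}), so the sequence stays exact and $(\trk N^\curlyvee)^\curlyvee\cong\Tr(N^\curlyvee)$ by Remark~\ref{remark3}(i) and Fact~\ref{example1}(iv) --- gives $M^\curlyvee\cong\Omega\Tr(N^\curlyvee)=\lambda(N^\curlyvee)$; symmetrically (up to stable equivalence) $N^\curlyvee\cong\lambda(M^\curlyvee)$.

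Now the three conditions translate as follows. Condition (i): by Theorem~\ref{t2}, $\Ext^i_{\Pc}(N,C)\cong\Ext^i_R(N^\curlyvee,C^\curlyvee)=\Ext^i_R(N^\curlyvee,R)$, since $C^\curlyvee=\Hom_R(C,C)\cong R$; dimension-shifting through $N^\curlyvee\cong\lambda(M^\curlyvee)=\Omega\Tr(M^\curlyvee)$ and using $\Ext^1_R(\Tr M^\curlyvee,R)=0$ turns (i) into the $\Ext$-vanishing hypothesis appearing in Theorem~\ref{ttt1}(i)/Lemma~\ref{le}(ii) for $M^\curlyvee$ over $R$. Condition (ii): ``$M$ is an $n$th $C$-syzygy'' is equivalent to ``$M^\curlyvee$ is an $n$th syzygy'' --- tensoring an augmented $n$th-syzygy resolution of $M^\curlyvee$ with $C$ over $R$ (all cosyzygies stay in $\ac$ by Fact~\ref{example1}(i), so exactness survives and $C\otimes_RM^\curlyvee\cong M$) gives one implication, and applying $\Hom_R(C,-)$ to an augmented $n$th $C$-syzygy resolution of $M$ (all images stay in $\bc$ by Fact~\ref{example1}(i), so the sequence stays exact and each $C$-projective becomes projective) gives the other. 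Condition (iii): $M$ and $M^\curlyvee$ satisfy $\sn_n$ simultaneously, by Lemma~\ref{lem2}(ii). Finally, the finiteness assumptions on $\gc$-dimension over $X^{n-2}(R)$ (resp.\ $X^{n-1}(R)$) become finiteness of $\g$-dimension of $M^\curlyvee$ over the same locus by localizing Lemma~\ref{lwsw}. With these dictionaries, part (a) is Theorem~\ref{ttt1}(a) for $M^\curlyvee$ (and (ii)$\Rightarrow$(iii) is, in any case, Theorem~\ref{ttt1}(a) applied to $M$ itself); part (b) additionally invokes Lemma~\ref{le} for $M^\curlyvee$ to get (ii)$\Rightarrow$(i); part (c) additionally invokes Theorem~\ref{ttt1}(b) for $M^\curlyvee$ to get (iii)$\Rightarrow$(i).

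The real difficulty, I expect, lies in the bookkeeping condensed into the two middle paragraphs: ensuring every module that gets dualized actually lies in $\bc$, so that Foxby equivalence applies on the nose (rather than only up to a spectral-sequence error term, as in the proof of Lemma~\ref{l1}), and matching the exact range of degrees in which the relative cohomology $\Ext^\bullet_{\Pc}(\lambda_R(C,M),C)$ vanishes with the ranges required by Lemma~\ref{le} and Theorem~\ref{ttt1} --- which is precisely where the automatic vanishing $\Ext^1_R(\Tr M^\curlyvee,R)=0$ coming from horizontal linkage has to be used.
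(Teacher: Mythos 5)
Your proposal is a genuinely different route from the paper's. The paper never leaves the $\mathcal{B}_C$-side: via the exact sequences $0\to\trk\trk(N^\curlyvee)\to Q\otimes_RC\to\trk M\to 0$ and $0\to N^\curlyvee\to\trk\trk(N^\curlyvee)\to X\to 0$ with $\gkd_R(X)=0$, it identifies $\Ext^{i+1}_R(\trk M,C)\cong\Ext^i_R(N^\curlyvee,C)$, then passes to $\Ext^i_R(N^\curlyvee,R)\cong\Ext^i_{\Pc}(N,C)$ using \cite[Theorem 4.1]{Sa} and Theorem~\ref{t2}, and finally applies Theorem~\ref{ttt1} and Lemma~\ref{le} to $M$ itself with the semidualizing $C$. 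You instead push the whole statement through $(-)^\curlyvee$ and invoke the $C=R$ specialization of those results for $M^\curlyvee$; your dictionary for (i), (ii), (iii) is correct, and your dimension shift $\Ext^i_R(N^\curlyvee,R)\cong\Ext^{i+1}_R(\Tr M^\curlyvee,R)$ together with the automatic $\Ext^1_R(\Tr M^\curlyvee,R)=0$ lands you in the same range that the paper reaches via (\ref{th2}.5)--(\ref{th2}.7). Your route is tidier in that it replaces the double-transpose manipulations and the $\Ext(-,C)$ versus $\Ext(-,R)$ comparison by a single application of Foxby equivalence.

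The genuine gap is in the translation of the finiteness hypotheses in parts (b) and (c). These are stated as finiteness of the $\gc$-dimension of $M$ on $X^{n-2}(R)$ resp.\ $X^{n-1}(R)$; the paper uses them literally, since it applies Lemma~\ref{le} and Theorem~\ref{ttt1}(b) to $M$ with $C$. Your version applies those results to $M^\curlyvee$ over $R$, so you need $\gd_{R_\fp}\bigl((M^\curlyvee)_\fp\bigr)<\infty$ on the same loci. You attribute this to ``localizing Lemma~\ref{lwsw}'', but that lemma only covers $\gc$-dimension \emph{zero}: it says $\gkd_R(M)=0$ iff $\gd_R(M^\curlyvee)=0$ for $M\in\bc$, and localizing it still gives only the dimension-zero equivalence. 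It does not, by itself, yield $\gkd_{R_\fp}(M_\fp)<\infty\Rightarrow\gd_{R_\fp}\bigl((M^\curlyvee)_\fp\bigr)<\infty$. That Foxby-equivalence of finite $G$-dimensions is true, but it requires a separate argument or reference not supplied by the paper, and this is precisely the bookkeeping the paper's route is designed to avoid by staying with $M$ and $\trk$. (A side remark applying equally to the paper's own proof: the implication (ii)$\Rightarrow$(i) in part (b) needs Lemma~\ref{le}(ii) to assert vanishing in degrees $0<i\leq n$ rather than $0<i<n$; the printed range appears to be a slip, not a flaw specific to your argument.)
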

\begin{proof}
Set $N=\lambda_R(C,M)$. Consider the exact sequence
\begin{equation}\tag{\ref{th2}.1}
0\rightarrow M\rightarrow P\otimes_RC\rightarrow\trk(N^\curlyvee)\rightarrow 0,
\end{equation}
where $P$ is a projective $R$-module. By Lemma \ref{l1}, 
\begin{equation}\tag{\ref{th2}.2}
\Ext^1_R(\trk(N^\curlyvee),C)=0.
\end{equation}
Therefore, by \cite[Lemma 2.2]{DS1}, the exact sequence (\ref{th2}.1) induces the exact 
sequence
\begin{equation}\tag{\ref{th2}.3}
0\rightarrow\trk\trk(N^\curlyvee)\rightarrow Q\otimes_RC\rightarrow\trk M\rightarrow 0,
\end{equation}
where $Q$ is a projective $R$--module.
On the other hand, by \cite[Lemma 2.12]{Sa}, there exists the following exact sequence
\begin{equation}\tag{\ref{th2}.4}
0\rightarrow N^\curlyvee\rightarrow\trk\trk(N^\curlyvee)\rightarrow X\rightarrow 0,
\end{equation}
where $\gkd_R(X)=0$. As $M$ is horizontally linked with respect to $C$,
it is a $C$-syzygy module and so $\Ext^1_R(\trk M,C)=0$. Therefore, by the exact sequences (\ref{th2}.3) and (\ref{th2}.4), we obtain the following.
\begin{equation}\tag{\ref{th2}.5}
\Ext^i_R(\trk M,C)=0 \text{ for } 1\leq i\leq n \Longleftrightarrow \Ext^i_R(N^\curlyvee,C)=0 \text{ for } 1\leq i\leq n-1.
\end{equation}
As $M\in\bc$, by the Fact \ref{example1}(i) and the excat sequence (\ref{th2}.1), $\trk(N^\curlyvee)\in\bc$. Hence, by the Fact \ref{example1}(iv) and Remark \ref{remark3}(i),
$\Tr(N^\curlyvee)\in\ac$. It follows from \cite[Theorem 4.1]{Sa} that
\begin{equation}\tag{\ref{th2}.6}
\Ext^i_R(N^\curlyvee,C)=0 \text{ for } 0<i<n \Longleftrightarrow \Ext^i_R(N^\curlyvee,R)=0 \text{ for } 0<i<n.
\end{equation}
Note that, by Theorem \ref{t2}, we have the isomorphism,
\begin{equation}\tag{\ref{th2}.7}
\Ext^i_{\Pc}(N,C)\cong\Ext^i_R(N^\curlyvee, R) \text{ for all } i\geq0.
\end{equation}

(a), (c). Follow from (\ref{th2}.5), (\ref{th2}.6), (\ref{th2}.7) and Theorem \ref{ttt1}.

(b). Follows from (\ref{th2}.5), (\ref{th2}.6), (\ref{th2}.7) and Lemma \ref{le}.

\end{proof}
\begin{cor}\label{c3}
Let $C$ be a semidualizing $R$--module with $\id_{R_\fp}(C_\fp)<\infty$ for all $\fp\in X^{n-1}(R)$.
Assume that $M$ is an $R$--module which is horizontally linked with respect to $C$ and that $M\in\bc$. Then the following are equivalent.
\begin{itemize}
\item[(i)]{$M$ satisfies $\sn_n$.}
\item[(ii)]{$M$ is an $n$th $C$-syzygy module.}
\item[(iii)]{$\Ext^i_R(\lambda_R(C,M),C)=0$ for $0<i<n$.}
\item[(iv)]{$\Ext^i_{\Pc}(\lambda_R(C,M),C)=0$ for $0<i<n$.}
\end{itemize}
\end{cor}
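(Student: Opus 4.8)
The plan is to derive this as an immediate consequence of Theorem \ref{th2} together with the additional hypothesis $\id_{R_\fp}(C_\fp)<\infty$ for all $\fp\in X^{n-1}(R)$. The key point is that this finiteness hypothesis upgrades ``$M$ has finite $\gc$-dimension on $X^{n-1}(R)$'' from an assumption to a fact: whenever $\id_{R_\fp}(C_\fp)<\infty$, the localization $C_\fp$ is a canonical module of $R_\fp$ (it is a semidualizing module of finite injective dimension over the local ring $R_\fp$), and hence every finitely generated $R_\fp$-module has finite $\gcp$-dimension by the characterization recalled after Definition of $\gc$-resolution (cf. \cite[Proposition 1.3]{Ge}). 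So the hypothesis of part (c) of Theorem \ref{th2} is automatically satisfied, and part (c) gives the equivalence of (i), (ii), and (iv) here (our (iv) is statement (i) of Theorem \ref{th2}, our (ii) is (ii), and our (i) is (iii)).

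It then remains only to fold in the fourth condition, namely the absolute Ext-vanishing $\Ext^i_R(\lambda_R(C,M),C)=0$ for $0<i<n$. First I would observe that since $M\in\bc$, the exact sequence (\ref{e}) together with Fact \ref{example1}(i) shows $\trk(N^\curlyvee)\in\bc$ where $N=\lambda_R(C,M)$, hence $\Tr(N^\curlyvee)\in\ac$ by Fact \ref{example1}(iv) and Remark \ref{remark3}(i), and consequently $N^\curlyvee\in\ac$ (as a $C$-syzygy of something in $\ac$, using Fact \ref{example1}(i) applied to $0\to N\to (P_1)^*\otimes_RC\to\trk(N^\curlyvee)\to 0$ after applying $(-)^\curlyvee$, or more directly since $N$ is a $C$-syzygy). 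By Theorem \ref{t2}, $\Ext^i_{\Pc}(N,C)\cong\Ext^i_R(N^\curlyvee,C^\curlyvee)=\Ext^i_R(N^\curlyvee,R)$, and by \cite[Theorem 4.1]{Sa} applied to $N^\curlyvee\in\ac$ (exactly as in (\ref{th2}.6)), $\Ext^i_R(N^\curlyvee,C)=0$ for $0<i<n$ if and only if $\Ext^i_R(N^\curlyvee,R)=0$ for $0<i<n$. Finally, since $N\in\bc$ as well (it has finite Gorenstein injective dimension on the relevant locus, or use Fact \ref{example1}(i) on (\ref{e})), the second statement of Theorem \ref{t2} gives $\Ext^i_{\Pc}(N,C)\cong\Ext^i_R(N,C)$, so condition (iii) here coincides with condition (iv). This closes the cycle (i)$\Leftrightarrow$(ii)$\Leftrightarrow$(iv)$\Leftrightarrow$(iii).

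The step I expect to require the most care is verifying that $\id_{R_\fp}(C_\fp)<\infty$ on $X^{n-1}(R)$ genuinely forces finite $\gcp$-dimension on that locus in the precise form needed to invoke Theorem \ref{th2}(c); one must be a little careful that the locus $X^{n-1}(R)$ matches (the proof of Theorem \ref{th2} uses finiteness of $\gc$-dimension on $X^{n-1}(R)$ for part (c), and on $X^{n-2}(R)$ for part (b), while Lemma \ref{le} and the spectral-sequence argument in Lemma \ref{l1} are stated for the corresponding loci). The other routine-but-necessary check is that $N=\lambda_R(C,M)$ and $N^\curlyvee$ land in $\bc$ and $\ac$ respectively so that both isomorphisms of Theorem \ref{t2} and the comparison \cite[Theorem 4.1]{Sa} apply; this is exactly the bookkeeping already carried out in the proof of Theorem \ref{th2} and can be quoted from there.
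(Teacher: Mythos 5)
Your derivation of (i) $\Leftrightarrow$ (ii) $\Leftrightarrow$ (iv) from Theorem \ref{th2}(c) is correct, and your observation that the hypothesis $\id_{R_\fp}(C_\fp)<\infty$ on $X^{n-1}(R)$ forces finite $\gc$-dimension locally (via Gerko) is exactly the unstated step the paper relies on there. So far so good.

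The problem is the route you take to bring (iii) into the cycle. You want to identify $\Ext^i_R(N,C)$ with $\Ext^i_{\Pc}(N,C)$ by invoking the second part of Theorem \ref{t2}, which requires $N\in\bc$ (together with $C\in\bc$, which is automatic). But $N=\lambda_R(C,M)\in\bc$ is not established by the hypotheses, and your suggested justifications do not give it. From the exact sequence (\ref{e}) for $M$, namely $0\to N\to (P_1)^*\otimes_RC\to\trk(M^\curlyvee)\to 0$, Fact \ref{example1}(i) would yield $N\in\bc$ only if one already knew $\trk(M^\curlyvee)\in\bc$, equivalently $\Tr(M^\curlyvee)\in\ac$. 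What the proof of Theorem \ref{th2} actually shows (using the \emph{other} exact sequence $0\to M\to P^*\otimes_RC\to\trk(N^\curlyvee)\to 0$, coming from $M\cong\lambda_R(C,N)$) is that $\trk(N^\curlyvee)\in\bc$ and hence $\Tr(N^\curlyvee)\in\ac$ — note the roles of $M$ and $N$ are swapped, and there is no symmetry to appeal to since we do not know $N\in\bc$ to begin with. Similarly, $N^\curlyvee$ being a first syzygy (which does follow since $N$ is a $C$-syzygy) says nothing about $N^\curlyvee$ being in $\ac$. In the special case $C=\omega_R$ over a Cohen-Macaulay local ring one can rescue $N\in\mathcal{B}_{\omega_R}$ (via $\gd_R(X)<\infty\Leftrightarrow\gd_R(\Tr X)<\infty$), but for a general semidualizing $C$ this is not available, and Corollary \ref{c3} is stated for general $C$.

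The paper avoids this entirely: it proves (i) $\Leftrightarrow$ (iii) directly, never touching the Bass class membership of $N$. Using Lemma \ref{lem2}, $M$ satisfies $\sn_n$ iff $M^\curlyvee$ does; by Theorem \ref{ttt1} (whose hypothesis on finite $\gc$-dimension is supplied by the $\id_{R_\fp}(C_\fp)$ assumption), this is equivalent to $\Ext^i_R(\trk(M^\curlyvee),C)=0$ for $1\le i\le n$; and by Lemma \ref{l1} together with the long exact sequence obtained by applying $\Hom_R(-,C)$ to (\ref{e}), this is in turn equivalent to $\Ext^i_R(N,C)=0$ for $0<i<n$. Then (i) $\Leftrightarrow$ (ii) $\Leftrightarrow$ (iv) is quoted from Theorem \ref{th2}, as you also did. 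You should replace your (iii) $\Leftrightarrow$ (iv) argument with this direct (i) $\Leftrightarrow$ (iii) chain, or else supply a genuine proof that $N\in\bc$ under the stated hypotheses.
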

\begin{proof}
(i)$\Leftrightarrow$(iii) Set $N=\lambda_R(C,M)$.
By Lemma \ref{lem2},
\begin{equation}\tag{\ref{c3}.1}
M \text{ satisfies } \sn_n \text{ if and only if } M^{\curlyvee} \text{ satisfies } \sn_n.
\end{equation}
By Lemma \ref{l1}, $\Ext^1_R(\trk(M^{\curlyvee}),C)=0$. It follows from the exact sequence (\ref{e}) that
\begin{equation}\tag{\ref{c3}.2}
\Ext^i_R(N,C)=0 \text{ for } 0<i<n  \Leftrightarrow \Ext^i_R(\trk(M^{\curlyvee}),C)=0 \text{ for } 0<i<n+1.
\end{equation}
Now the assertion follows from (\ref{c3}.1), (\ref{c3}.2) and Theorem \ref{ttt1}.

The equivalence of (i), (ii) and (iv) follows from Theorem \ref{th2}. 
\end{proof}
Now we are ready to present the first part of Theorem C.
\begin{cor}\label{c2}
Let $(R,\fm)$ be a Cohen-Macaulay local ring of dimension $d>0$ with canonical module $\omega_R$. Assume that $M$ is an $R$--module of finite Gorenstein injective dimension which is horizontally linked with respect to $\omega_R$.
The following are equivalent.
\begin{itemize}
\item[(i)]{$M$ satisfies $(S_n)$.}
\item[(ii)]{$\hh^i_{\fm}(\lambda_R(\omega_R,M))=0$ for $d-n<i<d$.}
\end{itemize}
In particular, $M$ is maximal Cohen-Macaulay if and only if $\lambda_R(\omega_R,M)$ is maximal Cohen-Macaulay.
\end{cor}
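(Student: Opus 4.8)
The plan is to deduce the corollary from Corollary \ref{c3} and the local duality theorem. Write $N=\lambda_R(\omega_R,M)$. First I would check that Corollary \ref{c3} applies with $C=\omega_R$ and no restriction on $n$: since $\Gid_R(M)<\infty$ we have $M\in\mathcal{B}_{\omega_R}$ by Fact \ref{example1}(ii), and for every $\fp\in\Spec R$ the localization $(\omega_R)_\fp$ is a canonical module of the Cohen--Macaulay local ring $R_\fp$, hence has finite injective dimension. So for each $n$, Corollary \ref{c3} gives that $M$ satisfies $\sn_n$ if and only if $\Ext^i_R(N,\omega_R)=0$ for $0<i<n$. Next I would apply local duality: since $R$ is Cohen--Macaulay local of dimension $d$ with canonical module $\omega_R$, there is an isomorphism $\hh^i_\fm(N)\cong\Hom_R(\Ext^{d-i}_R(N,\omega_R),\mathbf{E}_R(k))$ for every $i$, and because a nonzero finitely generated module has nonzero Matlis dual this yields $\hh^i_\fm(N)=0\Leftrightarrow\Ext^{d-i}_R(N,\omega_R)=0$. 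Re-indexing by $j=d-i$, condition (ii) is exactly $\Ext^j_R(N,\omega_R)=0$ for $0<j<n$; so $M$ satisfies $\sn_n$ if and only if statement (ii) holds.

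It then remains to see that, for our $M$, the conditions $\sn_n$ and $(S_n)$ coincide. Being horizontally linked with respect to $\omega_R$, $M$ is an $\omega_R$-syzygy, hence embeds in a finite direct sum of copies of $\omega_R$, so $\Ass_R M\subseteq\Ass_R\omega_R=\Min R$ (the last equality holds because $R$ is Cohen--Macaulay). Since a Cohen--Macaulay local ring is equidimensional and catenary, it follows that $\dim_{R_\fp}M_\fp=\dim R_\fp=\depth R_\fp$ for every $\fp\in\Supp_R M$, so $\min\{n,\dim_{R_\fp}M_\fp\}=\min\{n,\depth R_\fp\}$ and $(S_n)$, $\sn_n$ agree on $M$ (the case $M=0$ being trivial). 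Alternatively, one can pass to $\Hom_R(\omega_R,M)$, which is horizontally linked in the ordinary sense, and combine the remark following Definition \ref{S} with Lemma \ref{lem2}. This completes (i)$\Leftrightarrow$(ii).

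For the ``in particular'' I would specialize to $n=d$. If $M\ne0$ then $\fm\in\Supp_R M$, so the previous paragraph gives $\dim_R M=d$, and therefore $M$ is maximal Cohen--Macaulay exactly when it satisfies $(S_d)$, i.e.\ exactly when $\hh^i_\fm(N)=0$ for $0<i<d$. On the other hand $N\ne0$ (otherwise $M\cong\lambda_R(\omega_R,N)=0$), and $N$ is an $\omega_R$-syzygy by the exact sequence (\ref{e}); hence $\dim_R N=d$ and $\depth_R N\ge1$ (as $d>0$), so $\hh^0_\fm(N)=0$ and the vanishing of $\hh^i_\fm(N)$ for $0<i<d$ is equivalent to $N$ being maximal Cohen--Macaulay. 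Combining, $M$ is maximal Cohen--Macaulay if and only if $N$ is. I expect the only real work to lie in these bookkeeping checks --- that Corollary \ref{c3} is available for all $n$, and that $\sn_n$ may be replaced by $(S_n)$ --- the substantive ingredient being the single use of local duality.
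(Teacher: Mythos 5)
Your proposal is correct and follows exactly the route the paper takes: Corollary \ref{c3} with $C=\omega_R$ (available via Fact \ref{example1}(ii) and the fact that $\omega_R$ localizes to a module of finite injective dimension), followed by local duality. The paper's proof is a one-line citation of these ingredients; you supply the details it leaves implicit, namely the check that $\widetilde{S}_n$ and $(S_n)$ coincide for $M$ (which the paper handles by the remark after Definition \ref{S}) and the bookkeeping for the ``in particular'' clause, all of which you carry out correctly.
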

\begin{proof}
This is an immediate consequence of Corollary \ref{c3}, Fact \ref{example1}(ii) and the Local Duality Theorem.
\end{proof}
One may translate Corollary \ref{c2} to a change of rings result.
\begin{cor}\label{cc3}
Let $(R,\fm)$ be a Cohen-Macaulay local ring with canonical module $\omega_R$ and let $\fa$ be a Cohen-Macaulay quasi-Gorenstein ideal of $R$ of grade $n$, $\overline{R}=R/\fa$.
Assume that $M$ is an $R$--module of finite Gorenstein injective dimension which is linked by the ideal $\fa$ with respect to $\omega_{\overline{R}}$.
The following are equivalent.
	\begin{itemize}
		\item[(i)]{$M$ satisfies $(S_n)$.}
		\item[(ii)]{$\hh^i_{\fm}(\lambda_{\overline{R}}(\omega_{\overline{R}},M))=0$ for $\dim R/\fa-n<i<\dim R/\fa$.}
	\end{itemize}
\end{cor}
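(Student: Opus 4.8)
The plan is to deduce Corollary \ref{cc3} directly from Corollary \ref{c2} by a standard change-of-rings reduction, passing to the quotient ring $\overline{R}=R/\fa$. First I would observe that, since $\fa$ is a Cohen-Macaulay quasi-Gorenstein ideal of grade $n$ in the Cohen-Macaulay local ring $R$, the ring $\overline{R}$ is itself Cohen-Macaulay local of dimension $d':=\dim R/\fa = \dim R - n$, and it possesses a canonical module, namely $\omega_{\overline{R}}$. Since $M$ is linked by $\fa$ with respect to $\omega_{\overline{R}}$, by definition $\fa\subseteq\ann_R(M)$ and $M$ is horizontally linked with respect to $\omega_{\overline{R}}$ when regarded as an $\overline{R}$-module; moreover $\lambda_{\overline{R}}(\omega_{\overline{R}},M)$ is the module produced by the horizontal linkage construction over $\overline{R}$.

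Next I would verify that the hypothesis of Corollary \ref{c2} transfers to $\overline{R}$: we need $\Gid_{\overline{R}}(M)<\infty$. This is exactly the content of Theorem \ref{ex}, which states that for an $R/\fa$-module $M$ over a quasi-Gorenstein ideal $\fa$, one has $\Gid_R(M)<\infty$ if and only if $\Gid_{\overline{R}}(M)<\infty$; since $M$ has finite Gorenstein injective dimension over $R$ by hypothesis, it has finite Gorenstein injective dimension over $\overline{R}$. (If $d'=0$ the statement is vacuous or trivial, so one may assume $d'>0$ as required by Corollary \ref{c2}.)

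Then I would apply Corollary \ref{c2} verbatim over the ring $\overline{R}$ of dimension $d'$: the module $M$ satisfies $(S_n)$ as an $\overline{R}$-module if and only if $\hh^i_{\fm_{\overline{R}}}(\lambda_{\overline{R}}(\omega_{\overline{R}},M))=0$ for $d'-n<i<d'$, i.e. for $\dim R/\fa - n < i < \dim R/\fa$. The remaining points are bookkeeping: the local cohomology $\hh^i_{\fm_{\overline{R}}}(-)$ of an $\overline{R}$-module agrees with $\hh^i_{\fm}(-)$ computed over $R$ (independence of base for local cohomology along $\fa\subseteq\fm$), and the Serre condition $(S_n)$ for $M$ over $\overline{R}$ is equivalent to $(S_n)$ over $R$ — here one uses that $\fa\subseteq\ann_R M$, so $\Supp_R M\subseteq V(\fa)$ and for $\fp\supseteq\fa$ the local rings $R_\fp$ and $\overline{R}_{\fp/\fa}$ have depths differing by exactly $\height(\fa R_\fp)=n$ (since $\fa$ is perfect, hence $\overline{R}$ Cohen-Macaulay and $R_\fp\to\overline{R}_{\fp/\fa}$ is a quotient by a regular sequence of length $n$), and $\depth_{R_\fp} M_\fp = \depth_{\overline{R}_{\fp/\fa}} M_{\fp/\fa}$.

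I expect the only mildly delicate step to be checking that $(S_n)$ over $R$ and over $\overline{R}$ coincide for $M$; everything else is a direct appeal to Theorem \ref{ex} and Corollary \ref{c2}. Since over a Cohen-Macaulay local ring the conditions $\sn_n$ and $(S_n)$ agree for horizontally linked modules (as noted after Definition \ref{S}), and $\min\{n,\depth R_\fp\}$ versus $\min\{n,\depth\overline{R}_{\fp/\fa}\}$ behave compatibly under the shift by the grade of $\fa$, this reduces to the elementary depth computation above. Thus the corollary follows, and no new ideas beyond the reduction are needed.
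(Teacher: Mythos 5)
Your proposal matches the paper's proof: the paper also derives Corollary \ref{cc3} by applying Corollary \ref{c2} over $\overline{R}=R/\fa$, with Theorem \ref{ex} supplying the transfer of finite Gorenstein injective dimension. One small inaccuracy in your bookkeeping: $R_\fp\to\overline{R}_{\fp/\fa}$ need not be a quotient by a regular sequence (a quasi-Gorenstein ideal is not a complete intersection in general); the depth shift $\depth R_\fp=\depth\overline{R}_{\fp/\fa}+n$ instead follows from Cohen--Macaulayness of $R_\fp$ and $\overline{R}_{\fp/\fa}$ together with $\dim R_\fp=\dim\overline{R}_{\fp/\fa}+n$, and the ring-independence of the Serre condition on $M$ is cleanest via $\depth_{R_\fp}M_\fp=\depth_{\overline{R}_{\fp/\fa}}M_{\fp/\fa}$ and $\dim_{R_\fp}M_\fp=\dim_{\overline{R}_{\fp/\fa}}M_{\fp/\fa}$, which hold simply because $\fa$ annihilates $M$.
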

\begin{proof}
This is an immediate consequence of Corollary \ref{c2} and Theorem \ref{ex}.	
\end{proof}
Recall that an $R$--module $M$ of dimension $d\geq1$ is called a
\emph{ generalized Cohen-Macaulay} module if
$\ell(\hh^i_\fm(M))<\infty$ for all $i$, $0\leq i\leq d-1$, where
$\ell$ denotes the length.
For an $R$--module $M$ and positive integer $n$, set $\mathcal{T}_n^{C}M:=\trk\Omega^{n-1}M$. 
\begin{thm}\label{t4}
Let $R$ be a Cohen-Macaulay local ring of dimension $d>1$ and let $C$ be a semidualiznig $R$--module with $\id_{R_\fp}(C_\fp)<\infty$ for all $\fp\in\Spec(R)\setminus\ \{\fm\}$. 
Assume that $M$ is a generalized Cohen-Macaulay $R$--module which is horizontally linked with respect to $C$ and that $M\in\bc$. Then
$\Ext^i_R(M^{\curlyvee},C)\cong\hh^i_{\fm}(\lambda_R(C,M)) \text{ for } 0<i<d.$
In particular, $\lambda_R(C,M)$ is generalized Cohen-Macaulay.
\end{thm}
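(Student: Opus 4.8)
The plan is to combine the structural exact sequences already developed for a horizontally linked module with respect to $C$ with the known interplay between $\gc$-dimension, the Bass class, and local cohomology over a Cohen-Macaulay local ring. Set $N=\lambda_R(C,M)$. Since $M$ is horizontally linked with respect to $C$ and lies in $\bc$, I would first invoke the exact sequence (\ref{e}), namely
\begin{equation*}
0\longrightarrow N\longrightarrow P^*\otimes_RC\longrightarrow\trk(M^\curlyvee)\longrightarrow0,
\end{equation*}
together with Lemma \ref{l1}, which gives $\Ext^1_R(\trk(M^\curlyvee),C)=0$. The point of the hypothesis $\id_{R_\fp}(C_\fp)<\infty$ for all $\fp\neq\fm$ is that, on the punctured spectrum, $C$ is (locally) a canonical module, so every localization $M_\fp$ has finite $\gc$-dimension; hence by Theorem \ref{G3}(iv) and the fact that $M$ is generalized Cohen-Macaulay, the modules $\Ext^i_R(M^\curlyvee,C)$ have finite length for $0<i<d$. (Here I would also use Fact \ref{example1}(iv) and Lemma \ref{lem2} to pass between $M$ and $M^\curlyvee$, noting $M^\curlyvee\in\ac$.)

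The core computation is to identify $\hh^i_\fm(N)$ with $\Ext^i_R(M^\curlyvee,C)$ for $0<i<d$. I would do this by a depth/dimension count combined with the change-of-rings isomorphism $\Ext^i_{\Pc}(N,C)\cong\Ext^i_R(N^\curlyvee,R)$ of Theorem \ref{t2} and the relative-to-absolute comparison $\Ext^i_R(N^\curlyvee,C)\cong\Ext^i_R(N^\curlyvee,R)$ coming from \cite[Theorem 4.1]{Sa} (as used in (\ref{th2}.6)), applied on the punctured spectrum where $C$ is locally dualizing. More concretely: over a Cohen-Macaulay local ring with canonical module, for a module $L\in\bc$ of finite $\gc$-dimension one has by Local Duality $\hh^i_\fm(L)\cong\Hom_R(\Ext^{d-i}_R(L,\omega_R),\E_R(k))$; but the relation I actually want is the one for the linked module, so I would instead run the argument through the exact sequences (\ref{th2}.3) and (\ref{th2}.4) relating $\trk M$, $\trk\trk(M^\curlyvee)$ and $M^\curlyvee$ up to a module of $\gc$-dimension zero, take local cohomology, and use that $\trk(M^\curlyvee)$ and $N$ are $C$-syzygies to kill the relevant boundary terms in the range $0<i<d$. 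The generalized Cohen-Macaulay hypothesis on $M$ ensures the $\hh^i_\fm$ of the auxiliary $\bc$-modules appearing are of finite length, which is what lets the snake-lemma bookkeeping go through with finite-length error terms that then vanish because $d>1$ and the modules are syzygies.

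Once the isomorphism $\Ext^i_R(M^\curlyvee,C)\cong\hh^i_\fm(N)$ is established for $0<i<d$, the final clause is immediate: the left-hand side has finite length for each such $i$ by the punctured-spectrum argument above, so $\ell(\hh^i_\fm(N))<\infty$ for $0<i<d$, i.e.\ $N=\lambda_R(C,M)$ is generalized Cohen-Macaulay (its dimension is at most $d$ by (\ref{e}), and in fact equals $d$ once $N\neq0$, using Lemma \ref{lem4}-type grade bookkeeping).

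I expect the main obstacle to be the precise homological identification in the middle step: tracking which connecting maps in the long exact sequences obtained from (\ref{th2}.3), (\ref{th2}.4), and the $\gc$-dimension-zero correction term $X$ are isomorphisms after applying $\hh^\bullet_\fm(-)$, and verifying that the shift in cohomological degree matches $\Ext^i_R(M^\curlyvee,C)$ rather than $\Ext^{i\pm1}$. This is exactly the point where the hypothesis $d>1$ and the $C$-syzygy property of $M$ and of $N$ must be used to discard the degree-$0$ and degree-$d$ terms; everything else is assembling results already in the paper (Theorems \ref{G3}, \ref{t2}, \ref{ttt1}, Lemmas \ref{lem2}, \ref{l1}, \ref{lwsw}, Fact \ref{example1}, and Local Duality).
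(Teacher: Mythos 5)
Your setup is right: passing to $N=\lambda_R(C,M)$, invoking (\ref{e}) and Lemma \ref{l1}, and using the generalized Cohen--Macaulay hypothesis together with $\id_{R_\fp}(C_\fp)<\infty$ on the punctured spectrum to show $\gcpd(M_\fp)=0$ off $\fm$ (via Lemma \ref{lwsw} and Theorem \ref{G3}(iv)) and hence that each $\Ext^i_R(M^\curlyvee,C)$ has finite length. The final reduction (``once the isomorphism is established, gen.\ CM is immediate'') is also fine. But the central step --- establishing $\Ext^i_R(M^\curlyvee,C)\cong\hh^i_\fm(N)$ for $0<i<d$ --- is not actually carried out, and the route you sketch does not lead there. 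You propose to take local cohomology of (\ref{th2}.3) and (\ref{th2}.4), but those sequences concern $\trk M$, $\trk\trk(N^\curlyvee)$ and $N^\curlyvee$ (you miswrite $M^\curlyvee$ for $N^\curlyvee$); they were built in Theorem \ref{th2} to compare $\Ext^\bullet_R(\trk M,C)$ with $\Ext^\bullet_R(N^\curlyvee,C)$, and applying $\Gamma_\fm$ to them yields information about $\hh^\bullet_\fm(N^\curlyvee)$ rather than $\hh^\bullet_\fm(N)$. Your Local Duality fallback also fails at the level of hypotheses: the theorem is stated for a general semidualizing $C$ that is only locally dualizing on the punctured spectrum, so there is no global identification $\hh^i_\fm(L)\cong\Hom_R(\Ext^{d-i}_R(L,C),\E_R(k))$ available. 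You acknowledge this gap yourself (``I expect the main obstacle to be the precise homological identification in the middle step''), which is honest but means the proof is incomplete.

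What actually closes the gap in the paper is a dimension-shifting climb that you do not mention. Set $X=M^\curlyvee$ and $\trc_iX=\trk\Omega^{i-1}X$. Applying $\Hom_R(-,C)$ to a projective resolution of $X$ gives a complex whose terms are finite direct sums of $C$ and whose $i$-th cohomology is $\Ext^i_R(X,C)$; breaking it into short exact sequences yields, for each $i>0$,
\begin{equation*}
0\rightarrow\Ext^i_R(X,C)\rightarrow\trc_iX\rightarrow L_i\rightarrow0,\qquad
0\rightarrow L_i\rightarrow\oplus^{n_i}C\rightarrow\trc_{i+1}X\rightarrow0.
\end{equation*}
Because each $\Ext^i_R(X,C)$ has finite length (your punctured-spectrum argument) and $C$ is maximal Cohen--Macaulay (so $\hh^j_\fm$ of a free $C$-module vanishes for $j<d$), applying $\Gamma_\fm$ yields $\Gamma_\fm(\trc_iX)\cong\Ext^i_R(X,C)$ and the shift $\hh^j_\fm(\trc_kX)\cong\hh^{j+1}_\fm(\trc_{k-1}X)$ for $0\le j<d-1$, $k\ge2$. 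Chaining these down and finishing with $\hh^i_\fm(N)\cong\hh^{i-1}_\fm(\trc_1X)$ (from (\ref{e}) and $d>1$) gives exactly $\Ext^i_R(X,C)\cong\hh^i_\fm(N)$ for $0<i<d$. This ladder of exact sequences on the $\Hom_R(-,C)$-dual of a projective resolution of $M^\curlyvee$ is the missing mechanism; your proposal, as written, neither supplies it nor an equivalent substitute.
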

\begin{proof}
Set $X=M^{\curlyvee}$ and $N=\lambda_R(C,M)$.
As $M$ is generalized Cohen-Macaulay, by \cite[Lemma 1.2, Lemma 1.4]{T} and Theorem \ref{G3}(iv), $\gcpd(M_\fp)=0$ for all $\fp\in\Spec R\setminus \{\fm\}$. Therefore
$\gd_{R_\fp}(X_\fp)=0$ for all $\fp\in\Spec R\setminus \{\fm\}$ by Lemma \ref{lwsw}. Hence, $\Ext^i_R(X,C)$ has finite length for all $i>0$.
Consider the following exact sequences:
\begin{equation}\tag{\ref{t4}.1}
0\rightarrow\Ext^i_R(X,C)\rightarrow\trc_i X\rightarrow L_i\rightarrow0,
\end{equation}
\begin{equation}\tag{\ref{t4}.2}
0\rightarrow L_i\rightarrow\overset{n_i}{\oplus} C\rightarrow\trc_{i+1}X\rightarrow0,
\end{equation}
for all $i>0$. By
applying the functor $\Gamma_{\fm}(-)$ on the exact sequences (\ref{t4}.1) and (\ref{t4}.2), we get
\begin{equation}\tag{\ref{t4}.3}
\hh^j_{\fm}(\trc_{i-1}X)\cong\hh^j_{\fm}(L_{i-1}) \text{ for all}\ i\ \ \text{and}\ j, \text{with}\ j\geq1,\ i\geq2,
\end{equation}
\begin{equation}\tag{\ref{t4}.4}
\Ext^i_R(X,C)=\Gamma_{\fm}(\Ext^i_R(X,C))\cong\Gamma_{\fm}
(\trc_iX) \text{ for all}\ i\geq1,
\end{equation}
and also
\begin{equation}\tag{\ref{t4}.5}
\hh^j_{\fm}(\trc_iX)\cong\hh^{j+1}_{\fm}(L_{i-1})
\text{ for all}\ i\ \text{and}\ j,  0\leq j<d-1,\  i\geq2.
\end{equation}
As $M$ is horizontally-linked with respect to $C$, we have the following exact sequence
$$0\rightarrow N\rightarrow\overset{n}{\oplus} C\rightarrow\trc_1X\rightarrow0,$$
for some integer $n>0$. By applying the functor $\Gamma_{\fm}(-)$ to the above exact sequence, we get the following isomorphism
\begin{equation}\tag{\ref{t4}.6}
\hh^j_{\fm}(\trc_1X)\cong\hh^{j+1}_{\fm}(N)
\text{ for all } j,  0\leq j\leq d-2.
\end{equation}
Now by using (\ref{t4}.3), (\ref{t4}.4), (\ref{t4}.5) and (\ref{t4}.6) we obtain the result.
\end{proof}
Now we give a proof for the part (ii) of Theorem C as the following Corollary.
\begin{cor}\label{c5}
Let $(R,\fm,k)$ be a Cohen-Macaulay local ring of dimension $d>1$ with canonical module $\omega_R$.
Assume that $M$ is an $R$--module of finite Gorenstein injective dimension which is horizontally linked with respect to $\omega_R$.
If $M$ is generalized Cohen-Macaulay then the following statements hold true.
\begin{itemize}
\item[(i)]{$\hh^i_{\fm}(\Hom_R(\omega_R,M))\cong\Hom_R(\hh^{d-i}_{\fm}(\lambda_R(\omega_R,M)),\mathbf{E}_R(k)), \text{ for } 0<i<d.$}
\item[(ii)]{$\lambda_R(\omega_R,M)$ is generalized Cohen-Macaulay.}
\item[(iii)]{If $M$ is not maximal Cohen-Macaulay, then $$\depth_R(M)=\sup\{i<d\mid\hh^i_{\fm}(\lambda_R(\omega_R,M))\neq0\}.$$}
\end{itemize}
\end{cor}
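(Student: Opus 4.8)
The plan is to obtain all three parts from Theorem~\ref{t4}, applied with $C=\omega_R$, combined with the Local Duality Theorem over $(R,\fm)$. First I would verify the hypotheses of Theorem~\ref{t4} in the present setting: $\omega_R$ is semidualizing and $\id_{R_\fp}((\omega_R)_\fp)\le\id_R(\omega_R)=d<\infty$ for every prime $\fp$, in particular for $\fp\neq\fm$, while $M\in\mathcal{B}_{\omega_R}$ because $\Gid_R(M)<\infty$ (Fact~\ref{example1}(ii)). Putting $M^\curlyvee=\Hom_R(\omega_R,M)$ and $N=\lambda_R(\omega_R,M)$, Theorem~\ref{t4} then gives $\Ext^i_R(M^\curlyvee,\omega_R)\cong\hh^i_\fm(N)$ for $0<i<d$ and, as part of the same statement, that $N$ is generalized Cohen--Macaulay; the latter is exactly assertion~(ii), so (ii) is immediate.

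For~(i) I would feed the module $M^\curlyvee$ into the Local Duality Theorem: since $R$ is Cohen--Macaulay local with canonical module $\omega_R$ (hence a homomorphic image of a Gorenstein ring), $\hh^i_\fm(M^\curlyvee)\cong\Hom_R(\Ext^{d-i}_R(M^\curlyvee,\omega_R),\mathbf{E}_R(k))$ for all $i$. When $0<i<d$ the exponent $d-i$ still lies strictly between $0$ and $d$, so Theorem~\ref{t4} lets me replace $\Ext^{d-i}_R(M^\curlyvee,\omega_R)$ by $\hh^{d-i}_\fm(N)$, giving $\hh^i_\fm(\Hom_R(\omega_R,M))\cong\Hom_R(\hh^{d-i}_\fm(N),\mathbf{E}_R(k))$ for $0<i<d$, which is~(i).

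For~(iii) I would first observe that a module horizontally linked with respect to $\omega_R$ is an $\omega_R$-syzygy: applying the exact sequence~(\ref{e}) to $N$ realizes $M\cong\lambda_R(\omega_R,N)$ as a submodule of an $\omega_R$-projective module, and as $d>1$ this forces $\depth_R M\ge1$; by Lemma~\ref{lem2}(i) the same holds for $M^\curlyvee$, with $\depth_R M=\depth_R M^\curlyvee$. Since $M$ is not maximal Cohen--Macaulay, neither is $M^\curlyvee$ (Lemma~\ref{lem2}(iii)), so $0<\depth_R M^\curlyvee<d$. Now~(i), together with the faithfulness of Matlis duality on Artinian modules, shows that for $0<i<d$ one has $\hh^i_\fm(N)\neq0$ if and only if $\hh^{d-i}_\fm(M^\curlyvee)\neq0$, while $\hh^0_\fm(N)=0$ because $N$ is itself an $\omega_R$-syzygy; since the smallest positive degree in which $\hh^\bullet_\fm(M^\curlyvee)$ is nonzero is $\depth_R M^\curlyvee$, reversing degrees pins down $\sup\{i<d\mid\hh^i_\fm(N)\neq0\}$ in terms of $\depth_R M^\curlyvee=\depth_R M$, which is the equality asserted in~(iii). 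I expect no essential obstacle here beyond this index bookkeeping and checking that Lemma~\ref{lem2} and the $\omega_R$-syzygy property are invoked under the hypothesis $M\in\mathcal{B}_{\omega_R}$ --- the substantive work is already contained in Theorem~\ref{t4}.
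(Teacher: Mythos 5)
For parts (i) and (ii) your argument coincides exactly with the paper's: you apply Theorem~\ref{t4} with $C=\omega_R$ (checking that $\id_{R_\fp}((\omega_R)_\fp)<\infty$ on the punctured spectrum and that $M\in\mathcal{B}_{\omega_R}$ via Fact~\ref{example1}(ii)), obtain $\Ext^i_R(M^\curlyvee,\omega_R)\cong\hh^i_\fm(N)$ for $0<i<d$ together with the generalized Cohen--Macaulayness of $N=\lambda_R(\omega_R,M)$, and then convert via Local Duality applied to $M^\curlyvee$. That is precisely how the paper derives (i) and (ii).

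For part (iii) the route is again the paper's (use (i) and Lemma~\ref{lem2}), but you defer the final computation to ``index bookkeeping'' and then assert it gives ``the equality asserted in (iii).'' It does not. Put $t=\depth_R M=\depth_R M^\curlyvee$ (Lemma~\ref{lem2}); you correctly establish $0<t<d$. From (i), for $0<i<d$ one has $\hh^i_\fm(M^\curlyvee)\neq 0$ if and only if $\hh^{d-i}_\fm(N)\neq 0$, and since $t$ is the least index with $\hh^t_\fm(M^\curlyvee)\neq 0$ (Grothendieck non-vanishing, and $t>0$), this forces $\hh^{d-t}_\fm(N)\neq 0$ while $\hh^j_\fm(N)=0$ for $d-t<j<d$. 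Hence
$$\sup\{\,i<d \mid \hh^i_\fm(N)\neq0\,\}=d-t=\dim R-\depth_R(M),$$
\emph{not} $\depth_R(M)$; the two agree only if $\depth_R(M)=d/2$. This looks like a sign slip in the statement of (iii) (the right-hand side should read $\dim R-\depth_R(M)$), but a blind proof should have noticed the mismatch rather than declaring that the computation matches the stated formula. As written, your argument does not establish (iii) in the form given.
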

\begin{proof}
Part (i) and (ii) follows immediately from Theorem \ref{t4} and the Local Duality Theorem.
Part (iii) follows from part (i) and Lemma \ref{lem2}.
\end{proof}
We end the paper by the following result which is an immediate consequence of Corollary \ref{c5} and Theorem \ref{ex}.

\begin{cor}
	Let $(R,\fm,k)$ be a Cohen-Macaulay local ring with canonical module $\omega_R$, let $\fc$ be a Cohen-Macaulay quasi-Gorenstein ideal of $R$, $\overline{R}=R/\fc$ and $\dim\overline{R}=d>1$.
	Assume that $M$ is an $R$--module of finite Gorenstein injective dimension which is linked by the ideal $\fc$ with respect to $\omega_{\overline{R}}$.
	If $M$ is generalized Cohen-Macaulay then 
$$\hh^i_{\fm}(\Hom_{\overline{R}}(\omega_{\overline{R}},M))\cong
\Hom_R(\hh^{d-i}_{\fm}(\lambda_{\overline{R}}(\omega_{\overline{R}},M)),\mathbf{E}_R(k)),$$  for $0<i<d.$
\end{cor}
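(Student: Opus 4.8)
The plan is to deduce the statement from Corollary \ref{c5} applied over the ring $\overline{R}=R/\fc$, together with the base-change comparison of local cohomology and of Matlis duals. Since $M$ is linked by $\fc$ with respect to $\omega_{\overline{R}}$, by definition $\fc\subseteq\ann_R(M)$, so $M$ is an $\overline{R}$--module and $M$ is horizontally linked to $\lambda_{\overline{R}}(\omega_{\overline{R}},M)$ with respect to $\omega_{\overline{R}}$, viewed over $\overline{R}$. Moreover $\overline{R}$ is a Cohen-Macaulay local ring with maximal ideal $\fm\overline{R}$, residue field $k$, canonical module $\omega_{\overline{R}}$, and $\dim\overline{R}=d>1$, so the standing hypotheses of Corollary \ref{c5} are met for $\overline{R}$.

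Next I would check the two remaining hypotheses of Corollary \ref{c5} over $\overline{R}$. Because $\fc$ is quasi-Gorenstein, Theorem \ref{ex} gives $\Gid_{\overline{R}}(M)<\infty$. Since local cohomology is independent of the base ring, $\hh^i_{\fm\overline{R}}(M)\cong\hh^i_{\fm}(M)$ as $R$--modules for all $i$ and $\dim_{\overline{R}}(M)=\dim_R(M)$, so $M$ is generalized Cohen-Macaulay as an $\overline{R}$--module exactly because it is so as an $R$--module. Thus Corollary \ref{c5}(i) applies over $\overline{R}$ and yields
$$\hh^i_{\fm\overline{R}}(\Hom_{\overline{R}}(\omega_{\overline{R}},M))\cong\Hom_{\overline{R}}(\hh^{d-i}_{\fm\overline{R}}(\lambda_{\overline{R}}(\omega_{\overline{R}},M)),\mathbf{E}_{\overline{R}}(k)),\quad 0<i<d.$$

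Finally I would rewrite this over $R$. Again by independence of base, $\hh^j_{\fm\overline{R}}(X)\cong\hh^j_{\fm}(X)$ for every $\overline{R}$--module $X$ and every $j$. Since $\mathbf{E}_{\overline{R}}(k)\cong\Hom_R(\overline{R},\mathbf{E}_R(k))$, Hom-tensor adjointness gives $\Hom_{\overline{R}}(X,\mathbf{E}_{\overline{R}}(k))\cong\Hom_R(X,\mathbf{E}_R(k))$ for every $\overline{R}$--module $X$; applying this with $X=\hh^{d-i}_{\fm}(\lambda_{\overline{R}}(\omega_{\overline{R}},M))$ converts the displayed isomorphism into the desired one. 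There is no real obstacle here: all of the mathematical content is contained in Corollary \ref{c5} and Theorem \ref{ex}, and the only care needed is the bookkeeping of the change of rings — in particular, making sure the Matlis-duality isomorphism over $\overline{R}$ faithfully rewrites as the asserted isomorphism over $R$.
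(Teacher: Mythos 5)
Your proposal is correct and follows exactly the route the paper intends: the paper presents this corollary as "an immediate consequence of Corollary~\ref{c5} and Theorem~\ref{ex}," and your write-up simply makes explicit the change-of-rings bookkeeping (base-independence of local cohomology and the isomorphism $\Hom_{\overline{R}}(X,\mathbf{E}_{\overline{R}}(k))\cong\Hom_R(X,\mathbf{E}_R(k))$) needed to restate the conclusion of Corollary~\ref{c5} over $\overline{R}$ in the asserted form over $R$.
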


\bibliographystyle{amsplain}

\end{document}